\newcommand{\periodafter}[1]{\ifstrempty{#1}{}{#1.}}
\titleformat{\section}[block]{\scshape\filcenter\LARGE}{\thesection.}{.5em}{}
\titleformat{\subsection}[block]{\bfseries\filcenter\large}{\thesubsection.}{.5em}{\medskip}
\titleformat{\subsubsection}[runin]{\bfseries}{\thesubsubsection.}{.5em}{\periodafter}
\titlespacing{\subsubsection}{0pt}{\topsep}{.5em}
\titleformat{\section}[block]{\scshape\filcenter\Large}{\thesection.}{.5em}{}
\titleformat{\subsection}[runin]{\bfseries}{\thesubsection.}{.5em}{}[.]
\titleformat{\subsubsection}[runin]{\bfseries}{\thesubsubsection.}{.5em}{}[.]
\titlespacing{\subsubsection}{0pt}{10pt}{.5em}
\theoremstyle{ntheorem}
  	\newtheorem{theorem}[subsubsection]{Theorem}
  	\newtheorem{proposition}[subsubsection]{Proposition}
	\newtheorem{lemma}[subsubsection]{Lemma}
  	\newtheorem{corollary}[subsubsection]{Corollary}
\theoremstyle{definition}
\def\@equationname{equation}
\def\hhmm{\number\hh:\ifnum\mm<10{}0\fi\number\mm}
	\edef\Drop@@{%
		\dimen@=#1\relax
		\dimen@=.5\dimen@
		\A@=-\sinDirection\dimen@
		\B@=\cosDirection\dimen@
		\setboxz@h{%
			\setbox2=\hbox{\kern3\A@\raise3\B@\copy\z@}%
			\dp2=\z@ \ht2=\z@ \wd2=\z@ \box2
			\setbox2=\hbox{\kern\A@\raise\B@\copy\z@}%
			\dp2=\z@ \ht2=\z@ \wd2=\z@ \box2
			\setbox2=\hbox{\kern-\A@\raise-\B@\copy\z@}%
			\dp2=\z@ \ht2=\z@ \wd2=\z@ \box2
			\setbox2=\hbox{\kern-3\A@\raise-3\B@ \noexpand\boxz@}%
			\dp2=\z@ \ht2=\z@ \wd2=\z@ \box2
		}%
		\ht\z@=\z@ \dp\z@=\z@ \wd\z@=\z@ \noexpand\styledboxz@
	}%
\xydef@\Tttip@{\kern2pt \vrule height2pt depth2pt width\z@
	\Tttip@@ \kern2pt \egroup
	\U@c=0pt \D@c=0pt \L@c=0pt \R@c=0pt \Edge@c={\circleEdge}%
	\def\Leftness@{.5}\def\Upness@{.5}%
	\def\Drop@@{\styledboxz@}\def\Connect@@{\straight@{\dottedSpread@\jot}}}
\xydef@\Tttip@@{%
	\dimen@=.25\dimen@
 	\B@=\cosDirection\dimen@
	\setboxz@h\bgroup\reverseDirection@\line@ \wdz@=\z@ \ht\z@=\z@ \dp\z@=\z@
	{\vDirection@(1,-1)\xydashl@ \xyatipfont\char\DirectionChar}%
	{\vDirection@(1,+1)\xydashl@ \xybtipfont\char\DirectionChar}%
}
\xydef@\ar@form{
	\ifx \space@\next \expandafter\DN@\space{\xyFN@\ar@form}%
	\else\ifx ^\next \DN@ ^{\xyFN@\ar@style}\edef\arvariant@@{\string^}%
	\else\ifx _\next \DN@ _{\xyFN@\ar@style}\edef\arvariant@@{\string_}%
	\else\ifx 0\next \DN@ 0{\xyFN@\ar@style}\def\arvariant@@{0}%
	\else\ifx 1\next \DN@ 1{\xyFN@\ar@style}\def\arvariant@@{1}%
	\else\ifx 2\next \DN@ 2{\xyFN@\ar@style}\def\arvariant@@{2}%
	\else\ifx 3\next \DN@ 3{\xyFN@\ar@style}\def\arvariant@@{3}%
	\else\ifx 4\next \DN@ 4{\xyFN@\ar@style}\def\arvariant@@{4}%
	\else\ifx \bgroup\next \let\next@=\ar@style
	\else\ifx [\next \DN@[##1]{\ar@modifiers{[##1]}}
	\else\ifx *\next \DN@ *{\ar@modifiers}%
	\else\addLT@\ifx\next \let\next@=\ar@slide
	\else\ifx /\next \let\next@=\ar@curveslash
	\else\ifx (\next \let\next@=\ar@curveinout 
	\else\addRQ@\ifx\next \addRQ@\DN@{\ar@curve@}%
	\else\addLQ@\ifx\next \addLQ@\DN@{\xyFN@\ar@curve}%
	\else\addDASH@\ifx\next \addDASH@\DN@{\defarstem@-\xyFN@\ar@}%
	\else\addEQ@\ifx\next \addEQ@\DN@{\def\arvariant@@{2}\defarstem@-\xyFN@\ar@}%
	\else\addDOT@\ifx\next \addDOT@\DN@{\defarstem@.\xyFN@\ar@}%
	\else\ifx :\next \DN@:{\def\arvariant@@{2}\defarstem@.\xyFN@\ar@}%
	\else\ifx ~\next \DN@~{\defarstem@~\xyFN@\ar@}%
	\else\ifx !\next \DN@!{\dasharstem@\xyFN@\ar@}%
	\else\ifx ?\next \DN@?{\ar@upsidedown\xyFN@\ar@}%
	\else \let\next@=\ar@error
	\fi\fi\fi\fi\fi\fi\fi\fi\fi\fi\fi\fi\fi\fi\fi\fi\fi\fi\fi\fi\fi\fi\fi \next@}
\newcommand{\fl}{\rightarrow}
\newcommand{\qfl}{\xymatrix@1@C=10pt{\ar@4 [r] &}}
\renewcommand{\tilde}[1]{\widetilde{#1}}
\DeclareMathOperator{\U}{\mathcal{U}}
\DeclareMathOperator{\Si}{\mathcal{S}}
\DeclareMathOperator{\Grw}{Grw}
\DeclareMathOperator{\Rec}{Rect}
\DeclareMathOperator{\YoungT}{Yt}
\DeclareMathOperator{\SkewT}{St}
\renewcommand{\phi}{\varphi}
\renewcommand{\epsilon}{\varepsilon}
\newcommand{\Zb}{\mathbb{Z}}
\renewcommand{\Pr}{\EuScript{P}}
\newcommand{\Yr}{\EuScript{Y}}
\newcommand{\M}{\mathbf{M}}
\renewcommand{\leq}{\leqslant}
\renewcommand{\geq}{\geqslant}
\DeclareMathOperator{\rem}{Rem}
\newcommand{\insl}[1]{\rightsquigarrow_{#1}}
\newcommand{\insr}[1]{\;\raisebox{0.1em}{{\rotatebox[origin=c]{180}{$\rightsquigarrow$}}}_{#1}\;}
\definecolor{cyan}{RGB}{175,238,238} 
\definecolor{Red}{rgb}{0.96,0.17,0.20}
\definecolor{RedD}{rgb}{0.57, 0.0, 0.04}
\definecolor{Green}{rgb}{0.0,1.0,0.0}
\definecolor{GreenL}{rgb}{0.0,1.0,0.0} 
\definecolor{GreenD}{rgb}{0.64,0.76,0.68} 
\definecolor{Yellow}{rgb}{1.0,1.0,0.19}
\definecolor{YellowD}{rgb}{0.93, 0.84, 0.25}
\definecolor{Blue}{rgb}{0.0,1.0,1.0}
\definecolor{BlueD}{rgb}{0.63,0.79,0.95}
\definecolor{vert}{rgb}{0,0.45,0}
\definecolor{bazaar}{rgb}{0.6, 0.47, 0.48}
\definecolor{bronze}{rgb}{0.8, 0.5, 0.2}
\definecolor{darkspringgreen}{rgb}{0.09, 0.45, 0.27}
\def\Pr{\mathcal{P}}
\def\Yr{\mathcal{Y}}
\def\blfootnote{\xdef\@thefnmark{}\@footnotetext}
\def\P{\mathbf{P}}
\def\T{\mathbf{T}}
\definecolor{Red}{rgb}{0.96,0.17,0.20}
\definecolor{RedD}{rgb}{0.57, 0.0, 0.04}
\definecolor{Green}{rgb}{0.0,1.0,0.0}
\definecolor{GreenL}{rgb}{0.0,1.0,0.0} 
\definecolor{GreenD}{rgb}{0.64,0.76,0.68} 
\definecolor{Yellow}{rgb}{1.0,1.0,0.19}
\definecolor{YellowD}{rgb}{0.93, 0.84, 0.25}
\definecolor{Blue}{rgb}{0.0,1.0,1.0}
\definecolor{BlueD}{rgb}{0.63,0.79,0.95}
\definecolor{vert}{rgb}{0,0.45,0}
\definecolor{bazaar}{rgb}{0.6, 0.47, 0.48}
\definecolor{bronze}{rgb}{0.8, 0.5, 0.2}
\definecolor{darkspringgreen}{rgb}{0.09, 0.45, 0.27}
\begin{document}
\thispagestyle{empty}

\begin{center}

\begin{doublespace}
\begin{huge}
{\scshape  Super jeu de taquin and combinatorics }

{\scshape  of super tableaux of type A}
\end{huge}

\bigskip
\hrule height 1.5pt 
\bigskip

\begin{Large}
{\scshape Nohra Hage }
\end{Large}
\end{doublespace}

\bigskip

\begin{small}\begin{minipage}{14cm}
\noindent\textbf{Abstract -- } 
This paper presents a combinatorial study of the super plactic monoid of type A, which is related to the representations of the general linear Lie superalgebra.  We introduce the analogue of the Sch\"{u}tzenberger's jeu de taquin on the structure of super tableaux over a signed alphabet. We show that this procedure which transforms super skew tableaux into super Young tableaux is compatible with the super plactic congruence and it is confluent. We deduce properties relating the super jeu de taquin to insertion algorithms on super tableaux. Moreover, we introduce  the super evacuation procedure as an involution on super tableaux and we show its compatibility with the super plactic congruence.  
Finally, we describe the super jeu de taquin in terms of Fomin's growth diagrams in order to give a combinatorial version of the super Littlewood--Richardson rule. 

\medskip

\smallskip\noindent\textbf{Keywords --}  super jeu de taquin, super Young tableaux,  super plactic monoids,   super Littlewood--Richardson rule.

\medskip

\smallskip\noindent\textbf{M.S.C. 2010 -- Primary:} 05E99. \textbf{Secondary:} 20M05, 05A99.
\end{minipage}\end{small}

\tikzset{every tree node/.style={minimum width=1em,draw,circle},
         blank/.style={draw=none},
         edge from parent/.style=
         {draw,edge from parent path={(\tikzparentnode) -- (\tikzchildnode)}},
         level distance=0.8cm}

\vspace{1cm}

\begin{small}\begin{minipage}{12cm}
\renewcommand{\contentsname}{}
\setcounter{tocdepth}{1}
\tableofcontents
\end{minipage}
\end{small}
\end{center}

\tikzset{every tree node/.style={minimum width=1em,draw,circle},
         blank/.style={draw=none},
         edge from parent/.style=
         {draw,edge from parent path={(\tikzparentnode) -- (\tikzchildnode)}},
         level distance=0.8cm}
 \bigskip

\section{Introduction}
         
The structure of plactic monoids has its origins in the works of Schensted~\cite{Schensted61} and Knuth~\cite{Knuth70} in the combinatorial study of Young tableaux~\cite{Young28}, and it has became a ubiquitous tool in algebraic combinatorics, representation theory and probabilistic combinatorics,~\cite{Lothaire02,Fulton97,OConnell03}. Schensted introduced an insertion algorithm on Young tableaux in order to compute the length of  the longest decreasing and increasing subsequences of a given word over the totally ordered alphabet~\mbox{$[n]:=\{1<\ldots<n\}$.} Knuth showed that two words over~$[n]$ have the same Young tableau after applying Schensted's insertion if, and only, if they are related by a sequence of the following \emph{Knuth relations}:
 \begin{equation}
\label{E:KnuthRelations}
zxy = xzy \; \text{ for }\; 1\leq x\leq y < z \leq n\quad \text{ and }\quad   yzx = yxz \; \text{ for }\; 1\leq x<y\leq z\leq n.
\end{equation}
Lascoux and Sch\"{u}tzenberger introduced in~\cite{LascouxSchutsenberger81} the plactic monoid as the quotient of the free monoid over~$[n]$ by the congruence generated by the family of Knuth relations~\eqref{E:KnuthRelations}, and they investigated its algebraic and combinatorial properties. 
Using Kashiwara's theory of crystal bases,  the plactic  monoid is also related to the representations of the special linear Lie algebra~$\mathfrak{s}\mathfrak{l}_n$ of type~$A_{n+1}$ and it is known as the plactic monoid of type~A,~\cite{LascouxLeclercThibon95}. In this way, the \emph{plactic monoid} of rank~$n$, denoted by~$\P([n])$, is the quotient of the free monoid over~$[n]$ by the congruence relation~$\sim_{\P([n])}$ defined in the following equivalent three ways:
\begin{enumerate}[\bf D1)] 
\item \emph{Insertion and tableaux}:~$\sim_{\P([n])}$ relates those words that yield the same Young tableau as the result of Schensted's insertion algorithm.
\item \emph{Defining relations}:~$\sim_{\P([n])}$ is defined to be the congruence generated by the family of Knuth relations~\eqref{E:KnuthRelations}.
\item \emph{Crystal}:~$\sim_{\P([n])}$ is defined by~$u\sim_{\P([n])} v$ if and only if there is a \emph{crystal isomorphism} between connected components of the crystal graph of the vector representation of~$\mathfrak{s}\mathfrak{l}_n$, that maps~$u$ to~$v$.
\end{enumerate}
Plactic monoids are also defined for other finite-dimensional semisimple Lie algebras of classical types and exceptional ones using Kashiwara's theory of crystal bases,~\cite{Lecouvey07}, or Littelmann's  path model,~\cite{Littelmann96}.
Sch\"{u}tzenberger introduced in~\cite{Schutzenberger77} the \emph{jeu de taquin} procedure on the structure of tableaux in order to give one of the first correct proofs of the \emph{Littlewood--Richardson rule} using properties of the plactic monoid. This rule describes in a combinatorial way the multiplicity of a \emph{Schur polynomial} in a product of Schur polynomials, that is, the multiplicity of an irreducible representation of the general Lie algebra
in a tensor product of two irreducible representations.
The jeu de taquin has found many applications in algebraic combinatorics and probabilistic combinatorics,~\cite{Fulton97,Leeuwen01, RomikSniady15}.
Recently, plactic monoids and other similar monoids are studied by rewriting methods,~\cite{CainGrayMalheiro14, CainGrayMalheiro15b, Hage15, HageMalbos17, HageMalbos22}.
Note finally that structural properties for plactic algebras are obtained in~\cite{CedoOkninski04}, and tropical representations for finite-rank plactic monoids are constructed in~\cite{JohnsonKambites21}, which implies that every plactic monoid of finite rank satisfies a non-trivial identity.

In this paper, we study the super version of the plactic monoid of type~A over a signed alphabet. A \emph{signed alphabet} is a finite or countable totally ordered set~$\Si$ which is the disjoint union of two subsets~$\Si_0$ and~$\Si_1$. 
Suppose~$\Si_0<\Si_1$, that is every element of~$\Si_0$ is less than every element of~$\Si_1$. In this case, the \emph{super plactic monoid} over a signed alphabet~$\Si$, denoted by~$\P(\Si)$, is the quotient of the free monoid~$\Si^\ast$ over~$\Si$ by the congruence relation~$\sim_{\P(\Si)}$ defined in the following equivalent three ways:
\begin{enumerate}[\bf D1)] 
\item \emph{Insertion and tableaux}:~$\sim_{\P(\Si)}$ relates those words that yield the same super  tableau as the result of the Schensted-like insertion algorithm introduced in~\cite{LaScalaNardozzaSenato06}.
\item \emph{Defining relations}:~$\sim_{\P(\Si)}$  is defined to be the congruence generated by the following family of  \emph{super  Knuth-like relations},~\cite{BenkartKangKashiwara20, LaScalaNardozzaSenato06}:
\begin{equation}
\label{Eq:SuperKnuthRelations}
\begin{array}{rl}
xzy=zxy,\;\text{ with }\; x=y \;\text{ only if }\; y\in\Si_0 \;\text{ and } \; y=z\; \text{ only if }\; y \in \Si_1,\\
yxz=yzx,\;\text{ with }\; x=y \;\text{ only if }\;  y \in \Si_1\;\text{ and } \; y=z\; \text{ only if }\; y \in \Si_0,
\end{array}
\end{equation}
for any~$x\leq y\leq z$ of elements of~$\Si$. 
\item \emph{Crystal}:~$\sim_{\P(\Si)}$ is defined by~$u\sim_{\P(\Si)} v$ if and only if there is a crystal isomorphism between connected components of the crystal graph of the vector representation of the general linear Lie superalgebra~$\mathfrak{g}\mathfrak{l}_{m,n}$, that maps~$u$ to~$v$,~\cite{BenkartKangKashiwara20}.
\end{enumerate}
It is worth noting that the relations~\eqref{Eq:SuperKnuthRelations} are the reverse of the ones presented in~\cite{BenkartKangKashiwara20}, because the reading maps in~\cite{BenkartKangKashiwara20} read super tableaux column-wise or row-rise from right to left and from top to bottom whereas the reading maps used in this paper read super tableaux in the reverse direction. We will follow the definition of the super plactic monoid introduced in~\cite{LaScalaNardozzaSenato06} that does not take into account the condition~$\Si_0<\Si_1$, and we will show all the results for any
ordering of~$\Si$. 
Note that when $\Si=\Si_0=[n]$, we recover the definition of the plactic monoid of type~A. 
Note also that the super plactic monoid appeared in~\cite{LodayPovov08} as a deformation of the parastatistics algebra which is is a superalgebra with even parafermi and odd parabose creation and annihilation operators. 
Moreover, super algebraic structures have found many applications as combinatorial tools in the study of  the invariant theory of superalgebras, the representation theory of general Lie super algebras, and algebras satisfying identities,~\cite{BereleRemmel85, BereleRegev87, BonettiSenatoVenezia98,GrosshansRotaStein87}.

Let~$\Si$ be a signed alphabet and let~$\lambda=(\lambda_1,\ldots,\lambda_k)\in\mathbb{N}^k$ be a \emph{partition} of a positive integer~$n$, that is a weakly decreasing sequence such that~$ \sum\lambda_i = n$.
A \emph{super tableau} of \emph{shape}~$\lambda$ over~$\Si$  is a collection of boxes in a left-justified row where the $i$-th row contains $\lambda_i$ boxes, for $1\leq i\leq k$, and filled by elements of~$\Si$ such that the entries in each row are weakly increasing from left to right allowing the repetition only of elements in~$\Si_0$ and the entries in each column are weakly increasing from top to bottom allowing the repetition only  of elements in~$\Si_1$.  We will denote by~$\YoungT(\Si)$ the set of all super tableaux over~$\Si$, and by~$R_{row}:\YoungT(\Si)\to \Si^\ast$
the map that reads the entries of a super tableau row-wise from bottom to top and from left to right. 
Consider two partitions~$\lambda$ and~$\mu$ such that~$\mu_i\leq \lambda_i$, for any~$i$.
A \emph{super skew tableau} of \emph{shape}~$\lambda/\mu$  over~$\Si$ is  obtained by removing  the super tableau of shape~$\mu$ from the super  tableau of shape~$\lambda$, and filled by elements of~$\Si$ such that the entries in each row are weakly increasing from left to right  with respect to~$\Si_0$ and the entries in each column are weakly increasing from top to bottom with respect to~$\Si_1$.
Schensted-like left and right insertion algorithms, denoted respectively by~$\insl{}$ and~$\insr{}$, are  introduced in~\cite{LaScalaNardozzaSenato06}, and consist in inserting elements of~$\Si$ into super  tableaux  by rows and columns respectively.  
The \emph{cross-section property} of super tableaux with respect to the \emph{super plactic congruence} generated by the relations~\eqref{Eq:SuperKnuthRelations},  is proved in~\cite{LaScalaNardozzaSenato06}, namely two words over~$\Si$ are super plactic congruent if and only if they lead to the same super tableau after applying the super analogue of Schensted's insertion algorithm.
As a consequence,  we deduce that the internal product~$\star_{\YoungT(\Si)}$ defined on $\YoungT(\Si)$ by setting~$t \star_{\YoungT(\Si)} t' :=  (t\insr{} R_{row}(t'))$
for all~$t$ and~$t'$ in $\YoungT(\Si)$, is associative.
Hence, the set~$\YoungT(\Si)$ equipped with the product~$\star_{\YoungT(\Si)}$ forms a monoid that is isomorphic to  the super plactic monoid.

We introduce in Section~\ref{S:SuperJeudeTaquin} the \emph{super jeu taquin} as an algorithm on the structure of super tableaux consisting in applying  successively \emph{forward sliding operations} on a  super skew tableau that move an inner corner into an outer position by keeping the rows and the columns weakly increasing until no more inner corners remain in the initial super skew tableau.  We prove 
 that the \emph{rectification} of a super skew tableau~$S$ by the super jeu de taquin is the unique super  tableau whose reading is equivalent to the reading of~$S$ with respect to the super plactic congruence.
As a consequence, we show in Theorem~\ref{T:ConfluenceJeuDeTaquin} that the super jeu de taquin is \emph{confluent}, that is, the resulting super  tableau does not depend on the order in which we choose inner corners in the forward sliding procedure.  Explicitly, if there are two sequences of sliding operations that transform a super skew tableau~$S$ into two different super tableaux~$S_1$ and~$S_2$, then we continue applying sliding operations until we reach two super  tableaux~$\tilde S_1$  and~$\tilde S_2$ without inner corners.
Since the readings of~$\tilde S_1$ and $\tilde S_2$  are super plactic congruent, we deduce that $\tilde S_1=\tilde S_2$,  by the cross-section property. Moreover, we relate in Subsection~\ref{SS:InsertionsSuperJeuDeTaquin} the super jeu de taquin to the insertion algorithms and we show how we can insert a  super tableau into another one by taquin. 

Using the super jeu de taquin, we introduce in Section~\ref{S:SuperJeuDeTaquinSuperEvacuation} the super analogue of the Sch\"{u}tzenberger's  evacuation procedure which transforms a super tableau~$t$ over a signed alphabet~$\Si$ into an \emph{opposite tableau}, denoted by~$t^{\mathrm{op}}$, over the \emph{opposite alphabet}~$\Si^{op}$  obtained from~$\Si$ by reversing its order. We show in Theorem~\ref{T:DualityTableauTheorem} that the super tableaux~$t$ and~$t^{\mathrm{op}}$ have the same shape and that the map~$t\mapsto  t^{\mathrm{op}}$ is an involution on~$\YoungT(\Si)$ that is compatible with the super plactic congruence.

Following Fomin's construction of growth diagrams~\cite{Fomin99}, we give in Section~\ref{S:SuperLittlewoodRichardsonRule} an interpretation of the super jeu de taquin in terms of \emph{super growth diagrams}.  Using this diagrammatic interpretation, we deduce that the super jeu de taquin  is \emph{symmetric}, that is, the corresponding super growth diagram can be constructed either starting from its topmost row and leftmost column or starting from its rightmost column and  bottom-most row using \emph{Fomin's local rule}. As a consequence, we show in Theorem~\ref{T:SuperLittlewoodRichardsonRule}  that the number of super skew tableaux of shape~$\lambda/\mu$  that rectify to a given super tableau~$t$ of shape~$\nu$, denoted by~$c_{\lambda,\mu}^{\nu}$,  does not depend on~$t$, and depends only on the partitions~$\lambda$,~$\mu$  and~$\nu$. Moreover, we show  that  the following identity
\[
S_{\lambda/\mu}\;=\; \underset{\nu}{\sum} c_{\lambda,\mu}^{\nu} S_{\nu}
\]
holds in the tableau~$\mathbb{Z}$-algebra rising from the super plactic monoid, where~$S_{\lambda/ \mu}$ and~$S_{\nu}$ denote respectively the formal sum of all super tableaux of shape~$\lambda/\mu$ and~$\nu$  over a finite signed alphabet.

\subsubsection*{Notation}
Let~$\mathcal{A}$ be  a totally ordered alphabet. We will denote by $\mathcal{A}^\ast$ the free monoid of \emph{words over~$\mathcal{A}$}, the product being concatenation of words, and the identity being the empty word. When~$\mathcal{A}$ is finite, we will denote by~$\# \mathcal{A}$ its cardinal number. We will denote by $w=x_1\ldots x_k$ a word in~$\mathcal{A}^\ast$ of \emph{length} $k$, where $x_1,\ldots,x_k$ belong to~$\mathcal{A}$. The length of a word~$w$ will be denoted by~$|w|$. Let~$w=x_1\ldots x_k$ be a word in~$\mathcal{A}^\ast$. 
The \emph{mirror word} of~$w$  is the word~$x_k\ldots x_1$ obtained by reversing its elements.
We denote by~$\ell(w)$ the leftmost letter of $w$ and by~$\rem(w)$ the subword of $w$ such that~$w=\ell(w)\rem(w)$. A word~$w'$ is a \emph{subsequence} of~$w$ if~$w'=x_{i_1}\ldots x_{i_l}$ with~$1\leq i_1< \ldots <i_l<k$. We will denote by~$[n]$ the ordered set~$\{1<2<\ldots<n\}$ for $n$ in $\Zb_{>0}$.
Let $\Si$ be a finite or countable totally ordered  set and~$||.||:\Si\to \mathbb{Z}_2$ be any map, where $\mathbb{Z}_2=\{0,1\}$ denotes the additive cyclic group of order $2$. The ordered pair $(\Si,||.||)$ is called  a \emph{signed alphabet} and we denote $\Si_0 =\{a\in \Si \;\big|\; ||a||=0\}$ and $\Si_1 =\{a\in \Si \;\big|\; ||a||=1\}$.  
A monoid~$\M$ is said a \emph{$\Zb_2$-graded monoid} or a \emph{supermonoid} if a map $||.||:\M\to \Zb_2$ is given such that $||u.v||= ||u||+||v||$, for all~$u$ and~$v$ in~$\M$. We call $||u||$ the $\Zb_2$-degree of the element~$u$. The free monoid~$\Si^\ast$ over~$\Si$ is  $\Zb_2$-graded  by considering~$||w||:=||x_1||+\ldots+||x_k||$, for any word~$w=x_1\ldots x_k$ in~$\Si^\ast$. In the rest of this article, and if there is no possible confusion,~$\Si$ denotes a signed alphabet.

\newpage

\section{The super plactic monoid}
\label{SS:SuperPlacticMonoid}

In this section, we recall from~\cite{LaScalaNardozzaSenato06} the structure of super Young tableaux and insertion algorithms on super tableaux. 
We also recall the notion of super plactic monoid and the cross-section property for this monoid and we deduce properties of the insertion product on super tableaux.

\subsection{Super Young tableaux}
\label{SS:SuperYoungTableaux}
A \emph{partition} of a positive integer~$n$, denoted by~$\lambda\vdash n$, is a weakly decreasing sequence~$\lambda=(\lambda_1,\ldots,\lambda_k)\in\mathbb{N}^k$ such that~$ \sum\lambda_i = n$. The integer~$k$ is called \emph{number of parts} or \emph{height} of~$\lambda$. 
Denote by~$\Pr_n$ the set of partitions of a positive integer~$n$, and set~$\Pr=\underset{n\in\mathbb{N}}{\bigcup\Pr_n}$.
The \emph{(Ferrers)--Young diagram} of a partition~$\lambda=(\lambda_1,\ldots,\lambda_k)$ is the set
\[
\Yr(\lambda)\;:=\;\big\{ (i,j)\;\big|\; 1\leq i\leq k, 1\leq j\leq \lambda_i \big\},
\]
that can be represented by a diagram by drawing a box for each pair~$(i,j)$. For instance, the Young diagram for~$\lambda=(3,3,1,1)$ is the following
\begin{equation}
\label{Ex:YoungtableauExample}
\ytableausetup{smalltableaux}
\begin{ytableau}
\empty& \empty&\empty \\
\empty&\empty&\empty\\
\empty&\none\\
\empty&\none
\end{ytableau}.
\end{equation}
The transposed diagram~$\{(j,i)\;\big|\; (i,j)\in\Yr(\lambda)\}$ defines another partition~$\widetilde{\lambda}\vdash n$, called the \emph{conjugate partition} of~$\lambda$, whose parts are the lengths of the columns of $\Yr(\lambda)$.
Let~$\lambda\vdash n$ be a partition. A \emph{super semistandard Young tableau}, or \emph{super tableau} for short, over~$\Si$ is a pair $t:=(\lambda,\T)$ where $\T:\Yr(\lambda)\to \Si$ is a map satisfying the following two conditions:
\begin{enumerate}[\bf i)]
\item $\T(i,j)\leq \T(i,j+1)$, with $\T(i,j) =\T(i,j+1)$ only if $||\T(i,j)||=0$,
\item $\T(i,j)\leq \T(i+1,j)$, with $\T(i,j) =\T(i+1,j)$ only if $||\T(i,j)||=1$.
\end{enumerate} 
We will call~$\Yr(\lambda)$,~$\T$ and~$\lambda$, the \emph{frame}, the \emph{filing} and the \emph{shape} of the super tableau~$t$ respectively.
When the map~$\T$ is injective, we say that~$t$ is a \emph{standard Young tableau}, or \emph{standard tableau} for short, over~$\Si$. 
Denote by~$\emptyset$ the empty super tableau over~$\Si$.
Denote by~$\YoungT(\Si)$  the set of all super tableaux over~$\Si$ and by~$\YoungT(\Si,\lambda)$ the set of all super tableaux  of shape~$\lambda$ over~$\Si$. Note that the usual notion of semistandard tableaux of type~A,~\cite{Fulton97}, is recovered when~$||.||$ has a constant value. More precisely, when $\Si=\Si_0$ (resp.~$\Si=\Si_1$), we obtain row-strict (resp.~column-strict) semistandard tableaux. 

Let~$\lambda$ and~$\mu$ be in~$\Pr$ with heights~$k$ and~$l$ respectively such that~$l\leq k$. We write~$\mu\subseteq\lambda$ if~$\mu_i\leq \lambda_i$, for any~$i$, that is the Young diagram of~$\mu$ is contained in that of~$\lambda$. In this case, the set
\[
\Yr(\lambda/\mu):=\big\{(i,j)\;\big|\; 1\leq i\leq k, \mu_i< j\leq \lambda_i \big\}
\]
is called a \emph{skew diagram}~$\lambda/\mu$ or a \emph{skew shape}. We denote~$\lambda/0 :=\lambda$ the skew shape corresponding to the Young diagram~$\Yr(\lambda)$.
For instance, the following diagram
\[
\scalebox{0.8}{
\ytableausetup{smalltableaux}
\begin{ytableau}
\none& \none&\none&\empty&\empty \\
\none&\none&\none&\empty\\
\none&\empty&\empty&\empty\\
\empty&\empty&\empty\\
\empty&\none
\end{ytableau}}
\]
corresponds to  the skew shape~$(5,4,4,3,1)/(3,3,1)$.
The \emph{super Young lattice} is a partially ordered set, denoted by~$(\Pr,\subseteq)$, formed by all integer partitions ordered by inclusion of their Young diagrams.
Let~$\lambda/\mu$ be a skew shape.
A \emph{super semistandard skew tableau}, or \emph{super skew tableau} for short,  of \emph{shape}~$\lambda/\mu$ and \emph{frame}~$\Yr(\lambda/\mu)$ over~$\Si$ is a pair $S:=(\lambda/\mu,\U)$ where $\U:\Yr(\lambda/\mu)\to \Si$ is a map satisfying the following two conditions:
\begin{enumerate}[\bf i)]
\item $\U(i,j)\leq \U(i,j+1)$, with $\U(i,j) =\U(i,j+1)$ only if $||\U(i,j)||=0$,
\item $\U(i,j)\leq \U(i+1,j)$, with $\U(i,j) =\U(i+1,j)$ only if $||\U(i,j)||=1$.
\end{enumerate} 
Denote by~$\SkewT(\Si)$  the set of all super skew tableaux over~$\Si$ and by~$\SkewT(\Si,\lambda/\mu)$ the set of all super skew tableaux of shape~$\lambda/\mu$ over~$\Si$. Note that we will identify the set  of super tableaux of shape~$\lambda$ with the set of super skew tableaux of shape~$\lambda/0$.

Denote by
\[
R_{row}:\SkewT(\Si)\to \Si^\ast
\]  
the \emph{reading map} that reads a super tableau row-wise from bottom to top and from left to right. For instance, consider the alphabet~$\Si=\{1,2,3,4,5\}$ with signature given by~$\Si_0=\{1,2,4\}$ and~$\Si_1$ defined consequently. 
The following diagram is a super tableau over~$\Si$ corresponding to the frame~\eqref{Ex:YoungtableauExample}:
\begin{equation}
\label{Ex:tableauExample}
\raisebox{-0.45cm}{$t\;
=
\;$}
\ytableausetup{smalltableaux}
\begin{ytableau}
1& 1&2 \\
3&4&4\\
5\\
5
\end{ytableau}
\qquad
\raisebox{-0.45cm}{$\text{ with }~R_{row}(t)=55344112.$}
\end{equation}

A \emph{row} (resp. \emph{column}) is a word~$x_1\ldots x_k$ in~$\Si^\ast$ such that~$x_{i}\leq x_{i+1}$ (resp.~$x_{i+1}\leq x_{i}$) with~$x_{i}= x_{i+1}$ only if~$||x_i||=0$ (resp.~$||x_i||=1$). In other words, a row (resp. column) is the $R_{row}$-reading of a super tableau whose shape is a row (resp. column).

\subsection{Insertion on super tableaux}
\label{SS:InsertionsOnSuperTableaux}

Recall from~\cite{LaScalaNardozzaSenato06} the right and left insertion algorithms on~$\YoungT(\Si)$ that insert an element~$x$ in~$\Si$ into a super tableau~$t$ of~$\YoungT(\Si)$.
The \emph{right (or row) insertion}, denoted by~$\insr{}$, computes a super tableau $t\insr{} x$ as follows. If~$x\in\Si_o$ (resp.~$x\in\Si_1$) is at least as large as (resp. larger than) the last element of the top row of $t$, then put~$x$ in a box to the right of this row. Otherwise, let~$y$ be the smallest element of the top row of~$t$ such that~$y> x$ (resp.~$y\geq x$). Then~$x$ replaces~$y$ in this row and~$y$ is bumped into the next row where the process is repeated. The algorithm terminates when the element which is bumped is at least as large as (resp. larger than) the last element of the next row. Then it is placed in a box  at the right of that row. For instance, consider~$\Si=\mathbb{N}$ with signature given by~$\Si_0$ the set of even numbers and~$\Si_1$ defined consequently. The four steps to compute
$\big(\raisebox{0.1cm}{\ytableausetup{smalltableaux} 
\begin{ytableau}
1& 2 &2&3\\
1 & 3&4 \\
3
\end{ytableau}}\insr{} 2 \big)$ are: 
\begin{equation}
\label{Eq:RightInsertionExample}
\ytableausetup{smalltableaux}
\begin{ytableau}
1& 2 &2&*(cyan)3&  \none& \none[\insr{}]&\none& *(red) 2\\
1 & 3&4 \\
3
\end{ytableau}
\qquad
\underset{\fl}
\quad
\ytableausetup{smalltableaux}
\begin{ytableau}
1&2&2&*(red) 2 \\
1 & *(green) 3&4& \none& \none[\insr{}]&\none& *(cyan) 3\\
3
\end{ytableau}
\qquad
\underset{\fl}
\quad
\ytableausetup{smalltableaux}
\begin{ytableau}
1&2&2& *(red) 2  \\
1 & *(cyan) 3&4 \\
*(yellow) 3 & \none&  \none[\insr{}]&\none& *(green) 3
\end{ytableau}
\qquad
\underset{\fl}
\quad
\ytableausetup{smalltableaux}
\begin{ytableau}
1&2&2& *(red) 2  \\
1 & *(cyan) 3&4 \\
*(green) 3\\
\none & \none&  \none[\insr{}]&\none & *(yellow) 3
\end{ytableau}
\qquad
\underset{\fl}
\quad
\ytableausetup{smalltableaux}
\begin{ytableau}
1&2&2& *(red) 2  \\
1 & *(cyan) 3&4 \\
*(green) 3\\
*(yellow) 3 
\end{ytableau}
\end{equation}

The \emph{left (or column) insertion}, denoted by~$\insl{}$, computes a super tableau~$x\insl{} t$ as follows. If~$x\in\Si_o$ (resp.~$x\in\Si_1$) is larger than (resp. at least as large as) the bottom element of the
leftmost column of~$t$, then put~$x$ in a box to the bottom of this column. Otherwise, let~$y$ be the smallest element of the leftmost column of~$t$ such that~$y\geq x$ (resp.~$y>x$). Then~$x$ replaces~$y$ in this column and~$y$ is bumped into the next column where the process is repeated. The algorithm terminates when the element which is bumped is greater than (resp. at least as large as) all the elements of the next column. Then it is placed in a box at the bottom of that column. For instance, consider~$\Si=\mathbb{N}$ with signature given by~$\Si_0$ the set of even numbers and~$\Si_1$ defined consequently. The five steps to compute
$\big(1 \insl{} \raisebox{0.1cm}{\ytableausetup{smalltableaux}
\begin{ytableau}
1& 2 &5&6\\
1 &4 &5 \\
2
\end{ytableau}}\,\big)$ are:
\begin{equation}
\label{Eq:LeftInsertionExample}
\ytableausetup{smalltableaux}
\begin{ytableau}
1&  2&5&6 \\
 1 & 4&5 \\
*(cyan)2\\
\none[\uparrow]\\
*(red) 1
\end{ytableau}
\qquad
\underset{\fl}
\quad
\ytableausetup{smalltableaux}
\begin{ytableau}
1&  *(green) 2&5&6  \\
 1 &4& 5 \\
*(red)1\\
\none &\none[\uparrow]\\
\none & *(cyan) 2
\end{ytableau}
\qquad
\underset{\fl}
\quad
\ytableausetup{smalltableaux}
\begin{ytableau}
1&  *(cyan) 2 &*(yellow) 5&6 \\
1 & 4&5 \\
*(red) 1\\
\none &\none &\none[\uparrow]\\
\none & \none & *(green) 2
\end{ytableau}
\qquad
\underset{\fl}
\quad
\ytableausetup{smalltableaux}
\begin{ytableau}
1&  *(cyan) 2 & *(green) 2&*(BlueD)6 \\
1 &4& 5 \\
*(red) 1\\
\none&\none &\none &\none[\uparrow]\\
\none&\none & \none & *(yellow) 5
\end{ytableau}
\qquad
\underset{\fl}
\quad
\ytableausetup{smalltableaux}
\begin{ytableau}
1&  *(cyan) 2 & *(green) 2& *(yellow) 5 \\
 1 & 4&5 \\
*(red)1\\
\none&\none &\none &\none &\none[\uparrow]\\
\none&\none & \none &\none & *(BlueD) 6
\end{ytableau}
\qquad
\underset{\fl}
\quad
\ytableausetup{smalltableaux}
\begin{ytableau}
1&  *(cyan) 2 & *(green) 2& *(yellow) 5 & *(BlueD) 6\\
1 & 4&5 \\
*(red) 1
\end{ytableau}
\end{equation}

Note that when~$\Si=\Si_0 = [n]$, the right (resp. left) insertion corresponds to Schensted's right (resp.~left) insertion introduced in~\cite{Schensted61} on row-strict semistandard tableaux over~$[n]$. 
For any word $w=x_1\ldots x_k$  over~$\Si$, denote by~$C_{\YoungT(\Si)}(w)$ the super tableau obtained from~$w$ by inserting its letters iteratively from left to right using the right insertion starting from the empty super tableau:
\[
C_{\YoungT(\Si)}(w) 
\;
:=
\;
(\emptyset \insr{} w)
\;
=
\;((\ldots(\emptyset \insr{} x_1)  \insr{} \ldots )\insr{} x_k).
\]
Note that for any super tableau~$t$ in~$\YoungT(\Si)$, the equality~$C_{\YoungT(\Si)}(R_{row}(t))=t$ holds in~$\YoungT(\Si)$,~\cite{LaScalaNardozzaSenato06}. 

Define now an internal product~$\star_{\YoungT(\Si)}$ on $\YoungT(\Si)$ by setting 
\[
t \star_{\YoungT(\Si)} t' :=  (t\insr{} R_{row}(t'))
\]
for all $t,t'$ in $\YoungT(\Si)$.
By definition the relations $t\star_{\YoungT(\Si)} \emptyset = t$ and $\emptyset \star_{\YoungT(\Si)} t = t$ hold, showing that the product~$\star_{\YoungT(\Si)}$ is unitary with respect to~$\emptyset$.

\subsection{The super plactic monoid}
\label{SS:SuperPlacticMonoid}

The \emph{super plactic monoid} over~$\Si$, denoted by~$\P(\Si)$,  is the quotient of the free monoid~$\Si^\ast$ by the congruence generated by the family of \emph{super Knuth-like relations}~\eqref{Eq:SuperKnuthRelations},~\cite{LaScalaNardozzaSenato06}. This congruence, denoted by~$\sim_{\P(\Si)}$, is called the \emph{super plactic congruence}.
We say that two words over~$\Si$ are \emph{super plactic equivalent} if one can be transformed into the other under the relations~\eqref{Eq:SuperKnuthRelations}.
Since the relations~(\ref{Eq:SuperKnuthRelations}) are $\Zb_2$-homogeneous, we have that~$\P(\Si)$ is a supermonoid. Moreover, for any~$w$ in~$\Si^\ast$, the words~$w$ and~$R_{row}(C_{\YoungT(\Si)}(w))$ are super plactic equivalent,~\cite{LaScalaNardozzaSenato06}. Note finally that the set~$\YoungT(\Si)$ satisfies the \emph{cross-section property} for~$\sim_{\P(\Si)}$, that is, for all~$w$ and~$w'$ in~$\Si^\ast$,~\cite{LaScalaNardozzaSenato06}:
\begin{equation}
\label{Eq:CrossSectionProperty}
w \sim_{\P(\Si)} w' \quad \text{ if and only if}\quad C_{\YoungT(\Si)}(w) = C_{\YoungT(\Si)}(w').
\end{equation}

As a consequence of the cross-section property, we deduce  that the product~$\star_{\YoungT(\Si)}$  is associative and the following equality
\[
y \insl{} (t \insr{} x)
\: = \:
(y \insl{} t) \insr{} x
\]
holds in~$\YoungT(\Si)$, for all~$t$ in~$\YoungT(\Si)$ and $x,y$ in~$\Si$.  In particular, for any word~$w=x_1\ldots x_k$ in~$\Si^\ast$ the super tableau~$C_{\YoungT(\Si)}(w)$ can be also computed by inserting its elements iteratively from right to left using the left insertion starting from the empty super  tableau:
\[
C_{\YoungT(\Si)}(w)
\; =\;
(w\insl{} \emptyset)
\; :=\;
(x_1 \insl{} ( \ldots\insl{}  (x_k\insl{} \emptyset)\ldots)).
\]
Note that the associativity of the product~$\star_{\YoungT(\Si)}$ is deduced in Corollary~\ref{C:AssociativityInsertionProductTableau} using the properties of the super jeu de taquin introduced in the next section.

Let~$w$ be in~$\Si^\ast$. For any~$k\geq 0$, denote by~$l_{k}(w)$ (resp.~$\widetilde{l}_k(w)$) the maximal number which can be obtained as the sum of the lengths of~$k$ rows (resp.~columns) that are disjoint subsequences of~$w$. The integers~$l_{k}(w)$ and~$\widetilde{l}_k(w)$ are the super analogues of \emph{Greene's row and columns invariants},~\cite{Greene74}. For any~$w'$ in~$\Si^\ast$, if~$w\sim_{\P(\Si)}w'$ then~$l_{k}(w)= l_{k}(w')$ for any~$k$,~\cite{LaScalaNardozzaSenato06}. Moreover, let~$\lambda=(\lambda_1,\ldots,\lambda_k)$ be the shape of~$C_{\YoungT(\Si)}(w)$  and~$\widetilde{\lambda}=(\widetilde{\lambda}_1,\ldots, \widetilde{\lambda}_l)$ be the conjugate partition of~$\lambda$. For any~$k\geq 0$, we have~$l_{k}(w) = \lambda_1 +\ldots +\lambda_k$  and~$\widetilde{l}_k(w)= \widetilde{\lambda}_1 + \ldots+ \widetilde{\lambda}_k$,~\cite{LaScalaNardozzaSenato06}.
Then, the equality~$\lambda_k=l_{k}(w)-l_{k-1}(w)$ holds, for any~$k$. Hence, we obtain the following result.

\begin{lemma}
\label{L:GreeneInvariants}
Let~$w$ and~$w'$ be two words over~$\Si$. If the equality~$l_{k}(w)= l_{k}(w')$ holds for any~$k$, then the super tableaux~$C_{\YoungT(\Si)}(w)$ and~$C_{\YoungT(\Si)}(w')$ have the same shape. 
\end{lemma}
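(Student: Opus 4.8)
The plan is to show that the whole shape $\lambda$ of $C_{\YoungT(\Si)}(w)$ is a function of the sequence $\big(l_{k}(w)\big)_{k\geq 0}$ alone, after which the statement is immediate. First I would invoke the two facts recalled from~\cite{LaScalaNardozzaSenato06} immediately above the statement: if $\lambda=(\lambda_1,\ldots,\lambda_h)$ denotes the shape of $C_{\YoungT(\Si)}(w)$, then $l_{k}(w)=\lambda_1+\ldots+\lambda_k$ for every $k\geq 0$, with the convention that the empty sum is $0$ (so $l_{0}(w)=0$) and $\lambda_i=0$ for $i>h$. Taking successive differences yields $\lambda_k=l_{k}(w)-l_{k-1}(w)$ for every $k\geq 1$; in particular the height $h$ of $\lambda$ is the largest index $k$ with $l_{k}(w)>l_{k-1}(w)$, and is therefore also recovered from the sequence $\big(l_{k}(w)\big)_{k}$.

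Now let $\lambda$ and $\lambda'$ be the shapes of $C_{\YoungT(\Si)}(w)$ and $C_{\YoungT(\Si)}(w')$ respectively, and apply the displayed formula to each word: $\lambda_k=l_{k}(w)-l_{k-1}(w)$ and $\lambda'_k=l_{k}(w')-l_{k-1}(w')$ for all $k\geq 1$. Under the hypothesis that $l_{k}(w)=l_{k}(w')$ for all $k$ (which includes $l_{0}(w)=0=l_{0}(w')$), these two expressions agree for every $k$, hence $\lambda_k=\lambda'_k$ for all $k$ and $\lambda=\lambda'$. There is no genuine obstacle: the entire content is carried by the recalled identification of the super analogue of Greene's row invariant with the partial sums of the shape of $C_{\YoungT(\Si)}(\cdot)$, and the lemma is simply the remark that a partition is determined by its sequence of partial sums via successive differences. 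The only point requiring a word of care is the treatment of the boundary values ($l_{0}=0$, and the vanishing of $\lambda_k$ past the height), which I would dispatch by the convention stated above.
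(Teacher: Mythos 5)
Your proof is correct and follows essentially the same route as the paper: it invokes the recalled identity $l_k(w)=\lambda_1+\ldots+\lambda_k$ from~\cite{LaScalaNardozzaSenato06} and recovers the shape by the successive differences $\lambda_k=l_k(w)-l_{k-1}(w)$, which is exactly the argument the paper gives immediately before the lemma. Your extra care with the boundary conventions ($l_0=0$ and $\lambda_k=0$ beyond the height) is a reasonable tidying of the same idea.
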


\section{The super Jeu de taquin}
\label{S:SuperJeudeTaquin}

Following the approach given in~\cite{Fulton97} for the non-signed case, we introduce the super jeu de taquin procedure which transforms super skew tableaux into super  tableaux and we  show that it is confluent. We relate this procedure to insertion algorithms  and we show how insertion on super tableaux  can be performed by taquin.

\subsection{The forward sliding}
\label{SS:ForwardSlidings}

Let~$\lambda/\mu$ be a skew shape. A super skew tableau of shape~$\lambda/\mu$ which is not a super tableau has at least one inner corner. An \emph{inner corner} is a box in the diagram~$\Yr(\mu)$ such that the boxes below and to the right do not belong to~$\Yr(\mu)$. An \emph{outer corner} is a box such that neither box below or to the right is in~$\Yr(\lambda/\mu)$. Note that in some cases inner corners can be also outer corners. For instance, consider~$\Si=\mathbb{N}$ with signature given by~$\Si_0$ the set of even numbers and~$\Si_1$ defined consequently. The red box in the following super skew tableau of shape~$(6,5,5,3,1)/(4,4,4,3)$ is an inner corner, the blue one is an inner and outer corner, and the green one is an outer corner that is not an inner corner:
\[
{\ytableausetup{smalltableaux}
\begin{ytableau}
\none& \none&\none&\none&2&*(GreenD)2\\
\none&\none&\none&\none&3\\
\none&\none&\none&\empty*(RedD)&3\\
\none&\none&\empty*(BlueD)\\
5
\end{ytableau}}
\]
A \emph{sliding operation} belongs to one of the following operations:
\begin{enumerate}[\bf i)]
\item \emph{vertical sliding:}~$\ytableausetup{mathmode, smalltableaux}
{\begin{ytableau}
 *(GreenD)&y\\
   x
\end{ytableau}}
 \;\rightarrow\;
{ \begin{ytableau}
x& y\\
 *(GreenD)&\none
\end{ytableau}}$,\;
for any  $x\leq y$ with $x=y$ only if~$||x||=0$,
\item \emph{horizontal sliding:}~${\begin{ytableau}
 *(GreenD)&x\\
y
\end{ytableau}}
\;\rightarrow\;
{\begin{ytableau}
x& *(GreenD)\\
y&\none 
\end{ytableau}}$,\; for any~$x\leq y$ with $x=y$ only if~$||x||=1$.
\end{enumerate} 
Note that if~$x$ or~$y$ in~{\bf i)} and~{\bf ii)} are empty, the following operations are performed:
\[
{\begin{ytableau}
 *(GreenD)&\empty\\
x
\end{ytableau}}
\;\rightarrow\;
{\begin{ytableau}
x&\empty\\
 *(GreenD)&\none 
\end{ytableau}},
\qquad 
{\begin{ytableau}
*(GreenD) &x\\
\empty&\none
\end{ytableau}}
\;\rightarrow\;
{\begin{ytableau}
x& *(GreenD)\\
\empty&\none 
\end{ytableau}},
\qquad
{\begin{ytableau}
*(GreenD) &\empty\\
\empty&\none
\end{ytableau}}
\;\rightarrow\;
{\begin{ytableau}
\empty& *(GreenD)\\
\empty&\none 
\end{ytableau}}\quad \text{ for any } x.
\]
A \emph{forward sliding} is a sequence of sliding operations starting from a super skew tableau and one of its inner corners, and moving the empty box until it becomes an outer corner.
The \emph{super jeu de taquin} on a super skew tableau~$S$ consists in applying successively the forward sliding algorithm starting from~$S$ until we get a diagram without inner corners. The resulted diagram, denoted  by $\Rec(S)$, is called the \emph{rectification} of~$S$.
Two super  skew  tableaux~$S$ and~$S'$ are \emph{super jeu de taquin equivalent} if and only if~$S$ can be obtained from~$S'$ by a sequence of sliding operations.

\subsubsection{Example}
\label{SS:ExampleJeuDeTaquin}
Consider~$\Si=\mathbb{N}_{n>0}$ with signature given by~$\Si_0$ the set of even numbers and~$\Si_1$ defined consequently. 
The super jeu de taquin on the following super skew tableau~$S$ applies six occurrences of forward sliding:
\[
\raisebox{-0.5cm}{$S=$}
{
\ytableausetup{smalltableaux}
\begin{ytableau}
\none&\none&2&2\\
\none&\none&3\\
\none&\empty*(RedD)&3\\
1&2\\
5
\end{ytableau}
\;
\raisebox{-0.5cm}{$\rightarrow$}}
\;
{
\ytableausetup{smalltableaux}
\begin{ytableau}
\none&\none&2&2\\
\none&\none&3\\
\none&2&3\\
1&\empty*(RedD)\\
5
\end{ytableau}}
\;
\raisebox{-0.5cm}{$;$}
\quad
{
\ytableausetup{smalltableaux}
\begin{ytableau}
\none&\none&2&2\\
\none&\none&3\\
\empty*(BlueD)&2&3\\
1\\
5
\end{ytableau}
\;
\raisebox{-0.5cm}{$\rightarrow$}}
\;
{\ytableausetup{smalltableaux}
\begin{ytableau}
\none&\none&2&2\\
\none&\none&3\\
1&2&3\\
\empty*(BlueD)\\
5
\end{ytableau}
\;
\raisebox{-0.5cm}{$\rightarrow$}}
\;
{\ytableausetup{smalltableaux}
\begin{ytableau}
\none&\none&2&2\\
\none&\none&3\\
1&2&3\\
5\\
\empty*(BlueD)
\end{ytableau}}
\;
\raisebox{-0.5cm}{$;$}
\quad
{\ytableausetup{smalltableaux}
\begin{ytableau}
\none&\none&2&2\\
\none&\empty*(Blue)&3\\
1&2&3\\
5
\end{ytableau}
\;
\raisebox{-0.5cm}{$\rightarrow$}}
\;
{\ytableausetup{smalltableaux}
\begin{ytableau}
\none&\none&2&2\\
\none&2&3\\
1&\empty*(Blue)&3\\
5
\end{ytableau}
\;
\raisebox{-0.5cm}{$\rightarrow$}}
\;
{\ytableausetup{smalltableaux}
\begin{ytableau}
\none&\none&2&2\\
\none&2&3\\
1&3&\empty*(Blue)\\
5
\end{ytableau}}
\;
\raisebox{-0.5cm}{$;$}
\]
\[
{\ytableausetup{smalltableaux}
\begin{ytableau}
\none&\empty*(GreenL)&2&2\\
\none&2&3\\
1&3\\
5
\end{ytableau}
\;
\raisebox{-0.5cm}{$\rightarrow$}}
\;
{
\ytableausetup{smalltableaux}
\begin{ytableau}
\none&2&2&2\\
\none&\empty*(GreenL)&3\\
1&3\\
5
\end{ytableau}
\;
\raisebox{-0.5cm}{$\rightarrow$}}
\;
{\ytableausetup{smalltableaux}
\begin{ytableau}
\none&2&2&2\\
\none&3&\empty*(GreenL)\\
1&3\\
5
\end{ytableau}}
\;
\raisebox{-0.5cm}{$;$}
\quad
{
\ytableausetup{smalltableaux}
\begin{ytableau}
\none&2&2&2\\
\empty*(GreenD)&3\\
1&3\\
5
\end{ytableau}
\;
\raisebox{-0.5cm}{$\rightarrow$}}
\;
{
\ytableausetup{smalltableaux}
\begin{ytableau}
\none&2&2&2\\
1&3\\
\empty*(GreenD)&3\\
5
\end{ytableau}
\;
\raisebox{-0.5cm}{$\rightarrow$}}
\;
{
\ytableausetup{smalltableaux}
\begin{ytableau}
\none&2&2&2\\
1&3\\
3&\empty*(GreenD)\\
5
\end{ytableau}}
\;
\raisebox{-0.5cm}{$;$}
\]
\[
{
\ytableausetup{smalltableaux}
\begin{ytableau}
\empty*(YellowD)&2&2&2\\
1&3\\
3\\
5
\end{ytableau}
\;
\raisebox{-0.5cm}{$\rightarrow$}}
\;
{
\ytableausetup{smalltableaux}
\begin{ytableau}
1&2&2&2\\
\empty*(YellowD)&3\\
3\\
5
\end{ytableau}
\;
\raisebox{-0.5cm}{$\rightarrow$}}
\;
{
\ytableausetup{smalltableaux}
\begin{ytableau}
1&2&2&2\\
3&\empty*(YellowD)\\
3\\
5
\end{ytableau}}
\;
\raisebox{-0.5cm}{$.$}
\]
Hence we obtain
\[
\raisebox{-0.5cm}{$\Rec(S)\;=\;$}
{
\ytableausetup{smalltableaux}
\begin{ytableau}
1&2&2&2\\
3\\
3\\
5
\end{ytableau}}
\]

\begin{lemma}
\label{L:RectificationTableau}
Let~$S$ be in~$\SkewT(\Si)$. The rectification~$\Rec(S)$ of~$S$ is a super tableau.
\end{lemma}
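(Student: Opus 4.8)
The plan is to argue by induction on $|\mu|$, where $\lambda/\mu$ is the shape of~$S$. The key observation, used repeatedly, is that a super skew tableau of shape~$\lambda/\mu$ has an inner corner if and only if $\mu\neq\emptyset$: if $\mu\neq\emptyset$, any removable box $c=(i,\mu_i)$ of~$\mu$ (one with $\mu_{i+1}<\mu_i$) has both its right neighbour $(i,\mu_i+1)$ and its bottom neighbour $(i+1,\mu_i)$ outside~$\Yr(\mu)$, so it is an inner corner; conversely an inner corner lies in~$\Yr(\mu)$ by definition. Since super tableaux of shape~$\nu$ are identified with super skew tableaux of shape~$\nu/0$, this says precisely that a super skew tableau has no inner corner exactly when it is a super tableau. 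In particular the base case $|\mu|=0$ is immediate: then $\Rec(S)=S$ is already a super tableau.

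For the inductive step, assume $|\mu|\geq 1$, so that by the observation above $S$ has an inner corner and the super jeu de taquin on~$S$ performs at least one forward sliding. Fix any run of it, let $c$ be the inner corner used in its first forward sliding, and analyse that forward sliding, starting from $(S,c)$. The heart of the argument is the claim that \emph{each elementary sliding operation preserves conditions~{\bf i)}--{\bf ii)} of the filling away from the empty box, and moves that box one step to the right or down.} Granting this, the position of the hole is monotone — operations~{\bf i)} and~{\bf ii)} only move an entry up or to the left — so the forward sliding halts after finitely many steps with the hole at an outer corner~$d$, and deleting that box produces a genuine super skew tableau~$S'$. The claim itself I would prove exactly as in the non-signed case treated in~\cite{Fulton97}, by a finite case analysis on the local configuration around the hole, including the degenerate cases where the box below or to the right of the hole is absent. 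The only point special to the super setting is that each equality between two entries that a slide might create must be forbidden by the appropriate $\Zb_2$-degree, which is exactly what the side conditions attached to the two sliding operations guarantee ($x=y$ only if $||x||=0$ for the vertical slide, only if $||x||=1$ for the horizontal one), given that~{\bf i)}--{\bf ii)} hold for the input.

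It remains to pin down the shape of~$S'$ and close the induction. Write $d=(i,j)$ for the terminal position of the hole. Then $c$ is removable in~$\mu$, being an inner corner, and $d$ is removable in~$\lambda$: its right and bottom neighbours are not in~$\Yr(\lambda/\mu)$ since the forward sliding stopped at~$d$, and they cannot lie in~$\Yr(\mu)$ either — that would force $(i,j)\in\Yr(\mu)$ — hence they lie outside~$\Yr(\lambda)$. Every box swept by the hole except~$d$ ends up filled, so $S'$ has shape $\lambda'/\mu'$ with $\mu'=\mu\setminus\{c\}$ and $\lambda'=\lambda\setminus\{d\}$, both partitions, and $|\mu'|=|\mu|-1$. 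The remainder of the chosen run is now a run of the super jeu de taquin on~$S'$, and its output coincides with the output of the run on~$S$; since $S'\in\SkewT(\Si)$ and $|\mu'|<|\mu|$, the induction hypothesis shows this output is a super tableau. As the run was arbitrary, $\Rec(S)$ is a super tableau.

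The one genuinely substantial step is the claim highlighted in the second paragraph. The inequalities relating a moved entry to its new neighbours have to be recovered from those satisfied by the input filling; most of this is routine chaining through the corner box lying between the two entries being compared, but one must take care that the boundary configurations (a missing box below or to the right of the hole) are handled on the same footing as the generic ones, and that every borderline equality of entries is excluded by its $\Zb_2$-degree through the side conditions of the two sliding operations. I expect this bookkeeping, rather than the inductive scaffolding, to be where the real work lies.
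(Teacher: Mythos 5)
Your proposal is correct and follows essentially the same route as the paper: the entire content of the paper's proof is precisely the local case analysis of the vertical and horizontal slides that you defer to the non-signed argument of Fulton (there it reduces to the chains $r_3<x\leq y$ and $r_1<y\leq x$, obtained by chaining through the corner entries and using the $\Zb_2$-side conditions on the two sliding operations), whereas your inductive scaffolding on $|\mu|$, the termination of a forward sliding, and the shape bookkeeping are exactly the parts the paper leaves implicit. In short, what you treat as deferred bookkeeping is the whole of the paper's proof, and what you spell out the paper takes for granted.
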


\begin{proof}
It is sufficient to prove that after each sliding operation the rows (resp. columns) in the super skew tableau remain increasing from left to right (resp. top to bottom) with respect to~$\Si_0$ (resp.~$\Si_1$). Suppose that the inner corner has the following position:
\[
\ytableausetup{mathmode, smalltableaux}
\begin{ytableau}
\none &r_1&r_2\\ 
r_3&*(GreenD)&y\\
 r_4&  x
\end{ytableau}
\]
where~$r_1\leq r_2\leq y$ with~$r_1=r_2$ only if~$||r_1||=0$ and~$r_2=y$ only if~$||y||=1$, and $r_3\leq r_4\leq x$ with~$r_3=r_4$ only if~$||r_3||=1$ and~$r_4=x$ only if~$||x||=0$. Suppose first that~$x\leq y$ with~$x=y$ only if~$||y||=0$. Then we perform the following sliding operation:
\begin{equation}
\label{Eq:Rectification1}
\ytableausetup{mathmode, smalltableaux}
\begin{ytableau}
\none &r_1&r_2\\ 
r_3&*(GreenD)&y\\
 r_4&  x
\end{ytableau}
 \;\rightarrow\;
 \begin{ytableau}
\none &r_1&r_2\\ 
r_3&x&y\\
 r_4 &*(GreenD)
\end{ytableau}
\end{equation}
with~$r_3<x\leq y$ and~$x=y$ only if~$||y||=0$ showing that the row containing~$r_3$,~$x$ and~$y$ is increasing from left to right with respect to~$\Si_0$. We proceed in the same way for the case where~$y$ is empty.
Suppose now that~$y\leq x$ with~$x=y$ only if~$||y||=1$. Then we perform the following sliding operation:
\begin{equation}
\label{Eq:Rectification2}
\ytableausetup{mathmode, smalltableaux}
\begin{ytableau}
\none &r_1&r_2\\ 
r_3&*(GreenD)&y\\
 r_4&  x
\end{ytableau}
 \;\rightarrow\;
 \begin{ytableau}
\none &r_1&r_2\\ 
r_3&y&*(GreenD)\\
 r_4 &x
\end{ytableau}
\end{equation}
with~$r_1<y\leq x$ and~$x=y$ only if~$||y||=1$ showing that the column containing~$r_1$,~$y$ and~$x$ is  increasing from top to bottom with respect to~$\Si_1$.
We proceed in the same way for the case where~$x$ is empty. Note finally that when~$x$ and~$y$ are both empty, the rows (resp. columns) in the initial super skew tableau are not changed by the sliding operation and remain increasing from left to right (resp. top to bottom) with respect to~$\Si_0$ (resp.~$\Si_1$), showing the claim.
\end{proof}

\subsection{The reverse sliding}
\label{SS:ReverseSlidings}
A \emph{reverse sliding operation} belongs to one of the following operations:
\begin{enumerate}[\bf i)]
\item \emph{vertical reverse sliding:}~$\ytableausetup{mathmode, smalltableaux}
\begin{ytableau}
 \none &y\\
   x&*(GreenD)
\end{ytableau}
 \;\rightarrow\;
 \begin{ytableau}
\none &*(GreenD)\\
   x&y
\end{ytableau}$,\;
for any  $x\leq y$ with $x=y$ only if~$||x||=0$,
\item \emph{horizontal reverse sliding:}~$
\begin{ytableau}
\none &x\\
y&*(GreenD)
\end{ytableau}
\;\rightarrow\;
{\begin{ytableau}
\none &x\\
*(GreenD)&y
\end{ytableau}}$,\; for any~$x\leq y$ with $x=y$ only if~$||x||=1$.
\end{enumerate}
A \emph{reverse sliding} is a sequence of reverse sliding operations starting from a skew tableau and one of its empty outer corners, and moving this box until it becomes an inner corner.

Note that if we apply the reverse sliding operations on the right sides of the forward transformations  in~(\ref{Eq:Rectification1}) and~(\ref{Eq:Rectification2}),  we obtain their left sides. Indeed, in the first transformation when~$x \leq y$ with~$x=y$ only if~$||y||=0$, we have~$r_4\leq x$ with~$r_4=x$ only if~$||x||=0$, so we choose the vertical reverse sliding, which leads us back to the left side of~(\ref{Eq:Rectification1}). In the second case, we have~$r_2 \leq y$ with~$r_2=y$ only if~$||y||=1$, so the  horizontal reverse  sliding  leads us back to the left side of~(\ref{Eq:Rectification2}).
Hence, starting from the resulting super skew tableau of a forward sliding, together with
the outer corner that was removed, and if we apply the reverse sliding, then we will obtain
the initial super skew tableau with the chosen inner corner.

\subsection{Compatibility with the super plactic congruence} 
The sliding operations are compatible with the super plactic congruence. Indeed, for any~$x\leq y\leq z$ with $x=y$ only if~$||y||=0$ and~$y=z$ only if~$||y||=1$, we have
\[
\raisebox{-0.1cm}{$S=\;$}
\ytableausetup{mathmode, smalltableaux}
\begin{ytableau}
\none& *(GreenD)&y\\
x&z
\end{ytableau}
 \;\rightarrow\;
{\begin{ytableau}
*(GreenD)&y\\
x&z
\end{ytableau}}
\;\rightarrow\;
{\begin{ytableau}
x&y\\
z
\end{ytableau}}
\raisebox{-0.1cm}{$\;=\Rec(S),$}
\]
with~$R_{row}(S)=xzy\sim_{\P(\Si)} zxy=R_{row}(\Rec(S))$. Moreover, for any  $x\leq y\leq z$ with $x=y$ only if~$||y||=1$ and~$y=z$ only if~$||y||=0$, we have
\[ 
{\raisebox{-0.1cm}{$S=\;$}
\begin{ytableau}
 \none& *(GreenD)&x\\
y&z&\none 
\end{ytableau}}
\;\rightarrow\;
{\begin{ytableau}
 *(GreenD)&x\\
y&z
\end{ytableau}}
\;\rightarrow\;
{\begin{ytableau}
x&z\\
y 
\end{ytableau}
\raisebox{-0.1cm}{$\;=\Rec(S),$}}
\]
with~$R_{row}(S)=yzx\sim_{\P(\Si)} yxz=R_{row}(\Rec(S))$.
More generally, we show the following result.

\begin{lemma}
\label{L:KnuthequivalenceJeudeTaquin}
Let~$S$ and~$S'$ be in~$\SkewT(\Si)$. If~$S$ and~$S'$ are super jeu de taquin equivalent, then the words~$R_{row}(S)$ and~$R_{row}(S')$ are super plactic equivalent.
\end{lemma}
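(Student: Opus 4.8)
The statement asserts that super jeu de taquin equivalence implies super plactic equivalence of the row readings. Since super jeu de taquin equivalence is generated by single sliding operations, it suffices (by transitivity of $\sim_{\P(\Si)}$, which is a congruence) to prove the claim when $S'$ is obtained from $S$ by a \emph{single} sliding operation. So I reduce to: if $S \rightarrow S'$ is one vertical or horizontal sliding, then $R_{row}(S) \sim_{\P(\Si)} R_{row}(S')$.

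\textbf{Local analysis of a single slide.} A single sliding operation swaps the empty box with one neighbouring entry, affecting only two adjacent rows, say rows $i$ and $i+1$ (for a vertical slide the empty box moves up; for a horizontal slide it stays in row $i$ while the entry to its right moves left — but this only permutes within a single row when the box below is empty, and otherwise the relevant change again involves two consecutive rows). The key point is that $R_{row}$ reads row by row from bottom to top, so a slide that alters rows $i$ and $i+1$ changes $R_{row}(S)$ only in the factor corresponding to those two rows: $R_{row}(S) = u \cdot (\text{row}_{i+1})(\text{row}_i) \cdot v$ and $R_{row}(S') = u \cdot (\text{row}_{i+1}')(\text{row}_i') \cdot v$ with $u,v$ unchanged. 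So I only need to compare these two two-row factors under $\sim_{\P(\Si)}$. This is exactly the two small computations displayed just before the lemma, which treat the $2\times 2$ configuration $\begin{ytableau} *(GreenD) & y \\ x & z \end{ytableau}$ and its horizontal counterpart; the general case reduces to these by observing that all the columns of the two rows not involved in the active slide are untouched, and the entries in the columns to the left of the empty box form a common prefix of the two-row readings while those to the right form a common suffix.

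\textbf{Carrying it out.} I would set up the configuration precisely: let the empty box sit at position $(i,j)$, with the box below it $(i+1,j)$ containing $x$ (or empty) and the box to the right $(i,j+1)$ containing $y$ (or empty). For a vertical slide ($x \le y$, equality only if $\|x\|=0$), the entry $x$ moves up; writing the affected portion of rows $i+1$ and $i$ and using that everything strictly left of column $j$ is a shared prefix $p$ and everything strictly right of column $j+1$ in row $i$ together with row $i+1$'s tail is handled by the ambient reading, the change in $R_{row}$ is, up to common prefix/suffix, $(\dots x \dots)(\dots\, ?\, y\dots) \leadsto (\dots\, ?\, \dots)(\dots\, x\, y\dots)$, which after isolating the three letters $x$, (the entry immediately left of $y$ in row $i$, call it $w$, with $w \le x$), and $y$ is precisely an instance of the super Knuth-like relation $wxy = xwy$ or a reading unchanged when $y$ is empty. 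I would record the relevant inequalities — these are exactly the ones proved in Lemma~\ref{L:RectificationTableau} (namely $r_3 < x \le y$ with $x=y$ only if $\|y\|=0$, and the dual) — so that the side conditions in~\eqref{Eq:SuperKnuthRelations} are met. The horizontal slide is handled symmetrically using the second family of relations in~\eqref{Eq:SuperKnuthRelations}, and the degenerate cases where $x$ or $y$ is empty leave the row reading literally unchanged (the empty box simply travels through the skew region).

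\textbf{Main obstacle.} The conceptual content is light — it is really the two displayed computations — but the bookkeeping is the delicate part: one must argue carefully that a single sliding operation perturbs $R_{row}$ only in a localized factor of the form (part of row $i+1$)(part of row $i$), and that the remaining letters of those two rows split cleanly into a common prefix and a common suffix around the three active letters, so that the change is genuinely a single application of~\eqref{Eq:SuperKnuthRelations} (rather than something needing several Knuth moves). Getting the case distinction on whether the box below/right is empty versus occupied — and checking that the $\Zb_2$-degree side conditions transcribed from Lemma~\ref{L:RectificationTableau} are exactly those required by the super Knuth-like relations — is where the care is needed. Once that is in place, closing under transitivity gives the lemma for arbitrary super jeu de taquin equivalent $S$ and $S'$.
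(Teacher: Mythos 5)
Your reduction to a single sliding operation, and your observation that a horizontal slide leaves $R_{row}$ literally unchanged, are both correct and agree with the paper. The gap is in your treatment of a vertical slide: you claim that, after stripping a common prefix and suffix, the change in the reading word is a single application of~\eqref{Eq:SuperKnuthRelations} involving three adjacent letters. This is false. Write the two affected rows as in the paper's proof: the lower row contains $T\,x\,Z$ and the upper row contains $R\,[\,\text{hole}\,]\,Y$, where $T=s_1\ldots s_k$ sits below $R=r_1\ldots r_k$ (the filled entries to the left of the hole) and $Z=z_1\ldots z_l$ sits below $Y=y_1\ldots y_l$ (the entries to the right). Since $R_{row}$ reads the lower row before the upper one, the vertical slide changes the reading from $\ldots T\,x\,Z\,R\,Y\ldots$ to $\ldots T\,Z\,R\,x\,Y\ldots$: the slid letter $x$ jumps over the whole factor $ZR$, of length $k+l$. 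Already for $k=l=1$ the letter moves two places, so no single super Knuth-like move can realize the change; your ``three active letters'' only account for $r_k$, not for all of $Z$ and $R$, and the letters of $Z$ and $R$ are exactly the ones lying between the old and new positions of $x$ in the reading, so they do not split into a common prefix and suffix around a window of length three. (Similarly, the degenerate case where the box to the right of the hole is empty does not leave the reading unchanged: $x$ still jumps over $ZR$.)

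What is missing is precisely the heart of the paper's proof, namely the equivalence $T x Z R Y \sim_{\P(\Si)} T Z R x Y$ of~\eqref{Eq:KnuthequivalenceJeudeTaquin}. The paper establishes it by induction on $k$: for $k=0$ one checks that $xZRY$ and $ZRxY$ have the same insertion tableau and invokes the cross-section property~\eqref{Eq:CrossSectionProperty}; the inductive step peels off the leading letters of $T$ and $R$ via two further equivalences, each again verified by comparing insertion tableaux. Your argument needs a step of this kind (or an equivalent direct appeal to the cross-section property for the entire two-row configuration); the two displayed $2\times 2$ computations preceding the lemma, which you propose to reduce to, only cover the case $k+l\leq 1$. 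The side conditions you transcribe from Lemma~\ref{L:RectificationTableau} are the right local inequalities, but they do not by themselves bridge this gap.
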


\begin{proof}
If~$S$ and~$S'$ are related by a sequence of horizontal sliding operations, then  by definition of the reading map~$R_{row}$, the words~$R_{row}(S)$ and~$R_{row}(S')$ are equal. For a vertical sliding, suppose that the overlapping part of the two rows where the sliding operation occurs has the following form:
\[
\scalebox{0.6}{
\begin{tikzpicture}
\draw  (0,0) -- ++(0,2) --++(3.91,0) --++(0,-1) --++(-1,0)  --++(0,-1) --++(-2.91,0) -- cycle
node[xshift=1cm, yshift=1cm]{$\scalebox{1.5} {t}$}
node[xshift=1.5cm, yshift=2.2cm]{\scalebox{1} {$K$}}
node[xshift=2.2cm, yshift=2.25cm,draw]{$s_1$} 
node[xshift=2.7cm, yshift=2.25cm]{$\cdots$} 
node[xshift=3.2cm, yshift=2.25cm,draw]{$s_k$} 
node[xshift=3.7cm, yshift=2.25cm]{$x$} 
node[xshift=4.2cm, yshift=2.25cm,draw]{$z_1$} 
node[xshift=4.7cm, yshift=2.25cm]{$\cdots$} 
node[xshift=5.2cm, yshift=2.25cm,draw]{$z_l$}
node[xshift=2.2cm, yshift=2.75cm,draw]{$r_1$} 
node[xshift=2.7cm, yshift=2.75cm]{$\cdots$} 
node[xshift=3.2cm, yshift=2.75cm,draw]{$r_k$}  
node[xshift=4.2cm, yshift=2.75cm,draw]{$y_1$} 
node[xshift=4.7cm, yshift=2.75cm]{$\cdots$} 
node[xshift=5.2cm, yshift=2.75cm,draw]{$y_l$}
node[xshift=6cm, yshift=2.75cm]{\scalebox{1} {$L$}} ;
\draw (1,2) -- ++(0,0.5)-- ++(4.48,0)-- ++(0,-0.5)-- ++(-4.48,0)-- cycle;
\draw (2.915,3) -- ++(0,2) --++(6,0) --++(0,-1) --++(-1,0)  --++(0,-1) --++(-5,0)  -- cycle;
\draw (1.93,2.5) -- ++(0,0.5)-- ++(4.5,0)-- ++(0,-0.5)-- ++(-4.5,0)-- cycle
node[xshift=4cm, yshift=1.5cm]{\scalebox{1.5} {$t^\prime$}};
\draw[fill=GreenD]  (3.5,2.5) -- ++(0,0.5) --++(0.4,0) --++(0,-0.5) --++(-0.4,0) -- cycle;
\end{tikzpicture}
}
\quad
\raisebox{1cm}{$\rightarrow$}
\quad
\scalebox{0.6}{
\begin{tikzpicture}
\draw  (0,0) -- ++(0,2) --++(3.91,0) --++(0,-1) --++(-1,0)  --++(0,-1) --++(-2.91,0) -- cycle
node[xshift=1cm, yshift=1cm]{$\scalebox{1.5} {t}$}
node[xshift=1.5cm, yshift=2.2cm]{\scalebox{1} {$K$}}
node[xshift=2.2cm, yshift=2.25cm,draw]{$s_1$} 
node[xshift=2.7cm, yshift=2.25cm]{$\cdots$} 
node[xshift=3.2cm, yshift=2.25cm,draw]{$s_k$} 
node[xshift=3.7cm, yshift=2.75cm]{$x$} 
node[xshift=4.2cm, yshift=2.25cm,draw]{$z_1$} 
node[xshift=4.7cm, yshift=2.25cm]{$\cdots$} 
node[xshift=5.2cm, yshift=2.25cm,draw]{$z_l$}
node[xshift=2.2cm, yshift=2.75cm,draw]{$r_1$} 
node[xshift=2.7cm, yshift=2.75cm]{$\cdots$} 
node[xshift=3.2cm, yshift=2.75cm,draw]{$r_k$}  
node[xshift=4.2cm, yshift=2.75cm,draw]{$y_1$} 
node[xshift=4.7cm, yshift=2.75cm]{$\cdots$} 
node[xshift=5.2cm, yshift=2.75cm,draw]{$y_l$}
node[xshift=6cm, yshift=2.75cm]{\scalebox{1} {$L$}} ;
\draw (1,2) -- ++(0,0.5)-- ++(4.48,0)-- ++(0,-0.5)-- ++(-4.48,0)-- cycle;
\draw (2.915,3) -- ++(0,2) --++(6,0) --++(0,-1) --++(-1,0)  --++(0,-1) --++(-5,0)  -- cycle;
\draw (1.93,2.5) -- ++(0,0.5)-- ++(4.5,0)-- ++(0,-0.5)-- ++(-4.5,0)-- cycle
node[xshift=4cm, yshift=1.5cm]{\scalebox{1.5} {$t^\prime$}};
\draw[fill=GreenD]  (3.5,2) -- ++(0,0.5) --++(0.4,0) --++(0,-0.5) --++(-0.4,0) -- cycle;
\end{tikzpicture}
}
\]
such that~$R=r_1\ldots r_k$,~$T=s_1\ldots s_k$,~$Y=y_1\ldots y_l$,~$Z=z_1\ldots z_l$,~$K$ and~$L$ are rows,~$t$ and~$t'$ are in~$\YoungT(\Si)$, and~$r_i\leq s_i$ (resp.~$y_i\leq z_i$) with~$r_i=s_i$ (resp.~$y_i = z_i$) only if~$||r_i||=1$
(resp.~$||y_i||=1$) for all~\mbox{$1\leq i\leq k$} (resp.~$1\leq i\leq l$). We will show that the words~$R_{row}(t)R_{row}(K)T x ZRYR_{row}(L)R_{row}(t')$ and $R_{row}(t)R_{row}(K)T  ZRxYR_{row}(L)R_{row}(t')$ are super plactic equivalent. Since~$\sim_{\P(\Si)}$ is a congruence, it is sufficient to show that
\begin{equation}
\label{Eq:KnuthequivalenceJeudeTaquin}
T x ZRY
\;
\sim_{\P(\Si)}
\;
T ZR x Y.
\end{equation}
We will show~(\ref{Eq:KnuthequivalenceJeudeTaquin}) by induction on~$k$.
Suppose that~$k=0$. The equality~$C_{\YoungT(\Si)}(x ZRY)= C_{\YoungT(\Si)}(ZR x Y)$ holds in~$\YoungT(\Si)$ showing by the cross-section property~$(\ref{Eq:CrossSectionProperty})$ that the words~$x ZRY$ and~$ZR x Y$ are super plactic equivalent. Suppose that~(\ref{Eq:KnuthequivalenceJeudeTaquin}) holds for~$k-1$ and consider~$R=r_1\rem(R)$ and~$T=t_1\rem(T)$ with~$|\rem(R)| = |\rem(T)|=k-1$. By the induction hypothesis, we have
\begin{equation}
\label{Eq:KnuthequivalenceJeudeTaquin2}
\rem(T) x Z\rem(R)Y
\;
\sim_{\P(\Si)}
\;
\rem(T) Z\rem(R) x Y.
\end{equation}
The equality~$C_{\YoungT(\Si)}(t_1\rem(T)x Zr_1)= C_{\YoungT(\Si)}(t_1r_1\rem(T)x Z)$ holds in~$\YoungT(\Si)$ showing by~$(\ref{Eq:CrossSectionProperty})$ that
\begin{equation}
\label{Eq:KnuthequivalenceJeudeTaquin3}
t_1\rem(T)x Zr_1
\;
\sim_{\P(\Si)}
\;
t_1r_1\rem(T)x Z.
\end{equation}
Similarly, since the equality~$C_{\YoungT(\Si)}(t_1r_1\rem(T)Z)= C_{\YoungT(\Si)}(t_1\rem(T)Zr_1)$ holds in~$\YoungT(\Si)$, we have:
\begin{equation}
\label{Eq:KnuthequivalenceJeudeTaquin4}
t_1r_1\rem(T)Z
\;
\sim_{\P(\Si)}
\;
t_1\rem(T)Zr_1.
\end{equation}
Hence, we obtain the following equivalence
\[
\begin{array}{rl}
T x ZRY\; &=\; t_1\rem(T)x Zr_1\rem(R)Y\\
        &\sim_{\P(\Si)} t_1r_1\rem(T)x Z\rem(R)Y\qquad [ \text{by}~(\ref{Eq:KnuthequivalenceJeudeTaquin3})]\\
&
{\sim_{\P(\Si)}}\;t_1r_1\rem(T) Z\rem(R) x Y\qquad [ \text{by}~(\ref{Eq:KnuthequivalenceJeudeTaquin2})]\\
&
{\sim_{\P(\Si)}}\;t_1\rem(T)Zr_1\rem(R) x Y\qquad [ \text{by}~(\ref{Eq:KnuthequivalenceJeudeTaquin4})]\\
&
=\; TZR x Y
\end{array}
\]
showing the claim.
\end{proof}

\begin{lemma}
\label{L:RectificationUnicity}
 The rectification tableau~$\Rec(S)$ of a given super skew tableau~$S$ is the unique super tableau such that~$R_{row}(S)\sim_{\P(\Si)} R_{row}(\Rec(S))$. Moreover, the following property
\[
\Rec(S) = \Rec(S')\quad \text{ if and only if}\quad R_{row}(S) \sim_{\P(\Si)} R_{row}(S')
\]
holds, for all~$S, S'$  in~$\SkewT(\Si)$. 
\end{lemma}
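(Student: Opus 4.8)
The plan is to reduce the whole statement to the cross-section property~\eqref{Eq:CrossSectionProperty}, using as the two substantive inputs the facts already proved above: that $\Rec(S)$ is a super tableau (Lemma~\ref{L:RectificationTableau}) and that super jeu de taquin equivalent super skew tableaux have super plactic equivalent readings (Lemma~\ref{L:KnuthequivalenceJeudeTaquin}). The auxiliary observation that does the normalising work is that inserting the $R_{row}$-reading of a super tableau recovers it, i.e.\ $C_{\YoungT(\Si)}(R_{row}(t))=t$ for every $t$ in~$\YoungT(\Si)$.

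First I would record the existence part. Since $\Rec(S)$ is obtained from $S$ by a finite sequence of forward sliding operations, the super skew tableaux $S$ and $\Rec(S)$ are super jeu de taquin equivalent, so Lemma~\ref{L:KnuthequivalenceJeudeTaquin} gives $R_{row}(S)\sim_{\P(\Si)}R_{row}(\Rec(S))$; and $\Rec(S)$ is a super tableau by Lemma~\ref{L:RectificationTableau}. For uniqueness, let $t$ be any super tableau with $R_{row}(t)\sim_{\P(\Si)}R_{row}(S)$. By transitivity $R_{row}(t)\sim_{\P(\Si)}R_{row}(\Rec(S))$, so the cross-section property~\eqref{Eq:CrossSectionProperty} yields $C_{\YoungT(\Si)}(R_{row}(t))=C_{\YoungT(\Si)}(R_{row}(\Rec(S)))$. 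As $t$ and $\Rec(S)$ are both super tableaux, the left-hand side equals $t$ and the right-hand side equals $\Rec(S)$, whence $t=\Rec(S)$.

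The ``moreover'' statement then follows by chaining equivalences through the rectification. If $\Rec(S)=\Rec(S')$, then $R_{row}(S)\sim_{\P(\Si)}R_{row}(\Rec(S))=R_{row}(\Rec(S'))\sim_{\P(\Si)}R_{row}(S')$. Conversely, if $R_{row}(S)\sim_{\P(\Si)}R_{row}(S')$, combining with the existence part gives $R_{row}(\Rec(S))\sim_{\P(\Si)}R_{row}(\Rec(S'))$; since $\Rec(S)$ and $\Rec(S')$ are super tableaux, \eqref{Eq:CrossSectionProperty} together with $C_{\YoungT(\Si)}(R_{row}(u))=u$ forces $\Rec(S)=\Rec(S')$. (One could also note that Lemma~\ref{L:GreeneInvariants} is not needed here, although it re-derives the equality of shapes as a by-product.)

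I do not expect a genuine obstacle: all the combinatorial content is packaged into Lemmas~\ref{L:RectificationTableau} and~\ref{L:KnuthequivalenceJeudeTaquin}, and the argument is pure bookkeeping with transitivity of $\sim_{\P(\Si)}$. The only point requiring care is to invoke the cross-section property in the direction ``congruent readings $\Rightarrow$ equal insertion tableaux'' and to use the normalisation $C_{\YoungT(\Si)}(R_{row}(t))=t$ only for genuine super tableaux (which is exactly why Lemma~\ref{L:RectificationTableau} is needed first).
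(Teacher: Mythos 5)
Your proposal is correct and follows essentially the same route as the paper's proof: existence via Lemmas~\ref{L:RectificationTableau} and~\ref{L:KnuthequivalenceJeudeTaquin}, and both uniqueness and the ``moreover'' equivalence via the cross-section property~\eqref{Eq:CrossSectionProperty}. The only cosmetic difference is that you spell out the normalisation $C_{\YoungT(\Si)}(R_{row}(t))=t$ when invoking the cross-section property, which the paper leaves implicit.
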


\begin{proof}
By Lemmata~$\ref{L:RectificationTableau}$ and~$\ref{L:KnuthequivalenceJeudeTaquin}$ the rectification~$\Rec(S)$ of a given super skew tableau~$S$ is a super tableau such that~$R_{row}(S) \sim_{\P(\Si)} R_{row}(\Rec(S))$. Suppose that there exists a super tableau~$t$ that is also the rectification of~$S$. We obtain that~$R_{row}(t) \sim_{\P(\Si)} R_{row}(\Rec(S))$, showing by~(\ref{Eq:CrossSectionProperty}) that~$t=\Rec(S)$. 

Consider now two super skew tableaux~$S$ and~$S'$.
 Suppose  that~$\Rec(S) = \Rec(S')$. We have by Lemma~\ref{L:KnuthequivalenceJeudeTaquin} that~$R_{row}(S) \sim_{\P(\Si)} R_{row}(\Rec(S))$ and~$R_{row}(S') \sim_{\P(\Si)} R_{row}(\Rec(S'))$ showing that the words~$R_{row}(S)$ and~$R_{row}(S')$ are super plactic equivalent. Suppose finally  that~$R_{row}(S) \sim_{\P(\Si)} R_{row}(S')$. We obtain by Lemma~\ref{L:KnuthequivalenceJeudeTaquin} that~$R_{row}(\Rec(S)) \sim_{\P(\Si)} R_{row}(\Rec(S'))$ showing by~(\ref{Eq:CrossSectionProperty}) that~$\Rec(S) = \Rec(S')$.
\end{proof}

As a consequence, we obtain the following result.

\begin{theorem}[Confluence of the super jeu de taquin]
\label{T:ConfluenceJeuDeTaquin}
Starting  with a given super skew tableau, all choices of inner corners lead to the same rectified super tableau.
\end{theorem}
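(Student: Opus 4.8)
The plan is to derive the confluence of the super jeu de taquin directly from Lemma~\ref{L:RectificationUnicity} together with the cross-section property~\eqref{Eq:CrossSectionProperty}, so that essentially no new combinatorial work is needed. The key observation is that the theorem as stated is really about the well-definedness of the map $S\mapsto\Rec(S)$: a priori, applying forward slidings from different inner corners might lead to different super tableaux, and we must show they all coincide.

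Here is how I would organise the argument. Let $S$ be a super skew tableau, and suppose that one sequence of forward slidings, choosing inner corners in some order, terminates in a super tableau $t_1$, while another sequence, choosing inner corners in a possibly different order, terminates in a super tableau $t_2$. By the very definition of the super jeu de taquin, both $t_1$ and $t_2$ are obtained from $S$ by iterated sliding operations, hence $S$ is super jeu de taquin equivalent to both $t_1$ and $t_2$; by transitivity of this equivalence, $t_1$ and $t_2$ are super jeu de taquin equivalent. Applying Lemma~\ref{L:KnuthequivalenceJeudeTaquin}, the words $R_{row}(t_1)$ and $R_{row}(t_2)$ are super plactic equivalent. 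Now $t_1$ and $t_2$ are genuine super tableaux (either by Lemma~\ref{L:RectificationTableau}, or simply because the forward sliding procedure terminates precisely when there are no inner corners left), so $C_{\YoungT(\Si)}(R_{row}(t_i))=t_i$ for $i=1,2$ by the reconstruction property recalled after the definition of $C_{\YoungT(\Si)}$. The cross-section property~\eqref{Eq:CrossSectionProperty} then forces $t_1=C_{\YoungT(\Si)}(R_{row}(t_1))=C_{\YoungT(\Si)}(R_{row}(t_2))=t_2$. Thus all choices of inner corners yield the same rectified super tableau, which is exactly the statement. Equivalently, one can phrase this in a single line: Lemma~\ref{L:RectificationUnicity} asserts that $\Rec(S)$ is \emph{the unique} super tableau whose $R_{row}$-reading is super plactic congruent to $R_{row}(S)$, and since any terminal output of the procedure is such a super tableau, it must equal $\Rec(S)$.

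One subtlety worth making explicit in the write-up is that Lemma~\ref{L:RectificationUnicity} is, as stated in the excerpt, phrased in terms of ``the rectification $\Rec(S)$'', which implicitly presupposes that $\Rec(S)$ is well-defined; so to avoid circularity I would not quote that lemma's uniqueness clause as a black box, but rather re-run its short argument for the two candidate outputs $t_1,t_2$ as above, using only Lemma~\ref{L:KnuthequivalenceJeudeTaquin}, Lemma~\ref{L:RectificationTableau}, and~\eqref{Eq:CrossSectionProperty}, none of which depend on confluence. Given that all the real content has already been established in the preceding lemmata, I do not anticipate a genuine obstacle here; the only thing to be careful about is precisely this circularity point, and the observation that termination of the forward sliding procedure (finitely many boxes, each sliding moving the empty box strictly towards an outer corner) guarantees that there is something to compare in the first place.
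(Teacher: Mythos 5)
Your proof is correct and takes essentially the same route as the paper: there the theorem is stated as an immediate consequence of Lemma~\ref{L:RectificationUnicity}, and the introduction sketches exactly your argument (two terminal super tableaux obtained from different choices of inner corners have super plactic congruent readings by Lemma~\ref{L:KnuthequivalenceJeudeTaquin}, hence coincide by the cross-section property~\eqref{Eq:CrossSectionProperty}). Your explicit handling of the apparent circularity in quoting the uniqueness clause of Lemma~\ref{L:RectificationUnicity} is a sensible clarification rather than a different approach.
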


\subsection{Super jeu de taquin and insertion}
\label{SS:InsertionsSuperJeuDeTaquin}
Given two super  skew tableaux $S$ and~$S'$ in~$\SkewT(\Si)$ of shape $(\lambda_1,\ldots,\lambda_k)/(\lambda'_1,\ldots,\lambda'_{k'})$ and~$(\mu_1,\ldots,\mu_l)/(\mu'_1,\ldots,\mu'_{l'})$ respectively. We will denote by~$[S,S']$ the super skew tableau 
of shape
\[
(\mu_1+\lambda_1,\ldots,\mu_l+\lambda_1, \lambda_1,\ldots,\lambda_{k}) /(\mu'_1+\lambda_1,\ldots, \mu'_{l'}+\lambda_1, \lambda_1,\ldots, \lambda_1, \lambda'_1,\ldots, \lambda'_{k'}),
\]
obtained by concatenating $S'$ over the rightmost column of~$S$ as illustrated in the following diagrams:
\[
\raisebox{-0.7cm}{$S=$}
\raisebox{-0.3cm}{
\scalebox{0.5}{
\ytableausetup{mathmode, smalltableaux, boxsize=1.2em}
\begin{ytableau}
\none& \none & \empty*(Blue)\cdots
&  \empty*(Blue)& \empty*(Blue)\cdots
&  \empty*(Blue) \\
\none & \none& \empty*(Blue)\cdots
&  \empty*(Blue) \\
\empty*(Blue)\vdots &\empty*(Blue)\vdots&\empty*(Blue)\vdots \\
\empty*(Blue) & \empty*(Blue) \\
\empty*(Blue)
\end{ytableau}
}}
\quad
\raisebox{-0.7cm}{$;$}
\quad
\raisebox{-0.7cm}{$S'=$}
\raisebox{-0.3cm}{
\scalebox{0.5}{
\ytableausetup{mathmode, smalltableaux, boxsize=1.2em}
\begin{ytableau}
\none& \none & \empty*(BlueD)\cdots
&  \empty*(BlueD)& \empty*(BlueD)\cdots
&  \empty*(BlueD) \\
\none & \none& \empty*(BlueD)\cdots
&  \empty*(BlueD) \\
\empty*(BlueD)\vdots &\empty*(BlueD)\vdots &\empty*(BlueD)\vdots  \\
\empty*(BlueD) & \empty*(BlueD) \\
\empty*(BlueD)
\end{ytableau}
}}
\quad
\raisebox{-0.7cm}{$;$}
\quad
\raisebox{-0.7cm}{$[S,S']=$}
\scalebox{0.5}{
\ytableausetup{mathmode, smalltableaux, boxsize=1.2em}
\begin{ytableau}
\none&\none&\none&\none&\none&\none&\none& \none & \empty*(BlueD)\cdots
&  \empty*(BlueD)& \empty*(BlueD)\cdots
&  \empty*(BlueD) \\
\none&\none&\none&\none&\none&\none&\none & \none& \empty*(BlueD)\cdots
&  \empty*(BlueD) \\
\none&\none&\none&\none&\none&\none&\empty*(BlueD)\vdots &\empty*(BlueD)\vdots &\empty*(BlueD)\vdots  \\
\none&\none&\none&\none&\none&\none&\empty*(BlueD) & \empty*(BlueD) \\
\none&\none&\none&\none&\none&\none&\empty*(BlueD)\\
\none& \none & \empty*(Blue)\cdots
&  \empty*(Blue)& \empty*(Blue)\cdots
&  \empty*(Blue) \\
\none & \none& \empty*(Blue)\cdots
&  \empty*(Blue) \\
\empty*(Blue)\vdots &\empty*(Blue)\vdots&\empty*(Blue)\vdots \\
\empty*(Blue) & \empty*(Blue) \\
\empty*(Blue)
\end{ytableau}
}
\]
In particular, if $t$ and~$t'$ are super tableaux in~$\YoungT(\Si)$ of shapes~$\lambda=(\lambda_1,\ldots,\lambda_k)$ and~$\mu=(\mu_1,\ldots,\mu_l)$ respectively, then the super skew tableau~$[t,t']$ is 
of shape
\[
(\mu_1+\lambda_1,\ldots,\mu_l+\lambda_1, \lambda_1,\ldots,\lambda_k)/(\lambda_1,\ldots,\lambda_1).
\]

Define the \emph{insertion product}~$\star_{\SkewT(\Si)}: \SkewT(\Si)\times \SkewT(\Si)\to \YoungT(\Si)$  by setting 
\[
S \star_{\SkewT(\Si)} S' :=  (\emptyset\insr{} R_{row}(S) R_{row}(S'))
\]
for all $S,S'$ in $\SkewT(\Si)$.

\begin{proposition}
\label{P:AssociativityInsertionProductSkew}
For all~$S$ and~$S'$ in~$\SkewT(\Si)$, the following equality
\begin{equation}
\label{Eq:JeuDeTaquinInsertion}
S\star_{\SkewT(\Si)} S' = \Rec([S,S'])
\end{equation}
holds in~$\YoungT(\Si)$. In particular, the insertion product~$\star_{\SkewT(\Si)}$ is associative. 
\end{proposition}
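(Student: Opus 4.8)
The plan is to reduce the identity~\eqref{Eq:JeuDeTaquinInsertion} to the uniqueness of the rectification established in Lemma~\ref{L:RectificationUnicity}. The key observation is that both sides of~\eqref{Eq:JeuDeTaquinInsertion} are super tableaux, so it suffices to show that their $R_{row}$-readings are super plactic equivalent. For the left-hand side, by definition $S\star_{\SkewT(\Si)} S' = (\emptyset \insr{} R_{row}(S)R_{row}(S')) = C_{\YoungT(\Si)}(R_{row}(S)R_{row}(S'))$, and since for any word $w$ the words $w$ and $R_{row}(C_{\YoungT(\Si)}(w))$ are super plactic equivalent, we get $R_{row}(S\star_{\SkewT(\Si)} S') \sim_{\P(\Si)} R_{row}(S)R_{row}(S')$. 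For the right-hand side, Lemma~\ref{L:KnuthequivalenceJeudeTaquin} (via Lemma~\ref{L:RectificationUnicity}) gives $R_{row}(\Rec([S,S'])) \sim_{\P(\Si)} R_{row}([S,S'])$. So everything comes down to computing $R_{row}([S,S'])$ and checking it is super plactic equivalent to $R_{row}(S)R_{row}(S')$.

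First I would unwind the definition of the reading map on the concatenated skew tableau $[S,S']$. The frame of $[S,S']$ places $S'$ strictly above and strictly to the right of $S$ (shifted right by $\lambda_1$ columns, which is at least the width of every row of $S$, and placed in rows above all rows of $S$). Reading $[S,S']$ row-wise from bottom to top and left to right therefore reads off all the rows of $S$ first (in exactly the order they appear in $R_{row}(S)$, since the horizontal shift does not change the left-to-right order within rows nor the bottom-to-top order of rows), and then all the rows of $S'$ (likewise in the order giving $R_{row}(S')$). Hence $R_{row}([S,S']) = R_{row}(S)R_{row}(S')$ \emph{on the nose}, with no congruence needed. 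This is the only place where one genuinely uses the precise geometry of the $[S,S']$ construction — in particular the fact that the shift is by $\lambda_1$, guaranteeing the two pieces do not share any column, so no row of $[S,S']$ mixes entries of $S$ and $S'$.

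Combining the three displayed equivalences yields $R_{row}(S\star_{\SkewT(\Si)} S') \sim_{\P(\Si)} R_{row}(S)R_{row}(S') = R_{row}([S,S']) \sim_{\P(\Si)} R_{row}(\Rec([S,S']))$. Since both $S\star_{\SkewT(\Si)} S'$ and $\Rec([S,S'])$ are super tableaux (the former by $C_{\YoungT(\Si)}(w)\in\YoungT(\Si)$, the latter by Lemma~\ref{L:RectificationTableau}), the cross-section property~\eqref{Eq:CrossSectionProperty}, or equivalently the uniqueness clause of Lemma~\ref{L:RectificationUnicity}, forces $S\star_{\SkewT(\Si)} S' = \Rec([S,S'])$, which is~\eqref{Eq:JeuDeTaquinInsertion}.

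For associativity of $\star_{\SkewT(\Si)}$, restricted to $\YoungT(\Si)$ it is immediate from the associativity of $\star_{\YoungT(\Si)}$ recalled earlier (and $\star_{\YoungT(\Si)}$ agrees with $\star_{\SkewT(\Si)}$ on tableaux since $C_{\YoungT(\Si)}(R_{row}(t))=t$). In general, given $S,S',S''$, one computes $R_{row}((S\star_{\SkewT(\Si)} S')\star_{\SkewT(\Si)} S'')$ and $R_{row}(S\star_{\SkewT(\Si)}(S'\star_{\SkewT(\Si)} S''))$; using the congruence $R_{row}(\emptyset\insr{} w)\sim_{\P(\Si)} w$ repeatedly, both reduce to $R_{row}(S)R_{row}(S')R_{row}(S'')$ up to $\sim_{\P(\Si)}$, and the cross-section property finishes it. The main obstacle — really the only nontrivial point — is the careful verification that the reading of $[S,S']$ is exactly the concatenation $R_{row}(S)R_{row}(S')$; everything else is bookkeeping on top of Lemmata~\ref{L:RectificationTableau}, \ref{L:KnuthequivalenceJeudeTaquin}, \ref{L:RectificationUnicity} and the cross-section property.
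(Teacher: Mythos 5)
Your proof is correct and follows essentially the same route as the paper: both sides of~\eqref{Eq:JeuDeTaquinInsertion} are super tableaux whose $R_{row}$-readings are super plactic equivalent (using $R_{row}([S,S'])=R_{row}(S)R_{row}(S')$ together with Lemma~\ref{L:KnuthequivalenceJeudeTaquin}), so the cross-section property~\eqref{Eq:CrossSectionProperty} forces equality, exactly as in the paper's argument. The only difference is cosmetic: for associativity the paper invokes Theorem~\ref{T:ConfluenceJeuDeTaquin} via $\Rec([[S,S'],S''])$, whereas you rerun the cross-section argument on the readings of the triple products, which works just as well.
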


\begin{proof}
By definition of~$\star_{\SkewT(\Si)}$, the following equivalence~$R_{row}(S)R_{row}(S') \sim_{\P(\Si)} R_{row}(S\star_{\SkewT(\Si)} S')$  holds. Moreover, by Lemma~\ref{L:KnuthequivalenceJeudeTaquin}, the words $R_{row}([S,S']) = R_{row}(S)R_{row}(S')$ and $R_{row}(\Rec([S,S']))$ are super plactic equivalent,  showing that~$R_{row}(S\star_{\SkewT(\Si)} S')$ and~$R_{row}(\Rec([S,S']))$ are so. 
Hence, the equality~(\ref{Eq:JeuDeTaquinInsertion}) holds by the cross-section property~(\ref{Eq:CrossSectionProperty}).
In particular, by Theorem~\ref{T:ConfluenceJeuDeTaquin},  the following equality
\[
(S\star_{\SkewT(\Si)} S') \star_{\SkewT(\Si)} S'' 
\:=\: 
\Rec([[S , S' ],S'']) 
\:=\: 
S \star_{\SkewT(\Si)} (S' \star_{\SkewT(\Si)} S'')
\]
holds in~$\YoungT(\Si)$,  for all~$S,S',S''\in \SkewT(\Si)$,  showing that~$\star_{\SkewT(\Si)}$ is associative.
\end{proof}

By restriction of the insertion product on the set~$\YoungT(\Si)\times \YoungT(\Si)$, we deduce the following result.

\begin{corollary}
\label{C:AssociativityInsertionProductTableau}
For all~$t$ and~$t'$ in~$\YoungT(\Si)$, the following equality
\[
t\star_{\YoungT(\Si)} t' \;=\; \Rec([t,t'])
\]
holds in~$\YoungT(\Si)$, and  the insertion product~$\star_{\YoungT(\Si)}$ is associative. 
\end{corollary}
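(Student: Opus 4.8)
The plan is to recognize the product $\star_{\YoungT(\Si)}$ as the restriction of $\star_{\SkewT(\Si)}$ to pairs of genuine super tableaux, and then to read off both assertions from Proposition~\ref{P:AssociativityInsertionProductSkew}. First I would recall that, under the identification of $\YoungT(\Si)$ with the set of super skew tableaux of shape $\lambda/0$, every super tableau is in particular an element of $\SkewT(\Si)$, and that the reading map $R_{row}$ on $\YoungT(\Si)$ is the one already fixed in Subsection~\ref{SS:InsertionsOnSuperTableaux}.

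The key computation is that the two insertion products agree on tableaux. Indeed, for $t,t'$ in $\YoungT(\Si)$,
\[
t \star_{\YoungT(\Si)} t' \;=\; (t\insr{} R_{row}(t')) \;=\; (C_{\YoungT(\Si)}(R_{row}(t))\insr{} R_{row}(t')) \;=\; (\emptyset\insr{} R_{row}(t)R_{row}(t')) \;=\; t\star_{\SkewT(\Si)} t',
\]
where the second equality uses the identity $C_{\YoungT(\Si)}(R_{row}(t))=t$ recalled in Subsection~\ref{SS:InsertionsOnSuperTableaux} and the third unfolds the iterated right insertion defining $C_{\YoungT(\Si)}$. Applying Proposition~\ref{P:AssociativityInsertionProductSkew} to the pair $(t,t')\in\SkewT(\Si)\times\SkewT(\Si)$ then gives $t\star_{\YoungT(\Si)} t' = \Rec([t,t'])$, which is the first claim.

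For associativity I would observe that $t\star_{\YoungT(\Si)} t'$ is by construction the outcome of the right insertion algorithm, hence an element of $\YoungT(\Si)$; thus $\YoungT(\Si)$ is closed under $\star_{\SkewT(\Si)}$ and the restriction of $\star_{\SkewT(\Si)}$ to $\YoungT(\Si)\times\YoungT(\Si)$ is exactly $\star_{\YoungT(\Si)}$. Associativity of $\star_{\YoungT(\Si)}$ then follows from that of $\star_{\SkewT(\Si)}$: for $t,t',t''$ in $\YoungT(\Si)$,
\[
(t\star_{\YoungT(\Si)} t')\star_{\YoungT(\Si)} t'' = (t\star_{\SkewT(\Si)} t')\star_{\SkewT(\Si)} t'' = t\star_{\SkewT(\Si)} (t'\star_{\SkewT(\Si)} t'') = t\star_{\YoungT(\Si)}(t'\star_{\YoungT(\Si)} t'').
\]

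I do not expect a genuine obstacle here: the corollary is a formal consequence of Proposition~\ref{P:AssociativityInsertionProductSkew} once the two insertion products are seen to coincide on tableaux, and that coincidence is the single displayed line above, resting only on $C_{\YoungT(\Si)}(R_{row}(t))=t$. The one point deserving a word of care is that $\star_{\SkewT(\Si)}$ takes values in $\YoungT(\Si)$ rather than in $\SkewT(\Si)$, so that composing it with itself makes sense only after noting that $t\star_{\YoungT(\Si)} t'$ is already a super tableau — which is precisely the observation used in the associativity step.
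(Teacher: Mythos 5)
Your proposal is correct and follows the paper's own route: the paper deduces this corollary precisely "by restriction of the insertion product" to $\YoungT(\Si)\times\YoungT(\Si)$, i.e., from Proposition~\ref{P:AssociativityInsertionProductSkew}, and your displayed computation (via $C_{\YoungT(\Si)}(R_{row}(t))=t$) merely makes explicit the coincidence of $\star_{\YoungT(\Si)}$ with the restricted $\star_{\SkewT(\Si)}$ that the paper leaves implicit. No gap.
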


\subsubsection{Example}
The insertion
$\big(\raisebox{0.1cm}{\ytableausetup{smalltableaux} 
\begin{ytableau}
1& 2 &2&3\\
1 & 3&4 \\
3
\end{ytableau}}\insr{} 2 \big)$ computed in~(\ref{Eq:RightInsertionExample}) is  obtained  as follows: 
\[
\ytableausetup{smalltableaux}
\begin{ytableau}
\none& \none&\none& \empty *(GreenD) &2\\
1& 2 &2&3\\
1 & 3&4 \\
3
\end{ytableau}
\qquad
\raisebox{-0.5cm}{$\rightarrow$}
\quad
\begin{ytableau}
\none& \none&\empty *(BlueD)& 2\\
1& 2 &2&3\\
1 & 3&4 \\
3
\end{ytableau}
\qquad
\raisebox{-0.5cm}{$\rightarrow$}
\quad
\begin{ytableau}
\none&\empty *(YellowD) &2& 2\\
1& 2 &3\\
1 & 3&4 \\
3
\end{ytableau}
\qquad
\raisebox{-0.5cm}{$\rightarrow$}
\quad
\begin{ytableau}
\empty *(RedD)& 2 &2& 2\\
1 &3&4 \\
1 & 3\\
3
\end{ytableau}
\qquad
\raisebox{-0.5cm}{$\rightarrow$}
\quad
\begin{ytableau}
1 & 2 &2& 2\\
1 &3&4 \\
3\\
3
\end{ytableau}
\]
The  insertion
$\big(1 \insl{} \raisebox{0.1cm}{\ytableausetup{smalltableaux}
\begin{ytableau}
1& 2 &5&6\\
1 & 4&5 \\
2
\end{ytableau}}\,\big)$ computed in~(\ref{Eq:LeftInsertionExample}) is also recovered as follows by taquin:
\[
\ytableausetup{smalltableaux}
\begin{ytableau}
\none&1& 2 &5&6\\
\none&1 & 4&5 \\
\empty *(GreenD)&2\\
1
\end{ytableau}
\qquad
\raisebox{-0.5cm}{$\rightarrow$}
\quad
\ytableausetup{smalltableaux}
\begin{ytableau}
\none&1& 2 &5&6\\
\empty *(BlueD)&1 & 4&5 \\
1&2
\end{ytableau}
\qquad
\raisebox{-0.5cm}{$\rightarrow$}
\quad
\ytableausetup{smalltableaux}
\begin{ytableau}
\empty *(YellowD)&1& 2 &5&6\\
1&2 & 4&5 \\
1
\end{ytableau}
\qquad
\raisebox{-0.5cm}{$\rightarrow$}
\quad
\ytableausetup{smalltableaux}
\begin{ytableau}
1&2& 2 &5&6\\
1& 4&5 \\
1
\end{ytableau}
\]

\subsection{Column reading on super tableaux}
Denote by~$R_{col}:\SkewT(\Si)\to \Si^\ast$ the map that reads a super tableau column-wise from bottom to top and from left to right. For instance, the~$R_{col}$-reading of the super  tableau~$t$ presented in~(\ref{Ex:tableauExample}) is~$R_{col}(t) = 55314142$.

\begin{proposition}
Let~$S$ be in~$\SkewT(\Si)$. The words $R_{col}(S)$ and~$R_{row}(S)$ are super  plactic equivalent.
\end{proposition}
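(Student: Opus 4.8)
The plan is to reduce the statement, via the column decomposition of $S$, to a single jeu de taquin equivalence and then invoke Lemma~\ref{L:KnuthequivalenceJeudeTaquin}. Let $\lambda/\mu$ be the shape of $S$ and let $D_1,\dots,D_m$ be its successive columns, each regarded as a one-column super tableau in $\YoungT(\Si)$ (the $j$-th column content placed in rows $1,2,\dots$). Since reading a one-column super tableau by $R_{row}$ simply reads its column from bottom to top, the definitions of the two reading maps give
\[
R_{col}(S)\;=\;R_{row}(D_1)\,R_{row}(D_2)\cdots R_{row}(D_m).
\]
Now set $T_1:=D_1$ and $T_j:=[T_{j-1},D_j]$ for $2\leq j\leq m$, and put $\hat S:=T_m\in\SkewT(\Si)$; this is the skew tableau obtained by stacking the columns of $S$ in a staircase. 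Using the identity $R_{row}([T,T'])=R_{row}(T)R_{row}(T')$ recorded in the proof of Proposition~\ref{P:AssociativityInsertionProductSkew}, an induction on $j$ yields $R_{row}(T_j)=R_{row}(D_1)\cdots R_{row}(D_j)$, hence $R_{row}(\hat S)=R_{col}(S)$. So it is enough to show that $\hat S$ and $S$ are super jeu de taquin equivalent: then Lemma~\ref{L:KnuthequivalenceJeudeTaquin} gives $R_{col}(S)=R_{row}(\hat S)\sim_{\P(\Si)}R_{row}(S)$, which is the claim.

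The real work is to produce an explicit sequence of sliding operations carrying $\hat S$ to $S$, and this is the step I expect to be the main obstacle. I would induct on $m$. Writing $S^{<}$ for the super skew tableau formed by the first $m-1$ columns of $S$, the induction hypothesis furnishes slides taking $T_{m-1}$ to $S^{<}$; since inside $\hat S=[T_{m-1},D_m]$ the column $D_m$ sits strictly to the right of $T_{m-1}$, these same slides apply verbatim without ever disturbing $D_m$ and transform $\hat S$ into $[S^{<},D_m]$. It then remains to slide $[S^{<},D_m]$ into $S$, that is, to lower the raised column $D_m$ until it occupies its correct vertical position alongside $S^{<}$. This is achieved by performing, from the bottom upward, one forward sliding at each inner corner of the empty column separating $D_m$ from $S^{<}$: at every stage the box moving up is the current top box of a bottom segment of $D_m$, pitted against an entry of column $m-1$ of $S$, and the inequality making the move a legal vertical slide is exactly the condition that the rows of $S$ are weakly increasing with repetitions only of letters of $\Si_0$. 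The bookkeeping — tracking which entry of $D_m$ meets which entry of $S^{<}$ as the column descends, together with the case split according to whether the relevant letters lie in $\Si_0$ or $\Si_1$ — is what must be carried out carefully; it is the super analogue of the classical fact that the columns of a tableau laid out along a diagonal slide back to that tableau.

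Once $\hat S\sim S$ has been established, Lemma~\ref{L:KnuthequivalenceJeudeTaquin} gives that $R_{row}(\hat S)$ and $R_{row}(S)$ are super plactic equivalent, and since $R_{row}(\hat S)=R_{col}(S)$ this is precisely $R_{col}(S)\sim_{\P(\Si)}R_{row}(S)$. Alternatively the final step may be run through insertion: by Proposition~\ref{P:AssociativityInsertionProductSkew}, Corollary~\ref{C:AssociativityInsertionProductTableau} and the associativity of $\star_{\YoungT(\Si)}$ one has $C_{\YoungT(\Si)}(R_{col}(S))=D_1\star_{\YoungT(\Si)}\cdots\star_{\YoungT(\Si)}D_m=\Rec(\hat S)$, while $C_{\YoungT(\Si)}(R_{row}(S))=\Rec(S)$ (combine $S\sim\Rec(S)$, Lemma~\ref{L:RectificationTableau} and the cross-section property~\eqref{Eq:CrossSectionProperty}); from $\hat S\sim S$ one gets $\Rec(\hat S)=\Rec(S)$ by Lemma~\ref{L:RectificationUnicity}, and~\eqref{Eq:CrossSectionProperty} then yields the equivalence. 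Specializing to $S=t$ a super tableau, one recovers in passing the identity $C_{\YoungT(\Si)}(R_{col}(t))=t$.
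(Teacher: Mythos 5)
Your proposal follows the paper's own proof essentially step for step: the paper also decomposes $S$ into its columns $c_1,\dots,c_k$, forms the staircase $[[\dots[c_1,c_2],\dots],c_k]$, observes that its row reading is $R_{row}(c_1)\cdots R_{row}(c_k)=R_{col}(S)$, asserts that this staircase and $S$ are super jeu de taquin equivalent, and concludes with Lemma~\ref{L:KnuthequivalenceJeudeTaquin}; your alternative ending through Proposition~\ref{P:AssociativityInsertionProductSkew}, Lemma~\ref{L:RectificationUnicity} and the cross-section property~\eqref{Eq:CrossSectionProperty} is a correct repackaging of the same idea. So the reduction and the key lemma are exactly the paper's, and the paper spends only one sentence on the sliding step that you also identify as the real work.

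That sliding step is, however, the one place where the detail you do supply is wrong. Forward sliding operations only move filled boxes up or to the left, so you cannot ``lower the raised column $D_m$'': with the embedding fixed by the definition of $[\,\cdot\,,\cdot\,]$, the cells of $D_m$ inside $[S^{<},D_m]$ already coincide with the cells of column $m$ of $S$ (when the top row of $S$ is nonempty), and what must happen is that the columns of $S^{<}$ slide up past the stationary $D_m$, rightmost column first --- this is what the paper means by ``moving the columns to the top starting from $c_{k-1}$''. Hence the boxes that move are entries of $S^{<}$, not ``the current top box of a bottom segment of $D_m$''; the empty region separating the two pieces is an $h_m\times(m-1)$ rectangle rather than a single column, so each column of $S^{<}$ needs $h_m$ slides; and until the last slide of each column the entries being compared are not row-adjacent in $S$, so weak row-increase alone does not legitimize the vertical moves --- one must chain the column condition of $S$ with the row condition and check the $\Zb_2$-degrees to exclude the forbidden equalities (if $x=\U(i,j)=\U(i',j)=\U(i',j+1)$ with $i<i'$ then $||x||$ would have to be both $1$ and $0$). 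There is also a degenerate caveat affecting both your text and the paper's: for disconnected shapes with empty rows the staircase can sit above $S$, so the slides may have to run from $S$ to the staircase; this is harmless (plactic equivalence is symmetric, or delete empty rows first), but it shows the recipe cannot be taken literally as stated. In short, the skeleton is the paper's and the target claim is true, but your sketched slide mechanism would need to be redone along these lines before it counts as a proof of the jeu de taquin equivalence $\hat S\sim S$.
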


\begin{proof}
Suppose that the super skew tableau~$S$ consists of~$k$ columns~$c_1,\ldots, c_k$.
By Theorem~\ref{T:ConfluenceJeuDeTaquin}, this super skew tableau can be obtained from~$[[\ldots[[c_1,c_2],c_3],\ldots],c_k]$ by applying a sequence of forward sliding that moves the columns to the top starting from~$c_{k-1}$ and ending by~$c_1$. 
Hence, we obtain by Lemma~\ref{L:KnuthequivalenceJeudeTaquin} the following equivalence:
\[
\begin{array}{rl}
R_{row}(S)& \sim_{\P(\Si)} R_{row}([[\ldots[[c_1,c_2],c_3],\ldots],c_k])\\
                &= R_{row}(c_1)\ldots R_{row}(c_k)\\
                & = R_{col}(c_1)\ldots R_{col}(c_k) \\
                &= R_{col}(S)
\end{array}
\]
showing the claim.
\end{proof}

\section{The super evacuation procedure}
\label{S:SuperJeuDeTaquinSuperEvacuation}

We introduce in this section the super evacuation procedure using the super jeu de taquin and we show its compatibility with the super plactic congruence. We follow for this aim, the approach given in~\cite{Fulton97} for the non-signed case.

\subsection{Super opposite alphabet}
\label{SS:SuperOppositeAlphabet}

Denote by~$\Si^{\mathrm{op}}$ the \emph{opposite alphabet} obtained from~$\Si$ by reversing its order. Denote by~$x^\ast$ the letter in~$\Si^{\mathrm{op}}$ corresponding to $x$ in~$\Si$ where~$||x^\ast||=0$ (resp.~$||x^\ast||=1$) if~$||x||=0$ (resp.~$||x||=1$). For all~$x, y$ in~$\Si$, we have~$x<y$ if and only if~$x^\ast>y^\ast$. For any word~$w=x_1\ldots x_k$ over~$\Si$, denote by~$w^\ast=x_k^\ast\ldots x_1^\ast$ the corresponding word over~$\Si^{\mathrm{op}}$. 
Then, for all~$v$ and~$w$ in~$\Si^\ast$, the equality~$(vw)^\ast = w^\ast v^\ast$ holds, inducing an anti-isomorphism between the free monoids over~$\Si^{\mathrm{op}}$ and~$\Si$.
Moreover, by identifying~$(\Si^{\mathrm{op}})^{\mathrm{op}}$ with~$\Si$,  the equality~$(w^\ast)^\ast = w$ holds, for any~$w$ in~$\Si^\ast$.

\begin{lemma}
\label{L:DualTableauxGreenePlactic}
Let~$v$ and~$w$ be words in~$\Si^\ast$. The following properties hold:
\begin{enumerate}[\bf i)]
\item $v\sim_{\P(\Si)} w$ if, and only, if~$v^\ast\sim_{\P(\Si^{\mathrm{op}})} w^\ast$.
\item $l_k(v)= l_k(v^\ast)$, for any~$k\geq 0$.
\end{enumerate}
\end{lemma}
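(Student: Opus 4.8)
The plan is to prove the two statements in parallel, reducing both to the super Knuth-like relations~\eqref{Eq:SuperKnuthRelations} and the super analogue of Greene's invariants recalled in Subsection~\ref{SS:SuperPlacticMonoid}. The starting observation is that the map~$x\mapsto x^\ast$ reverses the order on~$\Si$ but \emph{preserves} the~$\Zb_2$-grading, so a row (resp.\ column) over~$\Si$ becomes a column (resp.\ row) over~$\Si^{\mathrm{op}}$ once the order of its letters is reversed; more precisely, if~$x_1\ldots x_k$ is a row over~$\Si$ then~$x_k^\ast\ldots x_1^\ast$ is a row over~$\Si^{\mathrm{op}}$ (one checks this directly from the definitions of ``row'' and~$\ast$: $x_i\le x_{i+1}$ with equality only if~$\|x_i\|=0$ translates into~$x_{i+1}^\ast\le x_i^\ast$ with equality only if~$\|x_{i+1}^\ast\|=0$, which is exactly the row condition for the reversed word), and symmetrically for columns.

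For part~\textbf{i)}, since~$\sim_{\P(\Si)}$ is generated by the relations~\eqref{Eq:SuperKnuthRelations} and~$v\mapsto v^\ast$ is an anti-automorphism of free monoids (as noted in Subsection~\ref{SS:SuperOppositeAlphabet}, $(vw)^\ast=w^\ast v^\ast$ and~$(w^\ast)^\ast=w$), it suffices to show that each defining relation of~$\P(\Si)$ is carried by~$\ast$ to a defining relation of~$\P(\Si^{\mathrm{op}})$. Concretely, I would take the first family~$xzy=zxy$ with~$x\le y\le z$, $x=y$ only if~$y\in\Si_0$, $y=z$ only if~$y\in\Si_1$, apply~$\ast$ to both sides to get~$y^\ast z^\ast x^\ast = y^\ast x^\ast z^\ast$, and then verify that if we set~$x'=z^\ast$, $y'=y^\ast$, $z'=x^\ast$ we have~$x'\le y'\le z'$ (because~$x\le y\le z$ reverses), with~$x'=y'$ (i.e.\ $z^\ast=y^\ast$, i.e.\ $z=y$) only if~$y\in\Si_1$ only if~$y^\ast=y'\in\Si_1$, and~$y'=z'$ (i.e.\ $y=x$) only if~$y\in\Si_0$ only if~$y'\in\Si_0$. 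Hence~$y^\ast z^\ast x^\ast = y^\ast x^\ast z^\ast$ is an instance of the \emph{second} family of super Knuth-like relations over~$\Si^{\mathrm{op}}$. By the obvious symmetry, the second family over~$\Si$ maps to the first family over~$\Si^{\mathrm{op}}$. This shows the relation~$v^\ast\sim_{\P(\Si^{\mathrm{op}})}w^\ast$ whenever~$v\sim_{\P(\Si)}w$; applying this implication to~$v^\ast,w^\ast$ over~$\Si^{\mathrm{op}}$ and using~$(\cdot^\ast)^\ast=\mathrm{id}$ gives the converse, so part~\textbf{i)} is an equivalence.

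For part~\textbf{ii)}, I would argue purely combinatorially from the definition of~$l_k$. Let~$v=x_1\ldots x_n$. A~$k$-tuple of pairwise disjoint subsequences of~$v$, each of which is a row over~$\Si$, uses positions~$i^{(1)}_1<\dots<i^{(1)}_{a_1}$, \dots, $i^{(k)}_1<\dots<i^{(k)}_{a_k}$; reading these positions inside~$v^\ast=x_n^\ast\ldots x_1^\ast$ means applying~$j\mapsto n+1-j$, which reverses each increasing position-sequence and, as observed above, turns each row of~$v$ into a row of~$v^\ast$, while preserving disjointness and the total length~$a_1+\dots+a_k$. This gives~$l_k(v)\le l_k(v^\ast)$, and applying the same construction to~$v^\ast$ (together with~$(v^\ast)^\ast=v$) gives the reverse inequality, hence equality. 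Alternatively, if one prefers not to manipulate subsequences directly, one can invoke the identification~$l_k(v)=\lambda_1+\dots+\lambda_k$ where~$\lambda$ is the shape of~$C_{\YoungT(\Si)}(v)$, combined with part~\textbf{i)} and Lemma~\ref{L:GreeneInvariants}, but this only shows~$l_k$ depends on~$v$ through its~$\sim_{\P(\Si)}$-class and through the shape; to pin it to~$l_k(v^\ast)$ one still needs the combinatorial correspondence or the forthcoming duality-of-shapes statement, so I would prefer the direct subsequence argument here.

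The main obstacle, such as it is, is bookkeeping rather than conceptual: one must be scrupulously careful with the ``$x=y$ only if~$y\in\Si_0$'' side conditions under the order-reversal, because it is exactly there that the two families of super Knuth-like relations get swapped, and an off-by-one in which of~$x,y,z$ plays which role would break the argument. Everything else is a routine transport of structure along the anti-isomorphism~$w\mapsto w^\ast$.
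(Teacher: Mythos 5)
Your proposal is correct and follows essentially the same route as the paper: part \textbf{i)} by transporting each super Knuth-like relation through the anti-isomorphism $w\mapsto w^\ast$ (the two families swapping, with the parity side conditions checked exactly as the paper does), and part \textbf{ii)} by sending a disjoint family of row subsequences of $v$ to the family of mirrored rows inside $v^\ast$. Your version merely spells out the relabelling $x'=z^\ast$, $y'=y^\ast$, $z'=x^\ast$ and the two-way inequality $l_k(v)\leq l_k(v^\ast)$ more explicitly than the paper's one-line statements.
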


\begin{proof}
Consider two words~$v$ and~$w$  in~$\Si^\ast$ such that~$v\sim_{\P(\Si)} w$. Suppose that~$v=pxzyq$ and~$w=pzxyq$ for all~$p,q\in\Si^\ast$ such that~$x\leq y\leq z$ in~$\Si$ with~$x=y$  only if  $||y|| = 0$  and~$y=z$ only if~$||y|| = 1$. We have~$v^\ast=q^{\ast}y^{\ast}z^{\ast}x^{\ast}p^{\ast}$ and~$w^{\ast}= q^{\ast}y^{\ast}x^{\ast}z^{\ast}p^{\ast}$ with~$z^{\ast}\leq y^{\ast}\leq x^{\ast}$ and~$x^{\ast}=y^{\ast}$  only if  $||y^{\ast}|| = 0$  and~$y^{\ast}=z^{\ast}$ only if~$||y^{\ast}|| = 1$, showing that~$v^\ast\sim_{\P(\Si^{\mathrm{op}})} w^\ast$. Suppose now that~$v=pyxzq$ and~$w=pyzxq$ for all~$p, q\in\Si^\ast$ such that~$x\leq y\leq z$ in~$\Si$ with~$x=y$  only if  $||y|| = 1$  and~$y=z$ only if~$||y|| = 0$. We have~$v^\ast=q^{\ast}z^{\ast}x^{\ast}y^{\ast}p^{\ast}$ and~$w^{\ast}= q^{\ast}x^{\ast}z^{\ast}y^{\ast}p^{\ast}$ with~$z^{\ast}\leq y^{\ast}\leq x^{\ast}$ and~$x^{\ast}=y^{\ast}$  only if  $||y^{\ast}|| = 1$  and~$y^{\ast}=z^{\ast}$ only if~$||y^{\ast}|| = 0$, showing that~$v^\ast\sim_{\P(\Si^{\mathrm{op}})} w^\ast$. Similarly, we show that if~$v^\ast\sim_{\P(\Si^{\mathrm{op}})} w^\ast$, then~$v\sim_{\P(\Si)} w$. 
Finally, for any~$k\geq 0$, any disjoint family of~$k$ rows that are subsequences of a word~$v$ over~$\Si$ gives by considering the mirror words of the rows in~$\Si^{\mathrm{op}}$, a family of~$k$ rows that are subsequences of~$v^\ast$, showing that~$l_k(v)= l_k(v^\ast)$.
 \end{proof}
 
\subsection{Involution on super tableaux}
\label{SS:InvolutionOnSuperTableaux}

Let~$t$ be in~$\YoungT(\Si)$.  An \emph{opposite tableau} in~$\YoungT(\Si^{\mathrm{op}})$ is constructed from~$t$ using the super jeu de taquin as follows. Consider an empty Young diagram with the same frame as~$t$. 
Remove the box containing the top-leftmost element~$x$ in~$t$, and perform a forward sliding on the resulted super skew tableau. 
We obtain a super tableau, denoted by~$t^\ast$, whose frame has one box removed from the frame of $t$. Put the letter $x^{\ast}$ in the initial empty Young diagram at the same place of the box that was removed from the frame of~$t$. Repeat the algorithm on~$t^\ast$ and continue until all the elements of~$t$ have been removed and the initial empty Young diagram has been filled by their corresponding letters in~$\Si^{\mathrm{op}}$. The result is denoted by~$t^{\mathrm{op}}$, and the procedure of construction~$t^{\mathrm{op}}$ from~$t$ is called the \emph{super evacuation}. This is the super analogue of the Schützenberger's evacuation procedure,~\cite{Schutzenberger63}.
For instance, consider the super tableau~$t$ presented in~(\ref{Ex:tableauExample}) and compute~$t^{\mathrm{op}}$.
\[
\raisebox{-0.45cm}{$t\;
=
\;$}
\ytableausetup{smalltableaux}
\begin{ytableau}
1& 1&2 \\
3&4&4\\
5\\
5
\end{ytableau}
\raisebox{-0.45cm}{$\quad ; \quad$}
\raisebox{-0.45cm}{$\big(\;$}
\ytableausetup{smalltableaux}
\begin{ytableau}
*(GreenD)& 1&2 \\
3&4&4\\
5\\
5
\end{ytableau}
\raisebox{-0.45cm}{$\;,\;$}
\raisebox{-0.45cm}{$t^\ast = \;$}
\begin{ytableau}
1&2&4 \\
3&4\\
5\\
5
\end{ytableau}
\raisebox{-0.45cm}{$\;,\;$}
\begin{ytableau}
\empty& \empty&\empty \\
\empty&\empty&1^\ast\\
\empty&\none\\
\empty&\none
\end{ytableau}
\raisebox{-0.45cm}{$\;\big)$}
\raisebox{-0.45cm}{$\quad ; \quad$}
\raisebox{-0.45cm}{$\big(\;$}
\ytableausetup{smalltableaux}
\begin{ytableau}
*(GreenD)&2&4 \\
3&4\\
5\\
5
\end{ytableau}
\raisebox{-0.45cm}{$\;,\;$}
\begin{ytableau}
2&4&4 \\
3\\
5\\
5
\end{ytableau}
\raisebox{-0.45cm}{$\;,\;$}
\begin{ytableau}
\empty& \empty&\empty \\
\empty&1^\ast&1^\ast\\
\empty&\none\\
\empty&\none
\end{ytableau}
\raisebox{-0.45cm}{$\;\big)$}
\raisebox{-0.35cm}{$\; ; $}
\]
\[
\raisebox{-0.45cm}{$\big(\;$}
\ytableausetup{smalltableaux}
\begin{ytableau}
*(GreenD)&4&4 \\
3\\
5\\
5
\end{ytableau}
\raisebox{-0.45cm}{$\;,\;$}
\begin{ytableau}
3&4&4 \\
5\\
5
\end{ytableau}
\raisebox{-0.45cm}{$\;,\;$}
\begin{ytableau}
\empty& \empty&\empty \\
\empty&1^\ast&1^\ast\\
\empty&\none\\
2^\ast
\end{ytableau}
\raisebox{-0.45cm}{$\;\big)$}
\raisebox{-0.45cm}{$\quad ; \quad$}
\raisebox{-0.35cm}{$\big(\;$}
\ytableausetup{smalltableaux}
\begin{ytableau}
*(GreenD)&4&4 \\
5\\
5
\end{ytableau}
\raisebox{-0.35cm}{$\;,\;$}
\begin{ytableau}
4&4 \\
5\\
5
\end{ytableau}
\raisebox{-0.35cm}{$\;,\;$}
\begin{ytableau}
\empty& \empty&3^\ast \\
\empty&1^\ast&1^\ast\\
\empty&\none\\
2^\ast
\end{ytableau}
\raisebox{-0.35cm}{$\;\big)$}
\raisebox{-0.35cm}{$\quad ; \quad$}
\raisebox{-0.35cm}{$\big(\;$}
\ytableausetup{smalltableaux}
\begin{ytableau}
*(GreenD)&4 \\
5\\
5
\end{ytableau}
\raisebox{-0.35cm}{$\;,\;$}
\begin{ytableau}
4\\
5\\
5
\end{ytableau}
\raisebox{-0.35cm}{$\;,\;$}
\begin{ytableau}
\empty& 4^\ast&3^\ast \\
\empty&1^\ast&1^\ast\\
\empty&\none\\
2^\ast
\end{ytableau}
\raisebox{-0.35cm}{$\;\big)$}
\raisebox{-0.35cm}{$\; ; $}
\]
\[
\raisebox{-0.35cm}{$\big(\;$}
\ytableausetup{smalltableaux}
\begin{ytableau}
*(GreenD) \\
5\\
5
\end{ytableau}
\raisebox{-0.35cm}{$\;,\;$}
\begin{ytableau}
5\\
5
\end{ytableau}
\raisebox{-0.35cm}{$\;,\;$}
\begin{ytableau}
\empty& 4^\ast&3^\ast \\
\empty&1^\ast&1^\ast\\
 4^\ast\\
2^\ast
\end{ytableau}
\raisebox{-0.35cm}{$\;\big)$}
\raisebox{-0.35cm}{$\quad ; \quad$}
\raisebox{-0.35cm}{$\big(\;$}
\raisebox{-0.35cm}{$\ytableausetup{smalltableaux}
\begin{ytableau}
*(GreenD) \\
5
\end{ytableau}$}
\raisebox{-0.35cm}{$\;,\;$}
\raisebox{-0.35cm}{$\begin{ytableau}
5
\end{ytableau}$}
\raisebox{-0.35cm}{$\;,\;$}
\begin{ytableau}
\empty& 4^\ast&3^\ast \\
5^\ast&1^\ast&1^\ast\\
 4^\ast\\
2^\ast
\end{ytableau}
\raisebox{-0.35cm}{$\;\big)$}
\raisebox{-0.45cm}{$\quad ; \quad$}
\raisebox{-0.35cm}{$\big(\;$}
\raisebox{-0.35cm}{$\ytableausetup{smalltableaux}
\begin{ytableau}
5
\end{ytableau}$}
\raisebox{-0.35cm}{$\;,\;$}
\raisebox{-0.35cm}{$\emptyset$}
\raisebox{-0.35cm}{$\;,\;$}
\begin{ytableau}
5^\ast& 4^\ast&3^\ast \\
5^\ast&1^\ast&1^\ast\\
 4^\ast\\
2^\ast
\end{ytableau}
\raisebox{-0.35cm}{$\;=\; t^{\mathrm{op}}\;\big).$}
\]

Let~$t$ be in~$\YoungT(\Si)$. Define~$t^{\wedge}$ to be the unique super tableau in~$\YoungT(\Si^{\mathrm{op}})$ such that the words $(R_{row}(t))^{\ast}$ and~$R_{row}(t^{\wedge})$ are congruent with respect to~$\sim_{\P(\Si^{\mathrm{op}})}$.

\begin{lemma}
\label{L:OppositeTableauLemma}
Let~$t$ be in~$\YoungT(\Si)$. The following properties hold:
\begin{enumerate}[\bf i)]
\item The map~$t\mapsto t^{\wedge}$ is an involution on~$\YoungT(\Si)$.
\item The super tableau~$t^{\wedge}$ has the same shape as~$t$.
\end{enumerate}
\end{lemma}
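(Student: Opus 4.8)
The plan is to first make precise that~$t^{\wedge}$ is well defined and to identify it with an insertion tableau. By the cross-section property~\eqref{Eq:CrossSectionProperty}, applied to the signed alphabet~$\Si^{\mathrm{op}}$, the set~$\YoungT(\Si^{\mathrm{op}})$ contains exactly one super tableau whose~$R_{row}$-reading is~$\sim_{\P(\Si^{\mathrm{op}})}$-congruent to a given word over~$\Si^{\mathrm{op}}$; hence~$t^{\wedge}$ exists, is unique, and equals~$C_{\YoungT(\Si^{\mathrm{op}})}\bigl((R_{row}(t))^{\ast}\bigr)$. Throughout, I identify~$(\Si^{\mathrm{op}})^{\mathrm{op}}$ with~$\Si$, so that applying the construction twice yields an element of~$\YoungT(\Si)$ and the assertion ``$t\mapsto t^{\wedge}$ is an involution on~$\YoungT(\Si)$'' means~$(t^{\wedge})^{\wedge}=t$.

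For~{\bf i)}, start from~$R_{row}(t^{\wedge})\sim_{\P(\Si^{\mathrm{op}})}(R_{row}(t))^{\ast}$, which holds by definition. Applying Lemma~\ref{L:DualTableauxGreenePlactic}~{\bf i)} with~$\Si$ replaced by~$\Si^{\mathrm{op}}$ (legitimate, as~$\Si^{\mathrm{op}}$ is again a signed alphabet) and using the identity~$(w^{\ast})^{\ast}=w$, this becomes~$(R_{row}(t^{\wedge}))^{\ast}\sim_{\P(\Si)}R_{row}(t)$. Now~$(t^{\wedge})^{\wedge}$ is, by definition, the unique super tableau in~$\YoungT(\Si)$ whose reading is~$\sim_{\P(\Si)}$-congruent to~$(R_{row}(t^{\wedge}))^{\ast}$; since~$t$ is a super tableau enjoying exactly this property, the uniqueness clause---equivalently, the cross-section property~\eqref{Eq:CrossSectionProperty} together with~$C_{\YoungT(\Si)}(R_{row}(t))=t$---gives~$(t^{\wedge})^{\wedge}=t$.

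For~{\bf ii)}, write~$\lambda$ for the shape of~$t$ and~$\mu$ for the shape of~$t^{\wedge}$. Because~$C_{\YoungT(\Si)}(R_{row}(t))=t$ and~$C_{\YoungT(\Si^{\mathrm{op}})}\bigl((R_{row}(t))^{\ast}\bigr)=t^{\wedge}$, the description of the shape of an insertion tableau in terms of Greene's row invariants recalled just before Lemma~\ref{L:GreeneInvariants} (valid over any signed alphabet) yields~$l_{j}(R_{row}(t))=\lambda_{1}+\cdots+\lambda_{j}$ and~$l_{j}\bigl((R_{row}(t))^{\ast}\bigr)=\mu_{1}+\cdots+\mu_{j}$ for all~$j\geq 0$. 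By Lemma~\ref{L:DualTableauxGreenePlactic}~{\bf ii)} we have~$l_{j}\bigl((R_{row}(t))^{\ast}\bigr)=l_{j}(R_{row}(t))$, so~$\lambda_{1}+\cdots+\lambda_{j}=\mu_{1}+\cdots+\mu_{j}$ for every~$j$, and therefore~$\lambda=\mu$.

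The proof is a matter of chaining together the cross-section property~\eqref{Eq:CrossSectionProperty}, Lemma~\ref{L:DualTableauxGreenePlactic}, and the Greene-invariant formula for shapes; no genuinely new combinatorics is needed. The one point that demands attention is the bookkeeping of the passage between~$\Si$ and~$\Si^{\mathrm{op}}$ and of the double-dual identification~$(\Si^{\mathrm{op}})^{\mathrm{op}}=\Si$, so that each quoted fact---originally stated for a generic signed alphabet---is invoked on the correct alphabet.
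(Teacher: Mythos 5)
Your proof is correct and follows essentially the same route as the paper: part~\textbf{i)} by combining Lemma~\ref{L:DualTableauxGreenePlactic}~\textbf{i)}, the identity $(w^{\ast})^{\ast}=w$, and the cross-section property~\eqref{Eq:CrossSectionProperty}, and part~\textbf{ii)} by matching Greene row invariants via Lemma~\ref{L:DualTableauxGreenePlactic}~\textbf{ii)}. The only (harmless) difference is that you invoke the Greene-invariant shape formula on each alphabet directly rather than citing Lemma~\ref{L:GreeneInvariants}, which if anything handles the $\Si$ versus $\Si^{\mathrm{op}}$ bookkeeping more explicitly.
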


\begin{proof}
Consider~$t$ in~$\YoungT(\Si)$. By definition, the super tableau~$(t^{\wedge})^{\wedge}$ is the unique super tableau in~$\YoungT(\Si)$ such that~$(R_{row}(t^{\wedge}))^{\ast}\sim_{\P(\Si)}R_{row}\big( (t^{\wedge})^{\wedge}\big)$. Since the equality~$(w^\ast)^\ast=w$ holds for any~$w$ in~$\Si^\ast$, we obtain by Lemma~\ref{L:DualTableauxGreenePlactic} that~$R_{row}(t^{\wedge})\sim_{\P(\Si^{\mathrm{op}})}\big(R_{row}\big((t^{\wedge})^{\wedge}\big)\big)^\ast$ and then~$(R_{row}(t))^\ast \sim_{\P(\Si^{\mathrm{op}})}\big(R_{row}\big( (t^{\wedge})^{\wedge}\big)\big)^\ast$. Hence, by Lemma~\ref{L:DualTableauxGreenePlactic}, the equivalence~$R_{row}(t)\sim_{\P(\Si)}R_{row}\big( (t^{\wedge})^{\wedge}\big)$ holds, showing by the cross-section property~\eqref{Eq:CrossSectionProperty}, that the map~$t\mapsto t^{\wedge}$ is an involution.
Moreover, since the words~$(R_{row}(t))^{\ast}$ and~$R_{row}(t^{\wedge})$ are congruent with respect to~$\sim_{\P(\Si^{\mathrm{op}})}$, the following equality~$l_k(R_{row}(t^{\wedge})) = l_k((R_{row}(t))^{\ast})$ holds, for any~$k$. Following Lemma~\ref{L:DualTableauxGreenePlactic}, the equality
\[
l_k(R_{row}(t^{\wedge})) = l_k(R_{row}(t))
\] holds for any~$k$, showing by Lemma~\ref{L:GreeneInvariants} that the super tableaux~$t$ and~$t^{\wedge}$ have the same shape.
\end{proof}

\begin{lemma}
\label{L:OppositeTableauLemma2}
Let~$t$ be in~$\YoungT(\Si)$. The super tableaux~$t^{\wedge}$ and~$t^{\mathrm{op}}$ are equal.
\end{lemma}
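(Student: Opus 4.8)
The plan is to reduce the assertion to a statement about reading words and then prove it by induction on the number of boxes. By its definition and the cross-section property~\eqref{Eq:CrossSectionProperty}, $t^{\wedge}$ is the unique super tableau over~$\Si^{\mathrm{op}}$ whose $R_{row}$-reading is super plactic equivalent to~$(R_{row}(t))^{\ast}$; hence it suffices to prove the identity~$C_{\YoungT(\Si^{\mathrm{op}})}\big((R_{row}(t))^{\ast}\big)=t^{\mathrm{op}}$, which at once yields that~$t^{\mathrm{op}}$ is a super tableau and that~$R_{row}(t^{\mathrm{op}})\sim_{\P(\Si^{\mathrm{op}})}(R_{row}(t))^{\ast}$. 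I would prove this by induction on the number~$n$ of boxes of~$t$; the cases~$n\leq 1$ are immediate.

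For the inductive step, let~$\lambda$ be the shape of~$t$ and let~$x=\T(1,1)$ be its top-leftmost entry; since the rows and columns of~$t$ are weakly increasing, $x$ is the minimum of all entries of~$t$, so~$x^{\ast}$ is maximal among all letters occurring in~$u^{\ast}$, $v^{\ast}$ and~$(t^{\ast})^{\mathrm{op}}$ (notation as below). Let~$S$ be the super skew tableau obtained by emptying the box~$(1,1)$ of~$t$, put~$t^{\ast}=\Rec(S)$, and let~$c$ be the unique outer corner of~$\lambda$ such that~$t^{\ast}$ has shape~$\lambda\setminus\{c\}$. By the very definition of the super evacuation — its first step records~$x^{\ast}$ in the box~$c$, and its remaining steps evacuate~$t^{\ast}$ into the same diagram — the filling~$t^{\mathrm{op}}$ restricts to~$(t^{\ast})^{\mathrm{op}}$ on~$\Yr(\lambda)\setminus\{c\}$ and carries~$x^{\ast}$ in the box~$c$. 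By the induction hypothesis, $(t^{\ast})^{\mathrm{op}}$ is a super tableau over~$\Si^{\mathrm{op}}$ with~$R_{row}((t^{\ast})^{\mathrm{op}})\sim_{\P(\Si^{\mathrm{op}})}(R_{row}(t^{\ast}))^{\ast}$. Now write~$R_{row}(t)=u\,x\,v$ with~$xv$ the top row of~$t$ and~$u$ the reading of the rows below; then~$R_{row}(S)=uv$, so~$R_{row}(t^{\ast})\sim_{\P(\Si)}uv$ by Lemma~\ref{L:RectificationUnicity}, so~$(R_{row}(t^{\ast}))^{\ast}\sim_{\P(\Si^{\mathrm{op}})}v^{\ast}u^{\ast}$ by Lemma~\ref{L:DualTableauxGreenePlactic}, and therefore~$C_{\YoungT(\Si^{\mathrm{op}})}(v^{\ast}u^{\ast})=(t^{\ast})^{\mathrm{op}}$ by the cross-section property. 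Since~$(R_{row}(t))^{\ast}=v^{\ast}x^{\ast}u^{\ast}$, what remains to prove is the identity
\[
C_{\YoungT(\Si^{\mathrm{op}})}\big(v^{\ast}\,x^{\ast}\,u^{\ast}\big)\;=\;(t^{\ast})^{\mathrm{op}}\cup\{\,x^{\ast}\text{ in the box }c\,\}.
\]

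This last identity I would establish through the left insertion~$\insl{}$. We have~$C_{\YoungT(\Si^{\mathrm{op}})}(v^{\ast}x^{\ast}u^{\ast})=v^{\ast}\insl{}\big(x^{\ast}\insl{}C_{\YoungT(\Si^{\mathrm{op}})}(u^{\ast})\big)$; because~$x^{\ast}$ is maximal, $x^{\ast}\insl{}C_{\YoungT(\Si^{\mathrm{op}})}(u^{\ast})$ is nothing but~$C_{\YoungT(\Si^{\mathrm{op}})}(u^{\ast})$ with~$x^{\ast}$ planted at the foot of its first column — a short verification, using that a maximal \emph{even} letter cannot occur strictly below the first row of~$t$, hence does not occur in~$u^{\ast}$. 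Since left inserting~$v^{\ast}$ into~$C_{\YoungT(\Si^{\mathrm{op}})}(u^{\ast})$ already returns~$(t^{\ast})^{\mathrm{op}}$, the claim becomes: left inserting~$v^{\ast}$ into the same tableau with~$x^{\ast}$ planted at the foot of the first column returns~$(t^{\ast})^{\mathrm{op}}$ with the planted letter~$x^{\ast}$ displaced exactly to the box~$c$. This is the crux, and the main obstacle: one has to show that the bumping route followed by the maximal planted letter is precisely the mirror of the jeu-de-taquin path traced by the forward slide~$S\mapsto t^{\ast}$. The cleanest way to organize this is to prove, once and for all, that left insertion by the reading of a single column intertwines — under the map~$w\mapsto w^{\ast}$ on reading words — with reverse sliding, i.e. the forward/reverse sliding duality already recorded in~\S\ref{SS:ReverseSlidings}, and then to iterate it column by column over~$v^{\ast}$. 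Granting the crux, the displayed identity holds, hence~$C_{\YoungT(\Si^{\mathrm{op}})}((R_{row}(t))^{\ast})=t^{\mathrm{op}}$, which closes the induction and gives~$t^{\mathrm{op}}=t^{\wedge}$.
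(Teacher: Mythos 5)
Your reduction to the identity $C_{\YoungT(\Si^{\mathrm{op}})}\big((R_{row}(t))^{\ast}\big)=t^{\mathrm{op}}$, the induction on the number of boxes, the decomposition $R_{row}(t)=u\,x\,v$, and the check that $x^{\ast}\insl{}C_{\YoungT(\Si^{\mathrm{op}})}(u^{\ast})$ merely plants the maximal letter at the foot of the first column are all sound, and up to that point your argument runs parallel to the paper's. But the proof stops exactly at the decisive step: the ``crux'' --- that left-inserting $v^{\ast}$ afterwards displaces the planted letter $x^{\ast}$ \emph{precisely} to the outer corner $c$ vacated by the forward slide $S\mapsto t^{\ast}$ --- is asserted, not proved. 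The intertwining statement you propose to base it on (left insertion by a column reading corresponds, under $w\mapsto w^{\ast}$, to reverse sliding) is not available in the paper: Subsection~\ref{SS:ReverseSlidings} only records that a reverse slide undoes a single forward slide, and establishing your intertwining claim would require a bumping-path analysis essentially as long as the lemma itself. So, as written, the argument has a genuine gap at its main step.

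The paper's proof shows that the exact landing box never needs to be tracked. By Lemma~\ref{L:OppositeTableauLemma}, $t^{\wedge}$ has the same shape as $t$, hence the same shape $\lambda$ as $t^{\mathrm{op}}$, while $(t^{\ast})^{\wedge}=(t^{\ast})^{\mathrm{op}}$ (induction hypothesis) has shape $\lambda\setminus\{c\}$. It therefore suffices to know that $t^{\wedge}$ agrees with $(t^{\ast})^{\wedge}$ up to one additional box containing $x^{\ast}$: the shapes then force that box to sit at $c$, and $t^{\wedge}=t^{\mathrm{op}}$ follows. Concretely, you could repair your proof by weakening your displayed identity to ``$(t^{\ast})^{\mathrm{op}}$ with $x^{\ast}$ in \emph{some} box'' and adding this shape comparison; that weaker statement only requires observing (via the equivalences $R_{row}(t^{\wedge})\sim_{\P(\Si^{\mathrm{op}})}v^{\ast}x^{\ast}u^{\ast}$ and $R_{row}((t^{\ast})^{\wedge})\sim_{\P(\Si^{\mathrm{op}})}v^{\ast}u^{\ast}$, together with the cross-section property) that the extra maximal letter contributes a single extra box without perturbing the insertion of the remaining letters --- a much lighter burden than the exact-position claim you set out to prove.
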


\begin{proof}
Consider a super tableau~$t$ in~$\YoungT(\Si)$. By construction, the super tableaux~$t$ and~$t^{op}$ have the same shape. Then by Lemma~\ref{L:OppositeTableauLemma} the super tableaux~$t^{\wedge}$ and~$t^{\mathrm{op}}$ are so. We will show by induction on the number of boxes of~$t$ that~$t^{\wedge}=t^{\mathrm{op}}$. This equality holds if~$t$ consists of one box. Suppose that this property is true for super tableaux consisting of~$k-1$ boxes and show it when~$t$ contains~$k$ boxes. Suppose~$t$ has the following form:
\[
\raisebox{0.5cm}{$t\;=\;$}
\scalebox{0.6}{
\begin{tikzpicture}
\draw (0,0) -- ++(0,2) --++(2,0) --++(0,-1) --++(-1,0)  --++(0,-1) --++(-1,0)  -- cycle
node[xshift=0.7cm, yshift=1.4cm]{\scalebox{1.5}{$t'$}}
node[xshift=1.3cm, yshift=2.2cm]{$r$}
node[xshift=0.23cm, yshift=2.2cm,draw]{$x$};
\draw (0.46,2) -- ++(0,0.4) --++(2,0) --++(0,-0.4) --++(-2,0)  -- cycle;
\end{tikzpicture}
}
\]
Note that the super tableau~$t^\ast$ contains one box less than~$t$, and the super tableau~$t^{\mathrm{op}}$ is obtained from~$(t^\ast)^{\mathrm{op}}$ by putting~$x^\ast$ at the same place of the box that was removed from the frame of~$t$. By the induction hypothesis, the super tableaux~$(t^\ast)^{\wedge}$ and~$(t^\ast)^{\mathrm{op}}$ are equal. Since the super tableaux~$t^{\wedge}$ and~$t^{\mathrm{op}}$ have the same shape,  it is sufficient to show that the super tableau~$t^{\wedge}$ can be obtained from~$(t^\ast)^{\wedge}$ by adding a box containing~$x^\ast$ in some place in this super tableau. By construction of~$t^\ast$, the following equivalence~$R_{row}(t^\ast)\sim_{\P(\Si)} R_{row}(t')R_{row}(r)$ holds, showing that~$R_{row}((t^\ast)^{\wedge})\sim_{\P(\Si^{\mathrm{op}})} (R_{row}(r))^\ast(R_{row}(t'))^\ast$. Moreover,  the following equivalence
\[
R_{row}(t^{\wedge})\sim_{\P(\Si^{\mathrm{op}})} (R_{row}(r))^\ast x^\ast(R_{row}(t'))^\ast
\]
 holds. Hence, following the cross-section property~\eqref{Eq:CrossSectionProperty}, when computing the super tableau~$t^{\wedge}$ by insertion we obtain the same super tableau as~$(t^\ast)^{\wedge}$ but with~$x^\ast$ in some box, showing the claim.
\end{proof}

As a consequence of Lemmata~\ref{L:OppositeTableauLemma} and~\ref{L:OppositeTableauLemma2}, we obtain the following result.

\begin{theorem}
\label{T:DualityTableauTheorem}
Let~$t$ be in~$\YoungT(\Si)$. The following properties hold:
\begin{enumerate}[\bf i)]
\item The super tableau~$t^{\mathrm{op}}$ has the same shape as~$t$.\item The words~$(R_{row}(t))^{\ast}$ and $R_{row}(t^{\mathrm{op}})$ are congruent with respect to~$\sim_{\P(\Si^{\mathrm{op}})}$.
\item The map~$t\mapsto  t^{\mathrm{op}}$ is an involution on~$\YoungT(\Si)$.
\end{enumerate}
\end{theorem}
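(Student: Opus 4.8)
The plan is to derive all three statements directly from Lemmata~\ref{L:OppositeTableauLemma} and~\ref{L:OppositeTableauLemma2}, which together identify the combinatorially-defined evacuation $t\mapsto t^{\mathrm{op}}$ with the congruence-defined map $t\mapsto t^{\wedge}$. The first move is to record that, by Lemma~\ref{L:OppositeTableauLemma2}, $t^{\mathrm{op}}=t^{\wedge}$ for every super tableau $t$ in~$\YoungT(\Si)$; this single identification then transports each of the three desired properties from $t^{\wedge}$ to $t^{\mathrm{op}}$.

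For~(i), I would invoke Lemma~\ref{L:OppositeTableauLemma}(ii), which asserts that $t^{\wedge}$ has the same shape as~$t$; combined with $t^{\mathrm{op}}=t^{\wedge}$ this gives that $t^{\mathrm{op}}$ has the same shape as~$t$. (One could also see this straight from the construction of~$t^{\mathrm{op}}$, since each forward sliding removes exactly one box from the current frame and that box is immediately recorded in the output diagram, so the frame being filled ends up equal to the frame of~$t$; but citing the lemma is cleaner.) For~(ii), I would simply unwind the definition of~$t^{\wedge}$: by that definition the words $(R_{row}(t))^{\ast}$ and $R_{row}(t^{\wedge})$ are congruent with respect to $\sim_{\P(\Si^{\mathrm{op}})}$, and substituting $t^{\wedge}=t^{\mathrm{op}}$ yields that $(R_{row}(t))^{\ast}$ and $R_{row}(t^{\mathrm{op}})$ are congruent with respect to $\sim_{\P(\Si^{\mathrm{op}})}$.

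For~(iii), I would use Lemma~\ref{L:OppositeTableauLemma}(i), which says the map $t\mapsto t^{\wedge}$ is an involution, i.e. $(t^{\wedge})^{\wedge}=t$ after the canonical identification of $(\Si^{\mathrm{op}})^{\mathrm{op}}$ with~$\Si$. Applying Lemma~\ref{L:OppositeTableauLemma2} twice --- once to~$t$ over~$\Si$ and once to $t^{\wedge}=t^{\mathrm{op}}$ over~$\Si^{\mathrm{op}}$ --- gives $(t^{\mathrm{op}})^{\mathrm{op}}=(t^{\wedge})^{\wedge}=t$, so $t\mapsto t^{\mathrm{op}}$ is an involution on~$\YoungT(\Si)$.

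There is essentially no obstacle remaining at this level: all the genuine content lies upstream, in Lemma~\ref{L:OppositeTableauLemma2} (the coincidence of the two descriptions of evacuation, obtained by induction on the number of boxes via the cross-section property) and in Lemma~\ref{L:OppositeTableauLemma} (involutivity and shape-invariance of $t\mapsto t^{\wedge}$, via Lemma~\ref{L:DualTableauxGreenePlactic} and the Greene-type invariants of Lemma~\ref{L:GreeneInvariants}). The only point that calls for a little care is the bookkeeping in~(iii): one must be explicit about applying the identification $t\mapsto t^{\wedge}$, respectively $t\mapsto t^{\mathrm{op}}$, in both directions between $\YoungT(\Si)$ and $\YoungT(\Si^{\mathrm{op}})$, using $(\Si^{\mathrm{op}})^{\mathrm{op}}\cong\Si$, so that the composite of the two evacuation steps is a well-defined endomap of $\YoungT(\Si)$ whose being the identity is exactly what is asserted.
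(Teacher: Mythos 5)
Your proposal is correct and follows exactly the paper's route: the theorem is stated there as an immediate consequence of Lemma~\ref{L:OppositeTableauLemma} and Lemma~\ref{L:OppositeTableauLemma2}, i.e.\ transporting the shape-preservation and involutivity of $t\mapsto t^{\wedge}$, together with the defining congruence $(R_{row}(t))^{\ast}\sim_{\P(\Si^{\mathrm{op}})}R_{row}(t^{\wedge})$, through the identification $t^{\wedge}=t^{\mathrm{op}}$. Your extra care in~(iii) about applying the identification over both $\Si$ and $\Si^{\mathrm{op}}$ is a sound (and slightly more explicit) rendering of what the paper leaves implicit.
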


Finally, we deduce the following result.

\begin{corollary}
Let~$t$ be in~$\YoungT(\Si)$ and~$x_1,\ldots,x_k$ be in~$\Si$. The following equality
\begin{equation}
\label{E:DualityRelations}
(x_k \insl{} (  \ldots\insl{}  (x_1\insl{} t)\ldots))
\;
=
\;
\big(((\ldots(t^{\mathrm{op}} \insr{} x_1^{\ast})  \insr{} \ldots )\insr{} x_k^{\ast})\big)^{\mathrm{op}}
\end{equation}
holds in~$\YoungT(\Si)$. 
\end{corollary}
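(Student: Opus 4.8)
The plan is to reduce both sides of~\eqref{E:DualityRelations} to $R_{row}$-readings, match those readings up to the super plactic congruence using Theorem~\ref{T:DualityTableauTheorem} and Lemma~\ref{L:DualTableauxGreenePlactic}, and conclude with the cross-section property~\eqref{Eq:CrossSectionProperty}.

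First I would rewrite the left-hand side. Writing $T$ for $(x_k \insl{} (\ldots \insl{} (x_1\insl{} t)\ldots))$ and using $C_{\YoungT(\Si)}(R_{row}(t)) = t$, it follows directly from the definition of the left insertion that left-inserting $x_1, x_2,\ldots, x_k$ in this order into $t$ produces $C_{\YoungT(\Si)}$ of the corresponding concatenation, that is
\[
T \;=\; C_{\YoungT(\Si)}(x_k x_{k-1}\cdots x_1\, R_{row}(t)),
\]
so that $R_{row}(T) \sim_{\P(\Si)} x_k x_{k-1}\cdots x_1\, R_{row}(t)$. Likewise, writing $S$ for $((\ldots(t^{\mathrm{op}}\insr{} x_1^\ast)\insr{}\ldots)\insr{} x_k^\ast)$ and using $t^{\mathrm{op}} = C_{\YoungT(\Si^{\mathrm{op}})}(R_{row}(t^{\mathrm{op}}))$, the definition of the right insertion gives $S = C_{\YoungT(\Si^{\mathrm{op}})}(R_{row}(t^{\mathrm{op}})\, x_1^\ast\cdots x_k^\ast)$, hence $R_{row}(S) \sim_{\P(\Si^{\mathrm{op}})} R_{row}(t^{\mathrm{op}})\, x_1^\ast\cdots x_k^\ast$.

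Next I would compute $R_{row}(S^{\mathrm{op}})$. Applying Theorem~\ref{T:DualityTableauTheorem}(ii) to the super tableau $S$ over the alphabet $\Si^{\mathrm{op}}$ and identifying $(\Si^{\mathrm{op}})^{\mathrm{op}}$ with $\Si$ yields $R_{row}(S^{\mathrm{op}}) \sim_{\P(\Si)} (R_{row}(S))^\ast$. Applying the $\ast$-operation to the equivalence $R_{row}(S) \sim_{\P(\Si^{\mathrm{op}})} R_{row}(t^{\mathrm{op}})\, x_1^\ast\cdots x_k^\ast$ and using Lemma~\ref{L:DualTableauxGreenePlactic}(i) together with $(vw)^\ast = w^\ast v^\ast$ and $(x_i^\ast)^\ast = x_i$ gives
\[
(R_{row}(S))^\ast \;\sim_{\P(\Si)}\; x_k x_{k-1}\cdots x_1\, (R_{row}(t^{\mathrm{op}}))^\ast.
\]
Finally, Theorem~\ref{T:DualityTableauTheorem}(ii) applied to $t$ over $\Si$ gives $(R_{row}(t))^\ast \sim_{\P(\Si^{\mathrm{op}})} R_{row}(t^{\mathrm{op}})$, and applying $\ast$ once more (Lemma~\ref{L:DualTableauxGreenePlactic}(i)) yields $(R_{row}(t^{\mathrm{op}}))^\ast \sim_{\P(\Si)} R_{row}(t)$. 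Chaining the three equivalences gives
\[
R_{row}(S^{\mathrm{op}}) \;\sim_{\P(\Si)}\; x_k x_{k-1}\cdots x_1\, R_{row}(t) \;\sim_{\P(\Si)}\; R_{row}(T),
\]
so by the cross-section property~\eqref{Eq:CrossSectionProperty}, $C_{\YoungT(\Si)}(R_{row}(S^{\mathrm{op}})) = C_{\YoungT(\Si)}(R_{row}(T))$; since $C_{\YoungT(\Si)}(R_{row}(u)) = u$ for any super tableau $u$, this is exactly $S^{\mathrm{op}} = T$, i.e.~\eqref{E:DualityRelations}.

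The argument is mostly bookkeeping, and the part that needs the most care — the only real obstacle — is to keep the order reversals induced by the $\ast$-operation straight and to track, at each step, whether the plactic equivalence is being read over $\Si$ or over $\Si^{\mathrm{op}}$; this is where the interplay of Theorem~\ref{T:DualityTableauTheorem}(ii) with Lemma~\ref{L:DualTableauxGreenePlactic}(i) and with the identification $(\Si^{\mathrm{op}})^{\mathrm{op}} \cong \Si$ does all the work. It is also worth checking explicitly that the two ``iterated insertion equals $C$ of a concatenation'' identities used above hold by the very definition of the insertion schemes, so that the only substantive inputs are Theorem~\ref{T:DualityTableauTheorem} and the cross-section property~\eqref{Eq:CrossSectionProperty}.
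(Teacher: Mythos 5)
Your proof is correct and follows essentially the same route as the paper's: both reduce the two sides to $R_{row}$-readings, chain the plactic equivalences $R_{row}(S^{\mathrm{op}})\sim_{\P(\Si)}(R_{row}(S))^{\ast}\sim_{\P(\Si)}x_k\cdots x_1\,R_{row}(t)\sim_{\P(\Si)}R_{row}(T)$ via Theorem~\ref{T:DualityTableauTheorem} and Lemma~\ref{L:DualTableauxGreenePlactic}, and conclude with the cross-section property~\eqref{Eq:CrossSectionProperty}. Your version just carries out the $\ast$-flip to $\Si$ a step earlier and spells out the ``iterated insertion equals $C_{\YoungT(\Si)}$ of a concatenation'' identities (which rest on the facts from Section~\ref{SS:SuperPlacticMonoid}, not merely on the definition of insertion), but the substance is the same.
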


\begin{proof}
Consider~$t$ in~$\YoungT(\Si)$ and~$x_1,\ldots, x_k$ in~$\Si$.
The following equivalence holds:
\[
\begin{array}{rl}
R_{row}(((\ldots(t^{\mathrm{op}} \insr{} x_1^{\ast})  \insr{} \ldots )\insr{} x_k^{\ast}))&\sim_{\P(\Si^{\mathrm{op}})} R_{row}(t^{\mathrm{op}})x_1^{\ast}\ldots x_k^{\ast}\\
&\sim_{\P(\Si^{\mathrm{op}})} (R_{row}(t))^\ast x_1^{\ast}\ldots x_k^{\ast}= \big( x_k\ldots x_1R_{row}(t)\big)^\ast\\
&\sim_{\P(\Si^{\mathrm{op}})} \big(R_{row}( x_k \insl{} ( \ldots\insl{}  (x_1\insl{} t)\ldots))\big)^{\ast}.
\end{array}
\]
Hence, we deduce by Lemma~\ref{L:DualTableauxGreenePlactic} and Theorem~\ref{T:DualityTableauTheorem}  the following equivalence
\[
R_{row}( x_k \insl{} ( \ldots\insl{}  (x_1\insl{} t)\ldots))
\sim_{\P(\Si)} 
R_{row}( \big(((\ldots(t^{\mathrm{op}} \insr{} x_1^{\ast})  \insr{} \ldots )\insr{} x_k^{\ast})\big)^{\mathrm{op}}),
\]
 showing Equality~\eqref{E:DualityRelations} by the cross-section property~\eqref{Eq:CrossSectionProperty}.
\end{proof}

\section{The super Littlewood--Richardson rule}
\label{S:SuperLittlewoodRichardsonRule}

We give in this section an interpretation of the super jeu de taquin in terms of Fomin's  growth diagrams and we deduce remarkable properties of this procedure. Finally, we follow the approach given  in~\cite{Leeuwen01} for the non-signed case in order to  give a combinatorial version of the super Littlewood--Richardson rule.

In the sequel, we will assume that~$\Si$ is finite.

\subsection{Super Schur functions}
\label{SS:SuperSchurFunctions}
Starting from the super plactic monoid~$\P(\Si)$, we can construct a $\mathbb{Z}$-algebra, denoted by~$R_{\Si}$, whose linear generators are the monomials in the monoid~$\P(\Si)$. This is an associative and unitary ring that is not commutative. A generic element in~$R_{\Si}$ can be realized by a formal sum of super  plactic classes with coefficients from~$\mathbb{Z}$. 
Since every super plactic class can be represented by a super tableau, a typical element in~$R_{\Si}$ is a formal sum of super tableaux.
There is a canonical homomorphism from $R_{\Si}$ onto the ring of polynomials~$\mathbb{Z}[X]$ that takes a super tableau $t$ to its monomial~$x^t$, where~$x^t$ is the product of the variables~$x_i$, each occurring as many times in $x^t$ as $i$ occurs in $t$.
Define~$S_\lambda$ (resp.~$S_{\lambda/\mu}$) in~$R_{\Si}$ to be the sum of all super tableaux (resp. super skew tableaux) of shape~$\lambda$ (resp.~$\lambda/\mu$) and entries in~$\Si$, with~$\lambda\in\Pr$ (resp.~$\lambda/\mu$ is a skew shape). 
The image of~$S_{\lambda}$ (resp.~$S_{\lambda}/\mu$) in~$\mathbb{Z}[X]$ gives rise to the so-called \emph{super Schur function} (resp. \emph{super skew Schur function})~$s_{\lambda}(X)$ (resp.~$s_{\lambda/\mu}(X)$).

Let~$\lambda$,~$\mu$ and~$\nu$ be in~$\Pr$ such that~$\lambda/\mu$ is a skew shape. For any~$t$ in~$\YoungT(\Si,\nu)$, we set 
\[
\SkewT(\Si,\lambda/\mu, \dashv t):=\big\{S\in \SkewT(\Si,\lambda/\mu)\;\big|\; \Rec(S)=t\big\},
\]
and we will call the integer
\[
c_{\lambda,\mu}^{\nu} \; := \; \# \SkewT(\Si,\lambda/\mu, \dashv t)
\]
the \emph{super Littlewood--Richardson coefficient}.

\begin{theorem}[The super Littlewood--Richardson rule]
\label{T:SuperLittlewoodRichardsonRule}
Let~$\lambda$,~$\mu$ and~$\nu$ be partitions in~$\Pr$ such that~$\lambda/\mu$ is a skew shape. Fix a super  tableau $t$ of shape~$\nu$. Then the super Littlewood--Richardson coefficient~$c_{\lambda,\mu}^{\nu}$  does not depend on~$t$, and depends only on~$\lambda$,~$\mu$  and~$\nu$. Moreover, the following identity
\begin{equation}
\label{Eq:SuperLittlewoodRichardsonRule}
S_{\lambda/\mu}\;=\; \underset{\nu}{\sum} c_{\lambda,\mu}^{\nu} S_{\nu}
\end{equation}
holds in~$R_{\Si}$.
\end{theorem}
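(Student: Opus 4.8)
The plan is to establish the theorem in two stages: first the ``coefficient independence'' statement, then the algebraic identity~\eqref{Eq:SuperLittlewoodRichardsonRule}, which will follow almost formally once the first part is in hand.

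\textbf{Step 1: independence of~$c_{\lambda,\mu}^\nu$ from the choice of~$t$.} The key tool is the symmetry of the super jeu de taquin via super growth diagrams, as announced in the introduction. For a fixed skew shape~$\lambda/\mu$, I would set up Fomin's growth-diagram formalism adapted to the super Young lattice~$(\Pr,\subseteq)$: a super growth diagram records, along saturated chains in~$(\Pr,\subseteq)$ labelling rows and columns, the evolution of shapes under successive sliding operations, and Fomin's local rule on each~$2\times2$ cell is reversible. The point is that applying the super jeu de taquin to a super skew tableau~$S$ of shape~$\lambda/\mu$ and recording the rectification data amounts to filling a growth diagram whose left/bottom boundary encodes~$\Rec(S)=t$ (via its standardization, i.e. the chain of shapes~$\emptyset=\nu^{(0)}\subseteq\nu^{(1)}\subseteq\cdots\subseteq\nu^{(|\nu|)}=\nu$ obtained by reading~$t$ box by box) and whose top/right boundary encodes the ``recording'' chain from~$\mu$ to~$\lambda$ together with the straightening data of~$S$. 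Because Fomin's local rule is a bijection that can be iterated from either corner of the diagram, the number of super skew tableaux~$S$ of shape~$\lambda/\mu$ rectifying to a tableau with a \emph{given} standardization chain depends only on that chain and on~$\lambda/\mu$; then a separate argument shows it does not even depend on the chain but only on~$\nu=\operatorname{sh}(t)$. For this last reduction I would invoke the compatibility of the super jeu de taquin with the super plactic congruence (Lemmata~\ref{L:KnuthequivalenceJeudeTaquin} and~\ref{L:RectificationUnicity}): two tableaux of the same shape~$\nu$ are related by a sequence of moves that lift, through the growth-diagram correspondence, to a shape-preserving bijection on the fibers~$\SkewT(\Si,\lambda/\mu,\dashv t)$. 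Concretely, given~$t,t'\in\YoungT(\Si,\nu)$, one produces an explicit bijection~$\SkewT(\Si,\lambda/\mu,\dashv t)\to\SkewT(\Si,\lambda/\mu,\dashv t')$ by composing: rectify~$S$, then ``un-rectify'' using the reverse sliding of Subsection~\ref{SS:ReverseSlidings} against the chain of~$t'$ instead of~$t$; that this lands in~$\SkewT(\Si,\lambda/\mu)$ uses Lemma~\ref{L:RectificationTableau} and the reversibility noted at the end of Subsection~\ref{SS:ReverseSlidings}, and that it rectifies to~$t'$ is Lemma~\ref{L:RectificationUnicity}.

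\textbf{Step 2: the identity in~$R_\Si$.} By definition~$S_{\lambda/\mu}$ is the formal sum over all~$S\in\SkewT(\Si,\lambda/\mu)$ of the super plactic class of~$R_{row}(S)$, and by Lemma~\ref{L:RectificationUnicity} this class equals the class of~$R_{row}(\Rec(S))$, i.e. the super tableau~$\Rec(S)$ viewed as an element of~$R_\Si$. Partitioning~$\SkewT(\Si,\lambda/\mu)$ according to the value~$t=\Rec(S)$ and then according to~$\nu=\operatorname{sh}(t)$ gives
\[
S_{\lambda/\mu}\;=\;\sum_{S\in\SkewT(\Si,\lambda/\mu)}\Rec(S)\;=\;\sum_{\nu}\;\sum_{t\in\YoungT(\Si,\nu)}\#\SkewT(\Si,\lambda/\mu,\dashv t)\cdot t\;=\;\sum_{\nu}c_{\lambda,\mu}^{\nu}\sum_{t\in\YoungT(\Si,\nu)}t\;=\;\sum_{\nu}c_{\lambda,\mu}^{\nu}S_{\nu},
\]
where the third equality is exactly Step~1 (the coefficient~$\#\SkewT(\Si,\lambda/\mu,\dashv t)$ is constant on~$\YoungT(\Si,\nu)$, equal to~$c_{\lambda,\mu}^\nu$) and the last is the definition of~$S_\nu$. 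One should note the sum over~$\nu$ is finite: since~$\Si$ is finite, only partitions~$\nu$ with~$|\nu|=|\lambda/\mu|$ and at most~$\#\Si$ parts (and bounded part-lengths) contribute a nonempty~$\YoungT(\Si,\nu)$.

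\textbf{Main obstacle.} The crux is Step~1, and within it the passage from ``depends only on the standardization chain'' to ``depends only on the shape~$\nu$'': it requires knowing that the super growth-diagram correspondence intertwines the super jeu de taquin with the super plactic congruence \emph{on both boundaries simultaneously}, so that relabelling the boundary chain of~$t$ by that of a plactically equivalent~$t'$ is a legitimate, invertible operation on the diagram. Setting up the super local rule carefully — verifying it is a well-defined bijection in the presence of the two-color constraints of~\eqref{Eq:SuperKnuthRelations} (repetition allowed for~$\Si_0$ along rows, for~$\Si_1$ along columns) and that it is symmetric under transposition of the diagram — is where all the genuine work lies; everything afterward is bookkeeping. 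I would isolate the symmetry of the super growth diagram as a lemma proved before this theorem and cite it here, reducing the present proof to the two-step argument above.
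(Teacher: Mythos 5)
Your proposal is correct in substance and rests on the same machinery as the paper: the super growth-diagram encoding of the super jeu de taquin and the symmetry of Fomin's local rule. The organization differs in two ways. For the independence of~$c_{\lambda,\mu}^{\nu}$ from~$t$, the paper (Proposition~\ref{P:LittlewoodRichardsonRUleCoefficients}) counts completions of a super growth diagram with fixed corners, whereas you produce an explicit fiber bijection~$\SkewT(\Si,\lambda/\mu,\dashv t)\to\SkewT(\Si,\lambda/\mu,\dashv t')$ by rectifying and then un-rectifying against~$t'$; this is exactly the map~$S\mapsto\Psi(S,t')$ of the paper's Proposition~\ref{P:SuperLittlewoodRichardsonBijection}, whose proof runs~$\Grw(t'',S)=(t,S')$ then~$\Grw(t',S')=(t'',S'')$ and uses Lemma~\ref{L:GrowthDiagramSymmetry}, Theorem~\ref{T:ConfluenceJeuDeTaquin} and Lemma~\ref{L:Dualequivalence}. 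For the identity~\eqref{Eq:SuperLittlewoodRichardsonRule}, the paper assembles the inverses of the bijections~$\Psi$ over all~$\nu$, while you simply partition~$\SkewT(\Si,\lambda/\mu)$ by rectification and invoke constancy of the fiber size together with Lemma~\ref{L:RectificationUnicity}; given Step~1 this counting argument is pure bookkeeping and gives a slightly leaner deduction, whereas the paper's route buys an explicit bijection (hence dual-equivalence information) rather than a mere equality of cardinalities. One point to repair in your Step~1: the claim that un-rectifying~$t'$ along the recording data of~$S$ lands back in inner shape~$\mu$ does not follow from Lemma~\ref{L:RectificationTableau} and the reversibility remark of Subsection~\ref{SS:ReverseSlidings} alone, since those only let you undo slides on the \emph{same} tableau; it is precisely the symmetry of the super growth diagram (Lemma~\ref{L:GrowthDiagramSymmetry}) combined with confluence that makes it work. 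You do flag this as the main obstacle and propose to isolate it as a lemma, which is exactly what the paper does, so this is a matter of justification placement rather than a missing idea; your preliminary reduction via ``standardization chains'' is redundant once that lemma is available.
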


The rest of this section is devoted to prove this result. First, using an interpretation of the super jeu de taquin in terms of Fomin's growth diagrams, Proposition~\ref{P:LittlewoodRichardsonRUleCoefficients} shows that the  coefficient~$c_{\lambda,\mu}^{\nu}$  does not depend on~$t$, and depends only on~$\lambda$,~$\mu$  and~$\nu$. Secondly, if we combine the inverses of bijections~\eqref{Eq:SuperLittlewoodRichardsonBijection} obtained in Proposition~\ref{P:SuperLittlewoodRichardsonBijection} for all partition~$\nu$ in~$\Pr$, we deduce the following bijection
\[
  \SkewT(\Si,\lambda/\mu)\to\underset{\nu\in\Pr}{\bigcup }\SkewT(\Si,\lambda/\mu, \dashv t)\times \YoungT(\Si,\nu) 
\]
for some~$t$ in~$\YoungT(\Si,\nu)$ chosen separately for every~$\nu$, showing that Equality~\eqref{Eq:SuperLittlewoodRichardsonRule} holds in~$R_{\Si}$.

\subsection{Super growth diagrams}
\label{SS:SuperJeuDeTaquinGrowthDiagrams}
The super jeu de taquin procedure can be described using the notion of \emph{growth diagrams} introduced by Fomin,~\cite{Fomin99}. Let $S$  be in~$\SkewT(\Si,\lambda/\mu)$. We represent~$S$ by an \emph{increasing chain of partitions}  in the  super Young lattice~$(\Pr,\subseteq)$ from~$\mu$  to~$\lambda$, that is, a sequence of partitions starting with~$\mu$ and ending with~$\lambda$ such that the Young diagram of each partition is obtained from that of its predecessor by adding exactly one box, so that the boxes are ordered by increasing value of their entries, or in case these entries are equal and belong to distinct columns (resp. the same column), by increasing column (resp. row) number.
A \emph{super growth diagram} is a rectangular array of partitions where every row (resp. column) from left to right (resp. from bottom to top) encodes a super tableau such that all super
tableaux formed by the rows have the same content, as do all super tableaux formed by the
columns. 
After each forward sliding of the super jeu de taquin procedure, we record the corresponding  chain of partitions in a  super growth diagram so that each chain from the topmost row to the bottom-most row represents one of the intermediate super tableaux of this procedure. In particular, the topmost  row represents the initial super skew  tableau~$S$ and the bottom-most one encodes its rectified super  tableau~$\Rec(S)$.
For instance, the super skew tableau~$S$ in Example~\ref{SS:ExampleJeuDeTaquin} is represented by the following chain of partitions:
\[
\scalebox{0.6}{
\xymatrix  @C=1em @R=0.1em {
{\ytableausetup{smalltableaux}
\begin{ytableau}
\empty&\empty\\
\empty&\empty\\
\empty&\empty
\end{ytableau}}
	\ar@{.}[rr] ^{}
&&
{\ytableausetup{smalltableaux}
\begin{ytableau}
\empty&\empty\\
\empty&\empty\\
\empty&\empty\\
\empty&\none
\end{ytableau}}
	\ar@{.}[rr] ^{}
&&
{\ytableausetup{smalltableaux}
\begin{ytableau}
\empty&\empty\\
\empty&\empty\\
\empty&\empty\\
\empty&\empty
\end{ytableau}}
	\ar@{.}[rr] ^{}
&&
{\ytableausetup{smalltableaux}
\begin{ytableau}
\empty&\empty&\empty\\
\empty&\empty\\
\empty&\empty\\
\empty&\empty
\end{ytableau}}
	\ar@{.}[rr] ^{}
&&
{\ytableausetup{smalltableaux}
\begin{ytableau}
\empty&\empty&\empty&\empty\\
\empty&\empty\\
\empty&\empty\\
\empty&\empty
\end{ytableau}}
	\ar@{.}[rr] ^{}
&&
{\ytableausetup{smalltableaux}
\begin{ytableau}
\empty&\empty&\empty&\empty\\
\empty&\empty&\empty\\
\empty&\empty\\
\empty&\empty
\end{ytableau}}
\ar@{.}[rr] ^{}
&&
{\ytableausetup{smalltableaux}
\begin{ytableau}
\empty&\empty&\empty&\empty\\
\empty&\empty&\empty\\
\empty&\empty&\empty\\
\empty&\empty
\end{ytableau}}
\ar@{.}[rr] ^{}
&&
{\ytableausetup{smalltableaux}
\begin{ytableau}
\empty&\empty&\empty&\empty\\
\empty&\empty&\empty\\
\empty&\empty&\empty\\
\empty&\empty\\
\empty&\none
\end{ytableau}}
}
}
\]
Its rectification tableau~$\Rec(S)$  is represented by the following chain of partitions:
\[
\scalebox{0.6}{
\xymatrix  @C=1em @R=0.1em {
\scalebox{1.5}{{$\emptyset$}}
	\ar@{.}[rr] ^{}
&&
{\ytableausetup{smalltableaux}
\begin{ytableau}
\empty&\none
\end{ytableau}}
	\ar@{.}[rr] ^{}
&&
{\ytableausetup{smalltableaux}
\begin{ytableau}
\empty&\empty
\end{ytableau}}
	\ar@{.}[rr] ^{}
&&
{\ytableausetup{smalltableaux}
\begin{ytableau}
\empty&\empty&\empty
\end{ytableau}}
	\ar@{.}[rr] ^{}
&&
{\ytableausetup{smalltableaux}
\begin{ytableau}
\empty&\empty&\empty&\empty
\end{ytableau}}
	\ar@{.}[rr] ^{}
&&
{\ytableausetup{smalltableaux}
\begin{ytableau}
\empty&\empty&\empty&\empty\\
\empty&\none
\end{ytableau}}
\ar@{.}[rr] ^{}
&&
{\ytableausetup{smalltableaux}
\begin{ytableau}
\empty&\empty&\empty&\empty\\
\empty&\none&\none\\
\empty&\none&\none
\end{ytableau}}
\ar@{.}[rr] ^{}
&&
{\ytableausetup{smalltableaux}
\begin{ytableau}
\empty&\empty&\empty&\empty\\
\empty&\none&\none\\
\empty&\none\\
\empty&\none
\end{ytableau}}
}
}
\]
The super jeu de taquin on~$S$ is then represented by the following super growth diagram:
\[
\scalebox{0.6}{
\xymatrix  @C=1em @R=0.1em {
{\ytableausetup{smalltableaux}
\begin{ytableau}
\empty&\empty\\
\empty&\empty\\
\empty&\empty
\end{ytableau}}
	\ar@{.}[rr] ^{}
\ar@{.}[dd] ^{}
&&
{\ytableausetup{smalltableaux}
\begin{ytableau}
\empty&\empty\\
\empty&\empty\\
\empty&\empty\\
\empty&\none
\end{ytableau}}
	\ar@{.}[rr] ^{}
\ar@{.}[dd] ^{}
&&
{\ytableausetup{smalltableaux}
\begin{ytableau}
\empty&\empty\\
\empty&\empty\\
\empty&\empty\\
\empty&\empty
\end{ytableau}}
	\ar@{.}[rr] ^{}
\ar@{.}[dd] ^{}
&&
{\ytableausetup{smalltableaux}
\begin{ytableau}
\empty&\empty&\empty\\
\empty&\empty\\
\empty&\empty\\
\empty&\empty
\end{ytableau}}
	\ar@{.}[rr] ^{}
\ar@{.}[dd] ^{}
&&
{\ytableausetup{smalltableaux}
\begin{ytableau}
\empty&\empty&\empty&\empty\\
\empty&\empty\\
\empty&\empty\\
\empty&\empty
\end{ytableau}}
	\ar@{.}[rr] ^{}
\ar@{.}[dd] ^{}
&&
{\ytableausetup{smalltableaux}
\begin{ytableau}
\empty&\empty&\empty&\empty\\
\empty&\empty&\empty\\
\empty&\empty\\
\empty&\empty
\end{ytableau}}
\ar@{.}[rr] ^{}
\ar@{.}[dd] ^{}
&&
{\ytableausetup{smalltableaux}
\begin{ytableau}
\empty&\empty&\empty&\empty\\
\empty&\empty&\empty\\
\empty&\empty&\empty\\
\empty&\empty
\end{ytableau}}
\ar@{.}[rr] ^{}
\ar@{.}[dd] ^{}
&&
{\ytableausetup{smalltableaux}
\begin{ytableau}
\empty&\empty&\empty&\empty\\
\empty&\empty&\empty\\
\empty&\empty&\empty\\
\empty&\empty\\
\empty&\none
\end{ytableau}}
\ar@{.}[dd] ^{}
\\
\\
{\ytableausetup{smalltableaux}
\begin{ytableau}
\empty&\empty\\
\empty&\empty\\
\empty&\none
\end{ytableau}}
	\ar@{.}[rr] ^{}
\ar@{.}[dd] ^{}
&&
{\ytableausetup{smalltableaux}
\begin{ytableau}
\empty&\empty\\
\empty&\empty\\
\empty&\none\\
\empty&\none
\end{ytableau}}
	\ar@{.}[rr] ^{}
\ar@{.}[dd] ^{}
&&
{\ytableausetup{smalltableaux}
\begin{ytableau}
\empty&\empty\\
\empty&\empty\\
\empty&\empty\\
\empty&\none
\end{ytableau}}
	\ar@{.}[rr] ^{}
\ar@{.}[dd] ^{}
&&
{\ytableausetup{smalltableaux}
\begin{ytableau}
\empty&\empty&\empty\\
\empty&\empty\\
\empty&\empty\\
\empty&\none
\end{ytableau}}
	\ar@{.}[rr] ^{}
\ar@{.}[dd] ^{}
&&
{\ytableausetup{smalltableaux}
\begin{ytableau}
\empty&\empty&\empty&\empty\\
\empty&\empty\\
\empty&\empty\\
\empty&\none
\end{ytableau}}
	\ar@{.}[rr] ^{}
\ar@{.}[dd] ^{}
&&
{\ytableausetup{smalltableaux}
\begin{ytableau}
\empty&\empty&\empty&\empty\\
\empty&\empty&\empty\\
\empty&\empty\\
\empty&\none
\end{ytableau}}
\ar@{.}[rr] ^{}
\ar@{.}[dd] ^{}
&&
{\ytableausetup{smalltableaux}
\begin{ytableau}
\empty&\empty&\empty&\empty\\
\empty&\empty&\empty\\
\empty&\empty&\empty\\
\empty&\none
\end{ytableau}}
\ar@{.}[rr] ^{}
\ar@{.}[dd] ^{}
&&
{\ytableausetup{smalltableaux}
\begin{ytableau}
\empty&\empty&\empty&\empty\\
\empty&\empty&\empty\\
\empty&\empty&\empty\\
\empty&\none\\
\empty&\none
\end{ytableau}}
\ar@{.}[dd] ^{}
\\
\\
{\ytableausetup{smalltableaux}
\begin{ytableau}
\empty&\empty\\
\empty&\empty
\end{ytableau}}
	\ar@{.}[rr] ^{}
\ar@{.}[dd] ^{}
&&
{\ytableausetup{smalltableaux}
\begin{ytableau}
\empty&\empty\\
\empty&\empty\\
\empty&\none
\end{ytableau}}
	\ar@{.}[rr] ^{}
\ar@{.}[dd] ^{}
&&
{\ytableausetup{smalltableaux}
\begin{ytableau}
\empty&\empty\\
\empty&\empty\\
\empty&\empty
\end{ytableau}}
	\ar@{.}[rr] ^{}
\ar@{.}[dd] ^{}
&&
{\ytableausetup{smalltableaux}
\begin{ytableau}
\empty&\empty&\empty\\
\empty&\empty\\
\empty&\empty
\end{ytableau}}
	\ar@{.}[rr] ^{}
\ar@{.}[dd] ^{}
&&
{\ytableausetup{smalltableaux}
\begin{ytableau}
\empty&\empty&\empty&\empty\\
\empty&\empty\\
\empty&\empty
\end{ytableau}}
	\ar@{.}[rr] ^{}
\ar@{.}[dd] ^{}
&&
{\ytableausetup{smalltableaux}
\begin{ytableau}
\empty&\empty&\empty&\empty\\
\empty&\empty&\empty\\
\empty&\empty
\end{ytableau}}
\ar@{.}[rr] ^{}
\ar@{.}[dd] ^{}
&&
{\ytableausetup{smalltableaux}
\begin{ytableau}
\empty&\empty&\empty&\empty\\
\empty&\empty&\empty\\
\empty&\empty&\empty
\end{ytableau}}
\ar@{.}[rr] ^{}
\ar@{.}[dd] ^{}
&&
{\ytableausetup{smalltableaux}
\begin{ytableau}
\empty&\empty&\empty&\empty\\
\empty&\empty&\empty\\
\empty&\empty&\empty\\
\empty&\none
\end{ytableau}}
\ar@{.}[dd] ^{}
\\
\\
{\ytableausetup{smalltableaux}
\begin{ytableau}
\empty&\empty\\
\empty&\none
\end{ytableau}}
	\ar@{.}[rr] ^{}
\ar@{.}[dd] ^{}
&&
{\ytableausetup{smalltableaux}
\begin{ytableau}
\empty&\empty\\
\empty&\none\\
\empty&\none
\end{ytableau}}
	\ar@{.}[rr] ^{}
\ar@{.}[dd] ^{}
&&
{\ytableausetup{smalltableaux}
\begin{ytableau}
\empty&\empty\\
\empty&\empty\\
\empty&\none
\end{ytableau}}
	\ar@{.}[rr] ^{}
\ar@{.}[dd] ^{}
&&
{\ytableausetup{smalltableaux}
\begin{ytableau}
\empty&\empty&\empty\\
\empty&\empty\\
\empty&\none
\end{ytableau}}
	\ar@{.}[rr] ^{}
\ar@{.}[dd] ^{}
&&
{\ytableausetup{smalltableaux}
\begin{ytableau}
\empty&\empty&\empty&\empty\\
\empty&\empty\\
\empty&\none
\end{ytableau}}
	\ar@{.}[rr] ^{}
\ar@{.}[dd] ^{}
&&
{\ytableausetup{smalltableaux}
\begin{ytableau}
\empty&\empty&\empty&\empty\\
\empty&\empty&\empty\\
\empty&\none
\end{ytableau}}
\ar@{.}[rr] ^{}
\ar@{.}[dd] ^{}
&&
{\ytableausetup{smalltableaux}
\begin{ytableau}
\empty&\empty&\empty&\empty\\
\empty&\empty&\empty\\
\empty&\empty&\none
\end{ytableau}}
\ar@{.}[rr] ^{}
\ar@{.}[dd] ^{}
&&
{\ytableausetup{smalltableaux}
\begin{ytableau}
\empty&\empty&\empty&\empty\\
\empty&\empty&\empty\\
\empty&\empty&\none\\
\empty&\none
\end{ytableau}}
\ar@{.}[dd] ^{}
\\
\\
{\ytableausetup{smalltableaux}
\begin{ytableau}
\empty&\none\\
\empty&\none
\end{ytableau}}
	\ar@{.}[rr] ^{}
\ar@{.}[dd] ^{}
&&
{\ytableausetup{smalltableaux}
\begin{ytableau}
\empty&\none\\
\empty&\none\\
\empty&\none
\end{ytableau}}
	\ar@{.}[rr] ^{}
\ar@{.}[dd] ^{}
&&
{\ytableausetup{smalltableaux}
\begin{ytableau}
\empty&\empty\\
\empty&\none\\
\empty&\none
\end{ytableau}}
	\ar@{.}[rr] ^{}
\ar@{.}[dd] ^{}
&&
{\ytableausetup{smalltableaux}
\begin{ytableau}
\empty&\empty&\empty\\
\empty&\none\\
\empty&\none
\end{ytableau}}
	\ar@{.}[rr] ^{}
\ar@{.}[dd] ^{}
&&
{\ytableausetup{smalltableaux}
\begin{ytableau}
\empty&\empty&\empty&\empty\\
\empty&\none\\
\empty&\none
\end{ytableau}}
	\ar@{.}[rr] ^{}
	\ar@{.}[dd] ^{}
&&
{\ytableausetup{smalltableaux}
\begin{ytableau}
\empty&\empty&\empty&\empty\\
\empty&\empty&\none\\
\empty&\none
\end{ytableau}}
\ar@{.}[rr] ^{}
\ar@{.}[dd] ^{}
&&
{\ytableausetup{smalltableaux}
\begin{ytableau}
\empty&\empty&\empty&\empty\\
\empty&\empty&\none\\
\empty&\empty&\none
\end{ytableau}}
\ar@{.}[rr] ^{}
\ar@{.}[dd] ^{}
&&
{\ytableausetup{smalltableaux}
\begin{ytableau}
\empty&\empty&\empty&\empty\\
\empty&\empty&\none\\
\empty&\empty&\none\\
\empty&\none
\end{ytableau}}
\ar@{.}[dd] ^{}
\\
\\
{\ytableausetup{smalltableaux}
\begin{ytableau}
\empty&\none
\end{ytableau}}
	\ar@{.}[rr] ^{}
\ar@{.}[dd] ^{}
&&
{\ytableausetup{smalltableaux}
\begin{ytableau}
\empty&\none\\
\empty&\none
\end{ytableau}}
	\ar@{.}[rr] ^{}
\ar@{.}[dd] ^{}
&&
{\ytableausetup{smalltableaux}
\begin{ytableau}
\empty&\empty\\
\empty&\none
\end{ytableau}}
	\ar@{.}[rr] ^{}
\ar@{.}[dd] ^{}
&&
{\ytableausetup{smalltableaux}
\begin{ytableau}
\empty&\empty&\empty\\
\empty&\none
\end{ytableau}}
	\ar@{.}[rr] ^{}
\ar@{.}[dd] ^{}
&&
{\ytableausetup{smalltableaux}
\begin{ytableau}
\empty&\empty&\empty&\empty\\
\empty&\none
\end{ytableau}}
	\ar@{.}[rr] ^{}
\ar@{.}[dd] ^{}
&&
{\ytableausetup{smalltableaux}
\begin{ytableau}
\empty&\empty&\empty&\empty\\
\empty&\empty&\none
\end{ytableau}}
\ar@{.}[rr] ^{}
\ar@{.}[dd] ^{}
&&
{\ytableausetup{smalltableaux}
\begin{ytableau}
\empty&\empty&\empty&\empty\\
\empty&\empty&\none\\
\empty&\none&\none
\end{ytableau}}
\ar@{.}[rr] ^{}
\ar@{.}[dd] ^{}
&&
{\ytableausetup{smalltableaux}
\begin{ytableau}
\empty&\empty&\empty&\empty\\
\empty&\empty&\none\\
\empty&\none\\
\empty&\none
\end{ytableau}}
\ar@{.}[dd] ^{}
\\
\\
\scalebox{1.5}{{$\emptyset$}}
	\ar@{.}[rr] ^{}
&&
{\ytableausetup{smalltableaux}
\begin{ytableau}
\empty&\none
\end{ytableau}}
	\ar@{.}[rr] ^{}
&&
{\ytableausetup{smalltableaux}
\begin{ytableau}
\empty&\empty
\end{ytableau}}
	\ar@{.}[rr] ^{}
&&
{\ytableausetup{smalltableaux}
\begin{ytableau}
\empty&\empty&\empty
\end{ytableau}}
	\ar@{.}[rr] ^{}
&&
{\ytableausetup{smalltableaux}
\begin{ytableau}
\empty&\empty&\empty&\empty
\end{ytableau}}
	\ar@{.}[rr] ^{}
&&
{\ytableausetup{smalltableaux}
\begin{ytableau}
\empty&\empty&\empty&\empty\\
\empty&\none
\end{ytableau}}
\ar@{.}[rr] ^{}
&&
{\ytableausetup{smalltableaux}
\begin{ytableau}
\empty&\empty&\empty&\empty\\
\empty&\none&\none\\
\empty&\none&\none
\end{ytableau}}
\ar@{.}[rr] ^{}
&&
{\ytableausetup{smalltableaux}
\begin{ytableau}
\empty&\empty&\empty&\empty\\
\empty&\none&\none\\
\empty&\none\\
\empty&\none
\end{ytableau}}
}
}
\]
Note that the leftmost (resp. rightmost) column of a super growth diagram encodes the \emph{recording tableau},  denoted  by~$R$ (resp.~$R'$), filled by elements of~$\Si$, that represents the change in the inner (resp. outer) shape.
Following Theorem~\ref{T:ConfluenceJeuDeTaquin},  the super tableau~$\Rec(S)$ does not depend on the recording tableau~$R$.
For instance, the leftmost column of the above super growth diagram can encode the following tableau:
\[
\raisebox{-0.25cm}{$R\;=\;$}
\scalebox{0.9}{
\ytableausetup{smalltableaux}
\begin{ytableau}
1&3\\
2&4\\
5&6
\end{ytableau}
}
\]
that records the order in which the forward sliding are performed on~$S$. We first choose the inner corner corresponding to the box filled by~$6$, then the inner corner corresponding to the one filled by~$5$, and so on. In this case, the rightmost column encodes the following super skew tableau:
\[
\raisebox{-0.5cm}{$R'\;=\;$}
\scalebox{0.9}{
\ytableausetup{smalltableaux}
\begin{ytableau}
\empty&\empty&\empty&\empty\\
\empty&1&3\\
\empty&2&4\\
\empty&6\\
5&\none
\end{ytableau}}
\raisebox{-0.5cm}{$.$}
\]

Consider a super skew tableau~$S$ in~$\SkewT(\Si,\lambda/\mu)$. We define the following partial bijection
\[
\begin{array}{rl}
\displaystyle\Grw: \underset{\mu\in\Pr}{\bigcup } \YoungT(\Si,\mu)\times\SkewT(\Si,\lambda/\mu)&\displaystyle  \rightharpoonup \underset{\rho\in\Pr}{\bigcup }\YoungT(\Si,\rho)\times\SkewT(\Si,\lambda/\rho)\\
(R,S)&\mapsto (\Rec(S), R')
\end{array}
\]
where~$R$ (resp.~$R'$) denotes the super (resp. skew) tableau corresponding to the leftmost (resp. rightmost) column of the super growth diagram of~$S$.

\subsubsection{Super Fomin's local rule}
\label{SSS:GrowthDiagramLocalRule}
Starting with the topmost row  and the leftmost column,  the entire super growth diagram is completed using the following Fomin's local rule.
Let~$\mu$,~$\rho$,~$\nu$ and~$\lambda$ be partitions in~$\Pr$ that form the following \emph{square of a super growth diagram}:
\[
\scalebox{1}{
\xymatrix  @C=1em @R=0.1em{
{\mu}
\ar@{.}[r] ^{}
\ar@{.}[dd] ^{}
&{\rho}
\ar@{.}[dd] ^{}
\\
\\
{\nu}
\ar@{.}[r] ^{}
&{\lambda}
}
}
\]
that is, both partitions~$\mu$ and~$\lambda$ contain~$\nu$ and~$\rho$ contains both~$\mu$ and~$\lambda$. Then by construction of the super growth diagram, the partition~$\lambda$ is uniquely determined from~$\rho$,~$\mu$ and~$\nu$ by the following \emph{Fomin's local rule}:
\begin{enumerate}[\bf i)]
\item if~$\mu$ is the only partition of its height that contains~$\nu$ and is contained in~$\rho$, then~$\lambda=\mu$,
\item otherwise, there is a unique such partition different from~$\mu$, that is equal to~$\lambda$.
\end{enumerate}

It is worth noting that Fomin's local rule is \emph{symmetric} in~$\lambda$ and~$\mu$, that is, the partition~$\lambda$ is computed from~$\nu$,~$\rho$ and~$\mu$ in the same way as~$\mu$ is computed from~$\nu$,~$\rho$ and~$\lambda$. As a consequence, we deduce the following  symmetry property of the super jeu de taquin:

\begin{lemma}
\label{L:GrowthDiagramSymmetry}
Let~$S$ be in~$\SkewT(\Si)$. If~$\Grw(R,S)=(\Rec(S), R')$ then~$\Grw(\Rec(S), R')=(R,S)$.
\end{lemma}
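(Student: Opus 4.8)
The plan is to deduce the identity from a single super growth diagram together with the symmetry of Fomin's local rule recorded just above. Let $S$ lie in $\SkewT(\Si,\lambda/\mu)$, let $R\in\YoungT(\Si,\mu)$ be the chosen recording tableau, and let $D$ be the resulting super growth diagram, so that $\Grw(R,S)=(\Rec(S),R')$. By construction $D$ is a rectangular array of partitions whose four boundary chains are: the topmost row, encoding $S$ (the chain from $\mu$ to $\lambda$); the leftmost column, encoding $R$ (from $\emptyset$ to $\mu$); the bottom-most row, encoding $\Rec(S)$ (from $\emptyset$ to $\rho$, with $\rho$ the shape of $\Rec(S)$); and the rightmost column, encoding $R'$ (from $\rho$ to $\lambda$). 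Thus, reading clockwise from the top-left, the corners of $D$ are $\mu,\lambda,\rho,\emptyset$, and every unit square of $D$ satisfies Fomin's local rule.

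First I would reflect $D$ across the diagonal joining its top-right corner $\lambda$ and its bottom-left corner $\emptyset$, obtaining a rectangular array $\widetilde D$ (with the dimensions of $D$ transposed). This reflection fixes the top-right and bottom-left corners of the array and interchanges the top-left and bottom-right ones, so the corners of $\widetilde D$, read clockwise from the top-left, are $\rho,\lambda,\mu,\emptyset$; moreover it carries a left-to-right row to a bottom-to-top column and conversely, and these are exactly the conventional reading directions for rows and columns of a growth diagram, so that the topmost row of $\widetilde D$ encodes $R'$, its leftmost column encodes $\Rec(S)$, its bottom-most row encodes $R$, and its rightmost column encodes $S$.

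Next I would verify that $\widetilde D$ is itself a valid super growth diagram, i.e.\ that each of its unit squares satisfies Fomin's local rule. Every unit square of $\widetilde D$ is the reflection of a unit square of $D$; if the corners of the latter are $a,b,c,d$ at the top-left, top-right, bottom-left, bottom-right respectively, then the reflected square has $d,b,c,a$ in those same four positions, because the reflection fixes the top-right and bottom-left corners of each small square and swaps the other two. Validity of the original square says that $d$ is obtained from $a,b,c$ by Fomin's local rule; by the symmetry of that rule in its top-left and bottom-right entries (with the top-right and bottom-left entries held fixed), this is equivalent to $a$ being obtained from $d,b,c$ by the same rule, which is precisely validity of the reflected square. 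The nesting conditions among the four partitions are manifestly symmetric under this swap and hence preserved.

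Finally, $\Grw(\Rec(S),R')$ is by definition obtained by taking $\Rec(S)$ as leftmost column and $R'$ as topmost row of a super growth diagram and completing it by Fomin's local rule; since the local rule determines each square's fourth corner uniquely, this completion is unique, and as $\widetilde D$ is a valid super growth diagram with precisely this topmost row and leftmost column, the completion is $\widetilde D$, whose bottom-most row and rightmost column are $R$ and $S$. Hence $\Grw(\Rec(S),R')=(R,S)$. I expect the only delicate point to be the bookkeeping of the second paragraph — matching each boundary chain and each unit square of $\widetilde D$ to the corresponding feature of $D$ and checking the reading conventions — after which the symmetry of Fomin's local rule under exchanging the top-left and bottom-right corners, together with uniqueness of the completion, does all the work.
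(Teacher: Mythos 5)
Your proposal is correct and is essentially the paper's own argument: the paper states this lemma as an immediate consequence of the symmetry of Fomin's local rule in the top-left and bottom-right corners of each unit square, and your reflection of the growth diagram across the diagonal joining $\emptyset$ and $\lambda$, combined with uniqueness of the local-rule completion, is exactly that remark written out in detail. Like the paper, you rely on the asserted identification of the jeu-de-taquin growth diagram with its completion by the local rule from the topmost row and leftmost column, so no new ingredient is introduced.
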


By this diagrammatic interpretation of the super jeu de taquin, we deduce the following result.
 
\begin{proposition}
 \label{P:LittlewoodRichardsonRUleCoefficients}
Fix partitions $\lambda$, $\mu$, and $\nu$ in~$\Pr$, and suppose that~$t$ is a super tableau of shape~$\nu$. Then the super Littlewood--Richardson coefficient~$c_{\lambda,\mu}^{\nu}$  does not depend on~$t$, and depends only on~$\lambda$,~$\mu$  and~$\nu$.
\end{proposition}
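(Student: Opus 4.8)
The plan is to read the independence of $c_{\lambda,\mu}^{\nu}$ on $t$ directly off the fact that the growth--diagram operation $\Grw$ is an involution, as recorded in Lemma~\ref{L:GrowthDiagramSymmetry}. Fix partitions $\lambda,\mu,\nu$ with $\lambda/\mu$ a skew shape, and fix once and for all a recording tableau $R_0$ of shape $\mu$; by Theorem~\ref{T:ConfluenceJeuDeTaquin} the rectification is insensitive to such a choice. I would show that, for every super tableau $t$ of shape $\nu$, the assignment $S\mapsto R'$ determined by $(t,R')=\Grw(R_0,S)$ is a bijection
\[
\Phi_t\colon\ \SkewT(\Si,\lambda/\mu,\dashv t)\ \longrightarrow\ \SkewT(\Si,\lambda/\nu,\dashv R_0).
\]
Since the target set involves neither $t$ nor any data beyond $\lambda$, $\mu$, $\nu$ (through $R_0$), this identifies $c_{\lambda,\mu}^{\nu}=\#\SkewT(\Si,\lambda/\nu,\dashv R_0)$ and in particular shows that it is independent of the chosen $t$.

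\emph{Well-definedness.} If $S\in\SkewT(\Si,\lambda/\mu)$ has $\Rec(S)=t$, then by the definition of $\Grw$ one has $\Grw(R_0,S)=(\Rec(S),R')=(t,R')$ for a uniquely determined super skew tableau $R'$ of shape $\lambda/\nu$, and by Lemma~\ref{L:GrowthDiagramSymmetry}, $\Grw(t,R')=(R_0,S)$. Reading the first coordinate of $\Grw(t,R')$ off the formula $(X,Y)\mapsto(\Rec(Y),X')$ gives $\Rec(R')=R_0$, that is $R'\in\SkewT(\Si,\lambda/\nu,\dashv R_0)$, so $\Phi_t$ does map into the asserted set.

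\emph{Bijectivity.} Injectivity is automatic: $\Grw$ is injective and $R_0$ is held fixed, so $S\mapsto(t,R')$, hence $S\mapsto R'$, is injective on $\SkewT(\Si,\lambda/\mu,\dashv t)$. For surjectivity, let $R'\in\SkewT(\Si,\lambda/\nu,\dashv R_0)$; then $\Grw(t,R')=(\Rec(R'),S)=(R_0,S)$ for some super skew tableau $S$ of shape $\lambda/\mu$ (using $\Rec(R')=R_0$, which has shape $\mu$), and Lemma~\ref{L:GrowthDiagramSymmetry} gives back $\Grw(R_0,S)=(t,R')$. Hence $\Rec(S)=t$, i.e.\ $S\in\SkewT(\Si,\lambda/\mu,\dashv t)$, and $\Phi_t(S)=R'$. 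This shows $\Phi_t$ is a bijection, and the Proposition follows.

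The step I expect to require the most care is keeping track of the domain of the \emph{partial} bijection $\Grw$: one must verify that $\Grw$ is genuinely defined on each pair $(R_0,S)$ and $(t,R')$ used above, which comes down to the fact that a super growth diagram can be completed starting from its topmost row together with its leftmost column and, symmetrically, from its bottom-most row together with its rightmost column --- precisely the symmetry of Fomin's local rule underlying Lemma~\ref{L:GrowthDiagramSymmetry}, together with the unique determination of the fourth partition in each square. A subsidiary point is to make a legitimate choice of recording tableau $R_0$ of shape $\mu$ so that $\Grw(R_0,\cdot)$ makes sense, confluence (Theorem~\ref{T:ConfluenceJeuDeTaquin}) guaranteeing that the outcome is independent of that choice.
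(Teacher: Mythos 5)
Your proposal is correct and follows essentially the same route as the paper: both arguments rest on the symmetry of Fomin's local rule (Lemma~\ref{L:GrowthDiagramSymmetry}) to trade the topmost rows of a super growth diagram with fixed bottom row~$t$ and fixed left column for its rightmost columns, whose count involves only~$\lambda$,~$\mu$ and~$\nu$. Your explicit bijection $\Phi_t\colon \SkewT(\Si,\lambda/\mu,\dashv t)\to\SkewT(\Si,\lambda/\nu,\dashv R_0)$ merely makes precise the paper's informal count of diagram completions, and your caveat about the domain of the partial map~$\Grw$ is the same issue the paper leaves implicit.
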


\begin{proof}
Let~$R$ be any super tableau of shape~$\mu$, and consider a super growth diagram with corners~$\emptyset$,~$\mu$,~$\lambda$ and~$\nu$  such that the bottom-most row correspond to~$t$ and the leftmost column encodes~$R$. 
Once we have the four corners, there are a certain number of ways to choose the rightmost column. Once we have that, any choice for the topmost row determines the whole super growth diagram by Fomin's local rule. So the number of ways to complete the super growth diagram, and so the number of topmost rows, is independent of the choice of the bottom-most row corresponding to~$t$,  and depends only on~$\lambda$,~$\mu$  and~$\nu$.
\end{proof}

\subsection{Dual equivalence}
Two super skew tableaux $S$ and $S'$ in~$\YoungT(\Si)$ of the same shape are \emph{dual equivalent} if for any super tableau~$R$ of appropriate shape  such that 
\begin{equation}
\label{Eq:Dualequivalence}
\Grw(R,S)=(\Rec(S), R')\quad \text{ and } \Grw(R,S')=(\Rec(S'), R'')
\end{equation}
we have~$R'=R''$. In other words, the super skew tableaux~$S$ and~$S'$ are dual equivalent if applying the same sequence of  the forward sliding to $S$ and to $S'$ always gives super tableaux of the same shape.

\begin{lemma}
\label{L:Dualequivalence}
Let~$S$ and~$S'$ be in~$\SkewT(\Si,\lambda/\mu)$. If $S$ and $S'$ are both  super jeu de taquin equivalent and dual equivalent, then $S = S'$.
\end{lemma}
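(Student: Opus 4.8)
The plan is to route both hypotheses through the growth-diagram map $\Grw$ and then cancel it using the symmetry of Fomin's local rule. First I would exploit that $S$ and $S'$ are super jeu de taquin equivalent: by Lemma~\ref{L:KnuthequivalenceJeudeTaquin} the readings satisfy $R_{row}(S)\sim_{\P(\Si)}R_{row}(S')$, hence $\Rec(S)=\Rec(S')$ by Lemma~\ref{L:RectificationUnicity}. Write $t$ for this common super tableau and $\rho$ for its shape, so that $t\in\YoungT(\Si,\rho)$.

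Next, fix any super tableau $R$ of shape $\mu$, the common inner shape of $S$ and $S'$, to serve as recording tableau. Then $\Grw(R,S)=(t,R')$ and $\Grw(R,S')=(t,R'')$ for super skew tableaux $R',R''\in\SkewT(\Si,\lambda/\rho)$ encoding the rightmost columns of the two super growth diagrams, and the dual equivalence of $S$ and $S'$ forces $R'=R''$. Thus $\Grw(R,S)=\Grw(R,S')$ as elements of $\YoungT(\Si,\rho)\times\SkewT(\Si,\lambda/\rho)$. Finally, I would apply Lemma~\ref{L:GrowthDiagramSymmetry} twice, getting $\Grw(t,R')=(R,S)$ and $\Grw(t,R'')=(R,S')$; since $R'=R''$ the left-hand sides coincide, whence $(R,S)=(R,S')$ and therefore $S=S'$. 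In short, the two hypotheses together say that $(R,S)$ and $(R,S')$ have the same image under $\Grw$, and Lemma~\ref{L:GrowthDiagramSymmetry} says $\Grw$ is an involution on the relevant part of its domain, so it may be cancelled.

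I do not expect a genuine obstacle in this lemma itself: the substantive input — the symmetry of Fomin's local rule, namely that a super growth diagram is recovered from its bottom-most row and rightmost column exactly as from its topmost row and leftmost column — is already available as Lemma~\ref{L:GrowthDiagramSymmetry}. What remains is bookkeeping: checking that $S$ and $S'$ share the outer shape $\lambda$ (given) and, via the first step, the rectification shape $\rho$, so that every application of $\Grw$ above lands in its domain, and that a recording super tableau $R$ of shape $\mu$ over $\Si$ is at hand. It is worth noting that a single choice of $R$ suffices for the argument, even though dual equivalence supplies $R'=R''$ for every admissible $R$.
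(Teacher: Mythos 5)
Your proof is correct and follows essentially the same route as the paper: use jeu de taquin equivalence to get $\Rec(S)=\Rec(S')$, use dual equivalence to get $R'=R''$ for a chosen recording tableau $R$ of shape $\mu$, and then cancel $\Grw$ via the symmetry Lemma~\ref{L:GrowthDiagramSymmetry} to conclude $(R,S)=(R,S')$. The only cosmetic difference is that you obtain the common rectification through Lemmata~\ref{L:KnuthequivalenceJeudeTaquin} and~\ref{L:RectificationUnicity}, whereas the paper cites Theorem~\ref{T:ConfluenceJeuDeTaquin}, which rests on the same facts.
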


\begin{proof}
Since~$S$ and~$S'$ are super  jeu de taquin equivalent, then the super tableaux~$\Rec(S)$ and $\Rec(S')$ are equal by Theorem~\ref{T:ConfluenceJeuDeTaquin}.  Moreover, since~$S$ and~$S'$ are  dual equivalent, then for any super tableau~$R$ of shape~$\mu$ satisfying Property~\eqref{Eq:Dualequivalence}, the super skew tableaux~$R'$ and~$R''$ are equal. Finally,  following Lemma~\ref{L:GrowthDiagramSymmetry}, we have~$\Grw(\Rec(S), R')=(R,S)$  and $\Grw(\Rec(S'), R'')=(R,S')$, showing that~$S=S'$.
\end{proof}

\begin{proposition}
\label{P:SuperLittlewoodRichardsonBijection}
Let~$\mu$, $\nu$ and~$\lambda$ be partitions in~$\Pr$ such that~$\lambda/\mu$ is a skew shape. Fix a super tableau~$t$ of shape~$\nu$. Then there is a bijection 
\begin{equation}
\label{Eq:SuperLittlewoodRichardsonBijection}
\Psi: \SkewT(\Si,\lambda/\mu, \dashv t)\times \YoungT(\Si,\nu) \to \underset{t\in\YoungT(\Si,\nu)}{\bigcup }\SkewT(\Si,\lambda/\mu, \dashv t)
\end{equation}
that can be characterized by the conditions that~$\Psi(S,t')$ and~$S$ are dual equivalent and \linebreak $\Rec(\Psi(S,t'))= t'$, for all~$S$  in~$\SkewT(\Si,\lambda/\mu, \dashv t)$ and $t'$ in~$\YoungT(\Si,\nu)$.
\end{proposition}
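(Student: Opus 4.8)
The plan is to take the characterisation in the statement as the \emph{definition} of $\Psi$ and then to verify, in this order: that at most one super skew tableau has the two required properties, that at least one does, and that the resulting assignment is a bijection. For the uniqueness part, note first that dual equivalence is an equivalence relation on super skew tableaux of a fixed shape. If $\tilde S_{1}$ and $\tilde S_{2}$ are super skew tableaux of shape $\lambda/\mu$, both dual equivalent to $S$ and both satisfying $\Rec(\tilde S_{i})=t'$, then they are dual equivalent to one another, and since $\Rec(\tilde S_{1})=t'=\Rec(\tilde S_{2})$ they are also super jeu de taquin equivalent: each reduces to $t'$ by forward sliding operations, and by the reversibility of sliding operations recorded in Subsection~\ref{SS:ReverseSlidings} these two reductions concatenate into a chain of sliding operations from $\tilde S_{1}$ to $\tilde S_{2}$. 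Lemma~\ref{L:Dualequivalence} then gives $\tilde S_{1}=\tilde S_{2}$; in particular the stated conditions determine $\Psi$ uniquely, so it remains to prove existence and bijectivity.

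For existence I would use the super growth diagram machinery. We may assume $\SkewT(\Si,\lambda/\mu,\dashv t)\neq\emptyset$, since otherwise Proposition~\ref{P:LittlewoodRichardsonRUleCoefficients} makes $\SkewT(\Si,\lambda/\mu,\dashv t')$ empty for every $t'$ of shape $\nu$ and $\Psi$ is a map between empty sets. Fix $S\in\SkewT(\Si,\lambda/\mu,\dashv t)$ and a recording tableau $R$ of shape $\mu$, and realise the super jeu de taquin rectifying $S$ as a super growth diagram whose topmost row is $S$, leftmost column is $R$, bottom-most row is $\Rec(S)=t$, and rightmost column is a super skew tableau $R'$ of shape $\lambda/\nu$; thus $\Grw(R,S)=(t,R')$. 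By Lemma~\ref{L:GrowthDiagramSymmetry} (which says $\Grw$ is an involution on its domain), $\Grw(t,R')=(R,S)$, whence $\Rec(R')=R$, a super tableau of shape $\mu$. Consequently $\Grw(t',R')=(\Rec(R'),\tilde S)=(R,\tilde S)$ for a super skew tableau $\tilde S$ of shape $\lambda/\mu$, and applying Lemma~\ref{L:GrowthDiagramSymmetry} once more yields $\Grw(R,\tilde S)=\Grw(\Grw(t',R'))=(t',R')$, so that $\Rec(\tilde S)=t'$. I then set $\Psi(S,t'):=\tilde S$.

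The main obstacle is to check that this $\tilde S$ is dual equivalent to $S$. What the construction gives directly is that $S$ and $\tilde S$ produce the same recording skew tableau $R'$ relative to the single recording tableau $R$; dual equivalence demands the same conclusion relative to \emph{every} super tableau of shape $\mu$ used for recording. Following~\cite{Leeuwen01}, I expect to bridge this gap using the symmetry of Fomin's local rule (Subsection~\ref{SSS:GrowthDiagramLocalRule}): any two recording tableaux of shape $\mu$ differ by a sequence of elementary moves that interchange two consecutive slides whose inner corners share neither a row nor a column, and such a move alters a super growth diagram only inside one $2\times2$ square, where the symmetric local rule — equivalently Lemma~\ref{L:GrowthDiagramSymmetry} — shows that the outer‑shape evolution, i.e.\ the recording skew tableau along the right edge, is unchanged. (One could also phrase this inductively, peeling off one inner corner of $\lambda/\mu$ at a time.) This local/combinatorial propagation is the technical heart of the argument.

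Finally, bijectivity is formal once existence is available. By the same construction with the roles of $t$ and $t'$ interchanged, every $S''\in\SkewT(\Si,\lambda/\mu,\dashv t')$ has a necessarily unique companion $\tilde S_{0}$ of shape $\lambda/\mu$ that is dual equivalent to $S''$ with $\Rec(\tilde S_{0})=t$; the assignment $S''\mapsto(\tilde S_{0},\Rec(S''))$ is a two‑sided inverse of $\Psi$. Indeed $\Psi(\tilde S_{0},\Rec(S''))$ is the unique super skew tableau of shape $\lambda/\mu$ dual equivalent to $\tilde S_{0}$ and rectifying to $\Rec(S'')$, while $S''$ has both of these properties (using symmetry of dual equivalence), so $\Psi(\tilde S_{0},\Rec(S''))=S''$; the other composite is the identity for the same reason. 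Hence $\Psi$ is a bijection characterised as stated, and one recovers in passing the count $c^{\nu}_{\lambda,\mu}\cdot\#\YoungT(\Si,\nu)=\#\bigcup_{t'}\SkewT(\Si,\lambda/\mu,\dashv t')$.
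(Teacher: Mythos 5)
Your construction of $\Psi$ coincides with the paper's (your $R$, $R'$, $\tilde S$ are the paper's $t''$, $S'$, $S''$), your uniqueness argument via Lemma~\ref{L:Dualequivalence} matches the paper's, and your inverse map is the paper's reconstruction step. Where you depart from the paper is the verification that $\Psi(S,t')$ is dual equivalent to $S$, and that is precisely where your proposal has a genuine gap: you observe that the construction only yields agreement of the recording skew tableaux for the single recording tableau $R$, and you then merely sketch (``I expect to bridge this gap\ldots'') how to pass to all recording tableaux of shape $\mu$, so the characterizing property of $\Psi$ is never actually established in your write-up. The paper takes no such detour: since $t''$ is an arbitrary super tableau of shape $\mu$, it reads off the dual equivalence directly from the two computations $\Grw(t'',S)=(t,S')$ and $\Grw(t'',S'')=(t',S')$, and elementary moves on recording tableaux never enter its argument.

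Moreover, the bridge you sketch would fail as stated. You claim that interchanging two consecutive slides whose inner corners share no row or column alters the super growth diagram only inside one $2\times 2$ square and leaves ``the recording skew tableau along the right edge'' unchanged. The latter cannot happen when the recording tableau genuinely changes: by Lemma~\ref{L:GrowthDiagramSymmetry}, the bottom row together with the rightmost column determines the leftmost column (applying $\Grw$ to that pair returns it), so two diagrams with the same top row $S$, the same bottom row $\Rec(S)$ and the same right edge would have the same recording tableau; hence distinct recording tableaux force distinct right edges. What you actually need is the weaker statement that the right edges of the two diagrams --- the one built on $S$ and the one built on $\Psi(S,t')$ --- remain equal \emph{to each other} after the move, and your sketch says nothing about how to compare the two altered diagrams. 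In addition, the claim that only one $2\times 2$ square is affected presupposes that forward slidings at two incomparable inner corners commute at the level of the intermediate super skew tableau (otherwise the change propagates along the entire intermediate row); this commutation is proved neither in the paper nor in your proposal. So the dual-equivalence half of the characterization of $\Psi$, which you yourself identify as the technical heart, remains unproved, while the bijectivity and counting consequences you draw are sound only modulo that missing step.
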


\begin{proof}
Consider~$t$ in~$\YoungT(\Si,\nu)$ and let~$S$ be in~$\SkewT(\Si,\lambda/\mu, \dashv t)$ and $t'$  in~$\YoungT(\Si,\nu)$. We will construct~$\Psi(S,t')$ which is uniquely determined by Lemma~\ref{L:Dualequivalence} as follows.
First, choose~$t''$ to be any super tableau in~$\YoungT(\Si,\mu)$, then we have~$\Grw(t'', S) = (t,S')$, for some~$S'$ in~$\SkewT(\Si,\lambda/\nu)$.
Moreover, since~$\Rec(S')=t''$, the following equality~$\Grw(t', S') = (t'', S'')$ holds by Theorem~\ref{T:ConfluenceJeuDeTaquin}, with $S''$ in~$\SkewT(\Si,\lambda/\mu, \dashv t')$. We set~$\Psi(S,t')=S''$. On one hand, we have~$\Rec(\Psi(S,t'))=t'$. On the second hand, since~$\Grw(t'', S) = (t,S')$ and $ \Grw(t'', S'')=(t', S') $ with~$t,t'\in \YoungT(\Si,\nu)$, we deduce  that~$S$ and~$S''$ are dual equivalent. Finally, since the super tableaux~$S$,~$S'$ and~$t'$ can be reconstructed from~$S''$,~$t''$ and~$t$, we deduce that the map~$\Psi$ is bijective.
\end{proof}

\begin{small}
\renewcommand{\refname}{\Large\textsc{References}}
\bibliographystyle{plain}
\bibliography{biblioCURRENT}

\def\cprime{$'$}
\begin{thebibliography}{10}

\bibitem{BenkartKangKashiwara20}
Georgia Benkart, Seok-Jin Kang, and Masaki Kashiwara.
\newblock Crystal bases for the quantum superalgebra
  $\textsc{U}_q(\mathfrak{gl}(m,n))$.
\newblock {\em Journal of the American Mathematical Society}, 13(2):295--331,
  2000.

\bibitem{BereleRegev87}
A~Berele and A~Regev.
\newblock Hook {Y}oung diagrams with applications to combinatorics and to
  representations of {L}ie superalgebras.
\newblock {\em Advances in Mathematics}, 64(2):118--175, 1987.

\bibitem{BereleRemmel85}
A.~Berele and J.B. Remmel.
\newblock Hook flag characters and their combinatorics.
\newblock {\em Journal of Pure and Applied Algebra}, 35:225--245, 1985.

\bibitem{BonettiSenatoVenezia98}
F.~Bonetti, D.~Senato, and A.~Venezia.
\newblock The \textsc{R}obinson--\textsc{S}chensted correspondence for the
  fourfold algebra.
\newblock {\em Boll. Un. Mat. Ital. B (7)}, 2(3):541--554, 1988.

\bibitem{CainGrayMalheiro15b}
Alan~J. Cain, Robert~D. Gray, and Ant{\'o}nio Malheiro.
\newblock Rewriting systems and biautomatic structures for {C}hinese,
  hypoplactic, and {S}ylvester monoids.
\newblock {\em Internat. J. Algebra Comput.}, 25(1-2):51--80, 2015.

\bibitem{CainGrayMalheiro14}
Alan~J. Cain, Robert~D. Gray, and Ant\'{o}nio Malheiro.
\newblock Crystal monoids \& crystal bases: rewriting systems and biautomatic
  structures for plactic monoids of types {$A_n$}, {$B_n$}, {$C_n$}, {$D_n$},
  and {$G_2$}.
\newblock {\em J. Combin. Theory Ser. A}, 162:406--466, 2019.

\bibitem{CedoOkninski04}
Ferran Cedó and Jan Okniński.
\newblock Plactic algebras.
\newblock {\em Journal of Algebra}, 274(1):97 -- 117, 2004.

\bibitem{OConnell03}
Neil~O Connell.
\newblock Conditioned random walks and the {RSK} correspondence.
\newblock {\em Journal of Physics A: Mathematical and General},
  36(12):3049--3066, 2003.

\bibitem{Fomin99}
Sergey Fomin.
\newblock Knuth equivalence, jeu de taquin, and the {L}ittlewood-{R}ichardson
  rule, appendix 1 in enumerative combinatorics 2 (1999).

\bibitem{Fulton97}
William Fulton.
\newblock {\em Young tableaux}, volume~35 of {\em London Mathematical Society
  Student Texts}.
\newblock Cambridge University Press, Cambridge, 1997.
\newblock With applications to representation theory and geometry.

\bibitem{Greene74}
Curtis Greene.
\newblock An extension of {S}chensted's theorem.
\newblock {\em Advances in Mathematics}, 14(2):254--265, 1974.

\bibitem{GrosshansRotaStein87}
Frank~D Grosshans, Gian-Carlo Rota, and Joel~A Stein.
\newblock {\em Invariant theory and superalgebras}.
\newblock Number~69. American Mathematical Soc., 1987.

\bibitem{Hage15}
Nohra Hage.
\newblock Finite convergent presentation of plactic monoid for type {C}.
\newblock {\em Internat. J. Algebra Comput.}, 25(8):1239--1263, 2015.

\bibitem{HageMalbos17}
Nohra Hage and Philippe Malbos.
\newblock Knuth's coherent presentations of plactic monoids of type {A}.
\newblock {\em Algebr. Represent. Theory}, 20(5):1259--1288, 2017.

\bibitem{HageMalbos22}
Nohra Hage and Philippe Malbos.
\newblock {C}hinese syzygies by insertions.
\newblock {\em Semigroup Forum}, 104(1):88--108, 2022.

\bibitem{JohnsonKambites21}
Marianne Johnson and Mark Kambites.
\newblock Tropical representations and identities of plactic monoids.
\newblock {\em Transactions of the American Mathematical Society},
  374(6):4423--4447, 2021.

\bibitem{Knuth70}
Donald~E. Knuth.
\newblock Permutations, matrices, and generalized {Y}oung tableaux.
\newblock {\em Pacific J. Math.}, 34:709--727, 1970.

\bibitem{LaScalaNardozzaSenato06}
Roberto {La Scala}, Vincenzo {Nardozza}, and Domenico {Senato}.
\newblock {Super RSK-algorithms and super plactic monoid.}
\newblock {\em {Int. J. Algebra Comput.}}, 16(2):377--396, 2006.

\bibitem{LascouxLeclercThibon95}
Alain Lascoux, Bernard Leclerc, and Jean-Yves Thibon.
\newblock Crystal graphs and {$q$}-analogues of weight multiplicities for the
  root system {$A_n$}.
\newblock {\em Lett. Math. Phys.}, 35(4):359--374, 1995.

\bibitem{LascouxSchutsenberger81}
Alain Lascoux and Marcel-P. Sch{\"u}tzenberger.
\newblock Le mono\"\i de plaxique.
\newblock In {\em Noncommutative structures in algebra and geometric
  combinatorics ({N}aples, 1978)}, volume 109 of {\em Quad. ``Ricerca Sci.''},
  pages 129--156. CNR, Rome, 1981.

\bibitem{Lecouvey07}
C{\'e}dric Lecouvey.
\newblock Combinatorics of crystal graphs for the root systems of types
  {$A_n,B_n,C_n,D_n$} and {$G_2$}.
\newblock In {\em Combinatorial aspect of integrable systems}, volume~17 of
  {\em MSJ Mem.}, pages 11--41. Math. Soc. Japan, Tokyo, 2007.

\bibitem{Littelmann96}
Peter Littelmann.
\newblock A plactic algebra for semisimple {L}ie algebras.
\newblock {\em Adv. Math.}, 124(2):312--331, 1996.

\bibitem{LodayPovov08}
Jean-Louis Loday and Todor Popov.
\newblock Parastatistics algebra, {Y}oung tableaux and the super plactic
  monoid.
\newblock {\em International Journal of Geometric Methods in Modern Physics},
  05(08):1295--1314, 2008.

\bibitem{Lothaire02}
M.~Lothaire.
\newblock {\em Algebraic combinatorics on words}, volume~90 of {\em
  Encyclopedia of Mathematics and its Applications}.
\newblock Cambridge University Press, Cambridge, 2002.

\bibitem{RomikSniady15}
Dan Romik and Piotr Śniady.
\newblock Jeu de taquin dynamics on infinite {Y}oung tableaux and second class
  particles.
\newblock {\em The Annals of Probability}, 43(2):682--737, 2015.

\bibitem{Schensted61}
C.~Schensted.
\newblock Longest increasing and decreasing subsequences.
\newblock {\em Canad. J. Math.}, 13:179--191, 1961.

\bibitem{Schutzenberger63}
M.-P. Schützenberger.
\newblock Quelques remarques sur une construction de {S}chensted.
\newblock {\em MATHEMATICA SCANDINAVICA}, 12:117--128, 1963.

\bibitem{Schutzenberger77}
M.-P. Sch\"{u}tzenberger.
\newblock La correspondance de {R}obinson.
\newblock pages 59--113. Lecture Notes in Math., Vol. 579, 1977.

\bibitem{Leeuwen01}
Marc A.~A. van Leeuwen.
\newblock The {L}ittlewood-{R}ichardson rule, and related combinatorics.
\newblock In {\em Interaction of combinatorics and representation theory},
  volume~11 of {\em MSJ Mem.}, pages 95--145. Math. Soc. Japan, Tokyo, 2001.

\bibitem{Young28}
Alfred Young.
\newblock On {Q}uantitative {S}ubstitutional {A}nalysis.
\newblock {\em Proc. London Math. Soc. (2)}, 28(4):255--292, 1928.

\end{thebibliography}
\end{small}

\quad

\vfill

\begin{flushright}
\begin{small}
\noindent \textsc{Nohra Hage} \\
\url{nohra.hage@univ-catholille.fr} \\
Faculté de Gestion, Economie \& Sciences (FGES),\\
Université Catholique de Lille,\\
60 bd Vauban, \\
CS 40109, 59016 Lille Cedex, France\\

\end{small}
\end{flushright}

\vspace{0.25cm}

\begin{small}---\;\;\today\;\;-\;\;\hhmm\;\;---\end{small} \hfill
\end{document}